\definecolor{edits}{rgb}{0,0,0}
\newcommand{\proxyx}[2]{\tilde{x}_{#1}^{#2}}
\newcommand{\dual}[2]{\mu_{#1}^{#2}}
\newcommand{\auxa}[2]{a_{#1}^{#2}}
\newcommand{\staget}[1]{\mathcal{S}_t}
\newcommand{\nonstationaryalg}{\textsc{NS-Proxy-Dual-GD}}
\crefname{assumption}{assumption}{assumptions}
\crefname{algocf}{algorithm}{algorithm}
\mathchardef\mhyphen="2D 
\DeclareMathOperator*{\argmax}{arg\,min}
\DeclareMathOperator*{\argmax}{arg\,max}
\let\originalleft\left
\let\originalright\right
\renewcommand{\left}{\mathopen{}\mathclose\bgroup\originalleft}
\renewcommand{\right}{\aftergroup\egroup\originalright}
\begin{document}


\RUNAUTHOR{Hssaine, Topaloglu and van Ryzin}

\RUNTITLE{Online Allocation of Throughput-Constrained Resources}

\TITLE{Online Allocation of Throughput-Constrained Resources Using Proxy Assignments}

\ARTICLEAUTHORS{%

\AUTHOR{Chamsi Hssaine}

\AFF{Department of Data Sciences and Operations,
University of Southern California, Marshall School of Business\\ Los Angeles, CA 90089, \EMAIL{hssaine@usc.edu}}

\AUTHOR{Huseyin Topaloglu}
\AFF{School of Operations Research and Information Engineering, Cornell Tech \\
New York, NY 10044, \EMAIL{ht88@cornell.edu}}

\AUTHOR{Garrett van Ryzin}
\AFF{Amazon\\ New York, NY 10001 \EMAIL{ryzing@amazon.com}}
}

\ABSTRACT{%
{\textbf{Problem definition:} We study a {variation of the canonical online resource allocation problem in which resources are {\it throughput}, rather than {\it budget}, constrained}. As in {the classical setting}, the decision-maker must assign sequentially arriving jobs to one of multiple available resources. However, in addition to the assignment costs incurred from these decisions, the decision-maker is also penalized for deviating from exogenous, {time-varying target assignment rates for each resource}{, which represent the resources' respective throughput capacities throughout the horizon}. The goal is to minimize the total expected assignment and deviation penalty costs incurred throughout the horizon when the distribution of assignment costs is unknown. \textbf{Methodology/results:} We first show that naive extensions of state-of-the-art algorithms for classical budget-constrained resource allocation problems can fail dramatically when applied to {throughput-constrained} resource allocation. We then propose a novel ``proxy assignment" primal-dual algorithm that uses current arrivals to simulate the effect of future arrivals. We prove that our algorithm achieves the optimal $O(\sqrt{T})$ regret bound when the assignment costs of the arriving jobs are drawn i.i.d. from a fixed distribution. We demonstrate the practical performance of our approach by conducting numerical experiments on synthetic datasets, as well as real-world datasets from retail fulfillment operations. \textbf{Managerial implications:} Our framework applies to many practical settings where resource capacity is {throughput}-based and potentially time-varying, such as assigning jobs to a pool of workers where staffing levels change over time, or assigning shipments to trucks whose departure times are scheduled throughout the day. In contrast to traditional online resource allocation, the financial costs of assignments must be balanced against the cost of deviating from target consumption {rates}. Our work provides decision-makers with the tools to efficiently manage this trade-off.}
}%




\KEYWORDS{Online resource allocation, target following, proxy assignments, {throughput} constraints, time-varying constraints}

\maketitle

	\section{Introduction}

Classical online resource allocation problems model settings in which a decision-maker needs to allocate a limited resource to a sequence of incoming arrivals over a finite horizon. The main challenge in such settings is uncertainty about future arrivals; should the decision-maker allocate a resource to an arrival today and collect a known {reward}, or should she reserve the resource for a future arrival with unknown but potentially high {reward}? While these classical models have found great success in applications such as airline seat allocation and allocating budgets in online advertising, they assume that the decision-maker's objective (most frequently expected reward maximization) is invariant to the {\it timing} of resource consumption. {Budget} constraints in these settings are simple aggregate consumption constraints; that is, only the {\it total} consumption matters, not the trajectory of consumption {\it through time}.
This assumption {also holds in} more recent works that have considered non-separable objectives such as fairness and load balancing, in which the decision-maker evaluates the quality of the total consumption of resources along other dimensions throughout the entire horizon \citep{agrawal2014fast,balseiro2021regularized,chen2022network}.

In this paper we focus on settings in which the trajectory of consumption is a first-order consideration for the decision-maker. This is true in many physical processing settings, where the {resource being allocated is {\it throughput}, rather than {\it budget}, constrained}.
For example, in online retail fulfillment operations, physical processes such as order picking and packing are a mix of manual and automated flow processes that have {throughput} constraints {specifying the maximum rate at which orders can be processed per hour; these constraints are} imposed by mechanical capacities and labor staffing levels. Moreover, these {throughput} capacities may vary throughout the work day, e.g., due to varying staffing levels. Systems controlling the assignment of {the flow of} work need to trade off between assigning enough work to avoid idleness, and assigning too much work, which results in backlogs and congestion. {This lies in contrast to budget-constrained settings, where there is no penalty for not assigning work to a resource for a significant part of the horizon while over-assignment to resources is not allowed.} Another example where the {trajectory} of {consumption} is critical is outbound transportation, where packages are loaded onto trucks, either for transfer to downstream facilities in the network or to customers' doorsteps. Truck departures vary throughout the day based on transportation scheduling decisions, and each departure has a strict cutoff time by which the truck needs to be filled. Assigning too few packages risks trucks departing underutilized, which wastes costly trucking capacity, while assigning too many packages risks exceeding truck capacity, causing loading dock backlogs and potential delivery promise misses.

In practice, {\it target allocations} are a common way to manage such {throughput} capacity allocation decisions.\footnote{The practices described here are representative of how Amazon manages many of its supply chain operations, and the results presented in this paper are based on the authors' joint collaboration while at Amazon.} 
These exogenous consumption trajectories, typically computed by an aggregate flow planning system,  are fed into execution systems at the beginning of each day to provide high-level guidance on the rate at which work should be assigned to each resource throughout the day. 
In the context of shipment processing and handling, for instance, targets are specified hourly, and represent the running average fraction of shipments to be processed by manual labor or machines. While actual assignment decisions need not strictly adhere to these target allocations, target adherence (also referred to as {\it target following}) is an important metric against which these decisions are benchmarked. While perfect target adherence can trivially be achieved by allocating arrivals proportionally to their targets at all times, this may come at a high assignment cost, since certain resources may be better suited than others for processing a given arrival. This creates an inherent tension between maintaining adherence to the consumption targets and assigning each arrival to its preferred resource. The goal of this work is to propose {simple and practical} solutions to this challenging task, which we call the {\it throughput-constrained online resource allocation problem}.

\subsection{Main Contributions}

We consider a model in which a decision-maker faces a sequence of arrivals seeking assignment to a set of resources over a discrete horizon of length $T$. Each arrival has an observable type defined by a vector of assignment costs associated with each resource, drawn i.i.d. from an unknown distribution. The horizon is partitioned into $K$ intervals of equal length (referred to as {\it epochs}) over which each resource has an exogenously given {target assignment rate representing its ideal throughput}.\footnote{In practice, these targets {are typically} derived by discretizing a continuous consumption trajectory from an upstream planning system.} In addition to the cost of assigning each arrival to a resource, at the end of each epoch the decision-maker incurs a cost that penalizes the deviation between the realized cumulative consumption of each resource and its respective target consumption. This cost{, a generic proxy for the costs incurred from backlogs, idling, overtime, etc.,} is specified as a known, convex function which we call the {\it deviation cost}. The goal of the decision-maker is to minimize the sum of cumulative assignment and deviation costs throughout the horizon.

When there is a single epoch, this problem is a special case of the recently studied regularized online allocation problem, for which \citet{balseiro2021regularized} designed a sublinear regret algorithm relative to the hindsight optimal solution that has access to the entire sequence of arrivals at the beginning of the horizon. For multiple epochs, however, we show that this problem is qualitatively different. Indeed, consider two natural attempts at extending optimal algorithms for the single-epoch setting: (i) a myopic algorithm which treats each individual epoch as a single-epoch problem with a shorter horizon, and (ii) an algorithm that leverages Lagrangian duality, similar to the optimal single-epoch algorithm. We show both of these naive extensions incur linear regret with respect to the hindsight optimal solution in the worst case (Propositions \ref{prop:myopic-is-bad} and \ref{prop:bad-example}). Importantly, the bad instances we construct to prove these results highlight the challenges specific to multi-epoch target following: On the one hand, good algorithms need to take into account all future epochs in their current decisions. But doing this well with an online algorithm is challenging since the decision-maker, in a sense, needs to simultaneously solve for multiple single-epoch allocation problems over horizons that have not yet been observed.

Our main technical contribution is a primal-dual algorithm (\Cref{alg:multi-stages}) that overcomes these obstacles and achieves the optimal $O(\sqrt{T})$ regret guarantee in expectation (\Cref{thm:regret}). The algorithm uses the novel notion of {\em proxy assignments}; that is, rather than just solving a single assignment cost minimization problem for the arrival at hand, our algorithm solves cost minimization problems for the current and each future epoch using this same arrival as a proxy. The current allocation is executed, but the decisions made for future epochs are only used as a proxy for the real decisions to be made in the future. In this way, we conduct parallel online simulations of what the assignment decisions will be in future epochs, given the current arrival and a dual variable that reflects the deviation between actual (or simulated) resource consumption and target resource consumption for the respective epoch. These simulated assignment decisions --- which, importantly, are never implemented --- effectively decouple decision-making across epochs, all the while keeping track of epoch-specific consumption. The epochs are then ``re-coupled'' in a deviation cost minimization step, which takes as input the dual variables for all epochs and resources. This slightly unintuitive construction avoids the pitfalls of naive extensions; it is forward-looking, but incorporates future targets in a realistic enough way to create good decisions in the current epoch. The proof of the algorithm's performance guarantee relies on defining a hindsight optimum for each epoch, and showing that our algorithm performs well in each epoch relative to its respective hindsight optimum. It then bounds the loss incurred from the assumption in the proxy assignment stage that the arrivals in future epochs are identical to those seen in the current decision-making epoch.

We complement our theoretical results with numerical experiments on both synthetic and real datasets from online retail operations. In our real dataset, we focus on the online control problem of {\it case breaking} in a warehouse.
 {
In particular, when a shipment (or {\it case}) of products arrives at an inbound node (also referred to as a {\it cross-dock}), it must be sorted and prepared for transport to downstream fulfillment centers. The preparation process includes potentially ``breaking" the case --- that is, opening up the case and dividing the contents into units to be sorted and subsequently sent to different downstream nodes. Case-breaking can be performed by a number of resources: either manually, by workers, or by automated robotic machinery, both of which are costly. The {value} of breaking a given case {depends} on {factors such as} the sales velocity of the product and {its} current inventory position at downstream fulfillment centers. {Moreover}, {certain} resources are better-suited to handle certain types of shipments than others; for instance, some machines are designed to handle heavier cases (e.g., those exceeding 20 pounds). Finally, cases may not be broken at all if the costs outweigh the benefits, or if there is insufficient case-break capacity. 

Given the staffing levels of manual case-breaking lines and the physical limits of automated lines, there are {throughput} constraints on the amount of case-breaking that each resource can perform. These {constraints} can vary throughout the day as well; for instance, the number of workers assigned to case-breaking operations may {fluctuate across shifts}, while automated machines have limited throughput rates and conveyance capacity. While exceeding {throughput capacity} results in costly backlogs, under-utilizing {it} causes wasteful idleness. To {manage this trade-off}, warehouse managers regulate the assignment of cases to resources throughout the day by specifying {\it hourly targets} for each resource { --- that is, benchmarks specifying the fraction of shipments that each resource should have broken by the end of each hour}. These targets are {\it cumulative}, reflecting the fact that assignments can have a long-lasting impact throughout the rest of the day (via, e.g., backlogs), as described above. {Finally, mirroring the time-varying nature of the throughput constraints, the targets observed in the real case-breaking data used for our computational experiments exhibit a high degree of nonstationarity (see \Cref{fig:hourly-targets}).}

Operations managers evaluate the efficiency of case-breaking decisions at the end of each day according to two key metrics: (i) the financial costs associated with case-breaking (or lack thereof), and (ii) the deviation from hourly targets. Assigning shipments greedily to the least-costly resource will incur small financial costs, but may result in costly over- or under-utilization of resources. Conversely, strictly adhering to the hourly targets throughout the day may result in significant (and potentially unnecessary) financial costs of assignment. Operations managers seek to efficiently trade off between these two metrics.
}

We compare the performance of our algorithm to two myopic extensions of state-of-the-art algorithms for the single-epoch setting. Across all sets of experiments, our algorithm strongly outperforms these benchmarks, demonstrating the practical importance of jointly optimizing over all epochs. Our experiments moreover provide insight into (i) the trade-off between assignment and deviation costs, and (ii) the impact of target nonstationarity on policy performance.

\subsubsection*{Paper organization.} In the rest of this section, we review the related literature. Our model is defined in \Cref{sec:model}, and we show the inefficacy of naive target-following algorithms in \Cref{sec:challenges}. We propose and analyze our proxy assignment algorithm in \Cref{sec:main-result}. We test our algorithm's performance in computational experiments in \Cref{sec:exper}. Conclusions are provided in \Cref{sec:conclusion}.

\subsection{Related Literature}\label{sec:related-work}

Our work contributes to the extensive literature on online resource allocation, for which the vast majority of papers consider {\it time-separable} objectives (see \citet{balseiro2023survey} for an excellent survey). More recently, however, a small body of work has studied online allocation in settings where the objective is {\it non-separable} and couples allocation decisions throughout the horizon. The non-separable function that acts on the consumption of resources across time is typically referred to as a {\it regularizer}, and has been used to encode fairness considerations, minimum thresholds on the allocation of resources, and load balancing objectives, amongst others. We refer to this problem as the {\it regularized online allocation problem}. 

\smallskip
\noindent\textbf{Regularized online allocation under incomplete information.} The problem we consider is most closely related to existing works on regularized online allocation when requests arrive according to a fixed but unknown distribution. This line of work was first introduced by \citet{agrawal2014fast}, who consider a general online stochastic convex programming problem with concave objective and convex constraints. In their setting, the objective function is revealed to the decision-maker in the beginning of the first round, and takes as argument the average allocation throughout the horizon. They derive a primal-dual algorithm that achieves $O(\sqrt{T})$ regret with respect to the offline benchmark, for both the random permutation model --- in which the sequence of arrivals is fixed in advance and comes in a uniformly random order --- and the stochastic i.i.d. model, which corresponds to the arrival model we consider in this work. Motivated by online ad allocation, \citet{balseiro2021regularized} later consider a model in which the budgeted decision-maker observes the reward and resource consumption function of each arrival at the beginning of each period; additionally, the decision-maker observes a known, concave regularizer at the beginning of the horizon that acts on the average consumption of resources across time. Leveraging the dual mirror descent framework of \citet{balseiro2023best}, they design a primal-dual algorithm that similarly achieves $O(\sqrt{T})$ regret relative to the optimal hindsight allocation in the stochastic i.i.d. setting; for the adversarial setting, they show that this same algorithm asymptotically guarantees at least $1/\alpha$ fraction of the hindsight optimum, for some constant $\alpha \geq 1$, under mild conditions on the regularizer. \citet{ma2022optimal} later show that $O(\log T)$ is achievable for this problem in the stochastic i.i.d. setting when the reward function satisfies a locally second-order growth condition. \citet{chen2022network} apply these ideas to the problem of price-based network revenue management under demand uncertainty, when the decision-maker must also regularize consumption throughout the horizon. Also related is \citet{chen2015dynamic}, who study the problem of online matching with concave returns in an adversarial setting. 

The deviation cost functions in our setting are examples of regularizers, as they act on the average consumption of each resource in each epoch. However, our paper differs from the above works in that we consider a {\it multi-epoch} regularized allocation problem. To the best of our knowledge, we are the first to consider this practically relevant variant of the regularized online allocation problem in a stochastic setting.{\footnote{Within the adversarial framework, \cite{bhalgat2012online} study a thematically related ad allocation problem with time-varying upper bounds on the number of impressions each advertiser receives by the end of pre-specified time intervals.}} Though we leverage some of the algorithmic developments of \citet{balseiro2021regularized} for our primal-dual algorithm, we show that incorporating {\it multiple} regularizers throughout the horizon, as opposed to a single one at the end of the horizon, requires the novel technique of proxy assignments, as described in \cref{sec:challenges}.

\smallskip

\noindent\textbf{Regularized online allocation under complete information.} Settings in which the decision-maker knows the distribution from which requests are drawn have also been studied in recent years. For instance, motivated by the fact that load balancing is a costly endeavor in retail and computing operations, \citet{freund2023end} study unconstrained end-of-horizon load balancing problems through the lens of the power-of-$d$-choices framework, designing lightweight algorithms that effectively trade off between the cost of load balancing (similar to an assignment cost in our setup) and that of having an imbalanced load across resources at the end of the horizon. \citet{li2024online} considers the online ad delivery problem with customer choice, when the platform seeks to balance clicks across advertisers.  \citet{freund2025fair} study the problem of allocating rewards to individuals in order to maximize retention, subject to long-run average distributional fairness constraints. In a similar vein, \citet{iyengar2024online} study the problem of designing policies that match gig economy workers to jobs, under long-run average minimum allocation guarantees.  
In contrast to the setting we consider, the regularization considered in these latter papers couples {\it all} decisions throughout the horizon, as opposed to auditing fairness across time. \citet{sinclair2023sequential}, \citet{banerjee2023online}, and \cite{onyeze2025sequential} situate themselves at the other extreme, studying fairness-efficiency trade-offs when the decision-maker compares allocations across each pair of periods in the horizon. 

\smallskip

\noindent\textbf{Online convex optimization with long-term constraints.} This paper also relates at a high level to the line of work on online convex optimization with long-term constraints. Under this framework, the decision-maker seeks to maximize cumulative rewards throughout the horizon; however, rewards are revealed {\it after} the decision is made, rather than before, as in our model. Decisions are moreover coupled across time by a regularizer. The goal is to simultaneously achieve sublinear regret with respect to the best fixed decision in hindsight, as well as sublinear constraint violations \citep{mahdavi2012trading,jenatton2016adaptive,yu2020low}. Primal-dual algorithms have similarly been leveraged to great success within this framework. {Our problem also bears some similarities to linear quadratic regularization in the stochastic control literature, which studies settings in which both assignment and deviation cost functions are quadratic \citep{abbasi2011regret}. 
}

\smallskip

\noindent\textbf{Fair dynamic allocation.} Finally, as alluded to above, much of the motivation surrounding regularized online allocation stems from fairness considerations. The problem of fair dynamic allocation has grown in popularity in the operations literature over the past decade. Though philosophically related --- since fairness objectives are by definition regularized --- the models studied are less closely related to our work as they treat fairness as the single objective, without efficiency or revenue considerations \citep{lien2014sequential, ma2022group, bateni2022fair,balseiro2022uniformly, manshadi2023fair}. In our setting, sole target-tracking is easy, and can be solved at the start of the horizon. The main challenge is in efficiently trading off between the unknown costs of assignment and the costs incurred from target deviation in each epoch.

	\section{Problem Formulation}\label{sec:model}

{In this section we present our model of the {throughput-constrained} online resource allocation problem. A discussion of our modeling assumptions is provided at the end of this section.}

\smallskip 

\noindent\textbf{Notation.} In what follows, for $k \in \mathbb{N}$ we let $[k] = \{1,\ldots,k\}$. We use the notation $(\cdot)^+ = \max\{\cdot,0\}$ to denote the positive part of a real scalar, and let $\lfloor\cdot\rfloor$ and $\lceil\cdot\rceil$ respectively denote the floor and ceiling functions. Finally, we let $\emptyset$ be the empty set.

\smallskip 

\noindent\textbf{Arrival process.}
We consider a problem wherein a decision-maker makes a sequence of assignment decisions to a set of $m$ distinct resources throughout a finite horizon of length $T \in \mathbb{N}$. In each period $t \in [T]$, an arrival of type $j^t$ is drawn from a finite set of $n$ distinct types. Each type $j \in [n]$ is defined by a vector of assignment costs $c_j = (c_{j1},\ldots,c_{jm})\in\mathbb{R}^{m}$ and a set of resources $\mathcal{S}_j\subseteq [m]$ to which it can be assigned, where $c_{ji}$ is the cost of assigning an arrival of type $j$ to resource $i$. 
We assume each arrival consumes one unit of resource capacity (i.e., capacity is measured in number of arrivals that can be processed).\footnote{The assumption that an arrival consumes a single unit of capacity is for expositional simplicity. Entirely analogous arguments show that our algorithm retains its theoretical guarantees when arrivals consume multiple units of resource, as long as the consumption quantity is drawn i.i.d. from a distribution with bounded support and constant variance, in each period.}
If an arrival has not been assigned to any resource, we either say it has been rejected, or it has been assigned to the outside option, also denoted by the empty set. In this case, the decision-maker incurs a cost of 0. We assume assignment costs are bounded, with $c_{\max} = \max_{i,j}|c_{ji}| < \infty$.

We assume the arrival process is {stationary and independent}, denoting by $\omega = (j^1,\ldots,j^T)$ a sample path of arrivals throughout the horizon. Define $p_j = \mathbb{P}(j^t = j)$ for all $t$, with $\sum_{j \in [n]} p_j = 1$; we moreover assume that $p_j$ does not scale with $T$, and that $p = (p_1,\ldots,p_n)$ is {\it unknown} to the decision-maker. Finally, we let $\Lambda(t) = \left(\Lambda_1(t),\ldots,\Lambda_n(t)\right) \in \mathbb{N}^n$ denote the vector of arrivals of each type by period the end of $t$, with $\Lambda_j(t) = \sum_{\tau \leq t} \mathds{1}\{j^{\tau} = j\}$. For ease of notation we define $\Lambda_j(t_1:t_2) = \sum_{\tau \in (t_1,t_2]}\mathds{1}\{j^\tau = j\}$ for all $t_1 < t_2$.

\smallskip

\noindent\textbf{Deviation costs.}
The horizon is partitioned into $K \in \mathbb{N}$ epochs with $K = \Theta(1)$. Without loss of generality, we assume $T$ is a multiple of $K$; we moreover assume that there is an equal number of $T/K$ arrivals in each epoch. {(Stationarity in the number of arrivals per epoch is not critical; a simple time transformation to extend the model to non-stationary arrivals is provided in Appendix \ref{apx:nonstationary}.)} We use $\mathcal{T}_k = \left\{\frac{(k-1)T}{K}+1,...,\frac{kT}{K}\right\}$ to denote the periods in epoch $k \in [K]$. We moreover let \mbox{$\omega_k = (j^t, t \in \mathcal{T}_k)$} be the sequence of arrivals in epoch $k$.



\smallskip 

At the end of each epoch $k$, the decision-maker computes the running average number of assignments to each resource $i$, {from the start of the horizon up until the end of epoch $k$}, i.e., from period 1 to period $kT/K$. (We will alternatively refer to this running average number of assignments as the running average consumption, or the cumulative average consumption.) She then compares this running average consumption to a target consumption, denoted by $\rho_{ki} \in [0,1]$. For each period up until the end of epoch $k$, she incurs an amortized cost for deviating from $\rho_{ki}$. Let $g_{ki}: [0,1]\to\mathbb{R}_{\geq 0}$ denote the deviation cost function, and assume $g_{ki}$ is $L$-Lipschitz continuous and convex, with $g_{ki}(\rho_{ki}) = 0$. We provide examples below.

\smallskip 

\begin{example}
[No penalty] The deviation cost is trivially $g_{ki}(a) = 0$ for all $a \in [0,1]$. The decision-maker's problem reduces to an unconstrained cost minimization problem, for which the optimal solution is to assign each arrival to the resource for which the cost of assignment is the most negative (or to reject the arrival if all assignment costs are strictly positive). 
\end{example}

\smallskip

\begin{example}[Underage/overage penalty]\label{ex:overage} The deviation cost is defined as $\mbox{$g_{ki}(a)=\delta_{ki}^+(a-\rho_{ki})^++\delta_{ki}^-(\rho_{ki}-a)^+$}$ for all $a \in [0,1]$, $\delta_{ki}^+, \delta_{ki}^- \in \mathbb{R}_{\geq 0}$. For $\delta_{ki}^+ > 0$, this deviation cost models a type of soft budget constraint, e.g., the cost of overtime pay if labor is over-assigned work. For $\delta_{ki}^- > 0$, this deviation cost models a minimum consumption threshold for the resource, e.g., to avoid costly idling. Finally, a special case of this penalty function is when $\delta_{ki}^+ = \delta_{ki}^- > 0$, which reduces to the absolute deviation $g_{ki}(a) = \delta_{ki}^+|a-\rho_{ki}|$. In this case, there is a specific {throughput} level that the decision-maker seeks to adhere to in each epoch. This metric was important in the supply chain context that motivated our work, so we used it for our computational experiments in \cref{sec:exper}. 
\end{example}

\smallskip

\begin{example}[Squared deviation penalty] The deviation cost is defined as the squared deviation between the running average consumption of resource $i$ in epoch $k$ and the target $\rho_{ki}$, i.e., $\mbox{$g_{ki}(a) = \delta_{ki}(a-\rho_{ki})^2$}$ for all $a\in [0,1]$, $\delta_{ki} > 0$. As for the absolute deviation cost function, this deviation cost models settings in which the decision-maker has a specific {throughput rate} to which she wants to adhere. The squared deviation penalty, however, penalizes target deviation more severely. 
\end{example}

\smallskip

Given a running average consumption $a$ over the past $kT/K$ periods, the {\it total} deviation cost incurred by the decision-maker for resource $i$ is given by $\frac{kT}{K}\cdot g_{ki}(a)$. The motivation behind this scaling is as follows: since $\rho_{ki}$ represents a per-period rate of consumption up until the end of epoch $k$, one can associate each period $t \leq kT/K$ with a cost of deviating $g_{ki}(a)$, since each assignment decision in those periods contributed to this cost. Summing this amortized cost over all $t \leq kT/K$, we obtain our final scaling. Note that when $K = 1$, we recover the same scaling as in \citet{balseiro2021regularized} and \citet{agrawal2014fast}.

\smallskip 

\begin{remark}
Though the deviation cost incurred by the decision-maker is separable across resources, the targets themselves may depend on each other. For instance, we may have $\mbox{$\rho_{ki} = 1/m$}$ for all $i \in [m]$, representing that arrivals should be allocated fairly across resources. We may also have, for instance, \mbox{$\sum_{i=1}^m \rho_{ki} = 1$}, with $\rho_{ki}$ proportional to the capacity of resource $i$ relative to that of the other resources.
\end{remark}

\smallskip 

\begin{remark}
Our model allows for both positive and negative assignment costs. In a standard resource allocation setting, there would be no reason to assign an arrival of type $j$ to resource $i$ if $c_{ji} > 0$. This is because the assignment cost of the outside option is normalized to 0, and there is no penalty for target deviation. In our setting, however, this is no longer true. Consider for instance the squared deviation function $g_{ki}(a) = \delta_{ki}(a-\rho_{ki})^2$, where $\delta_{ki}$ is large and $\rho_{ki} > 0$. In this case, it may indeed be optimal to assign the arrival to resource $i$ in epoch $k$, despite a positive assignment cost.
\end{remark}

\smallskip

\noindent\textbf{Online control policy.} The decision-maker's goal is to minimize the sum of assignment and deviation costs incurred throughout the horizon. To formalize this, we use $\pi$ to denote an online policy, which takes as input the current period $t$, the type of the arrival, and the history of past arrivals and actions, and outputs a feasible assignment of the arrival to the set of resources. For $t\in [T]$, we use the random variable $Z^\pi_{ji}(t) \in \mathbb{N}$ to denote the number of type $j$ arrivals assigned to resource $i$ by the end of period $t$, with $\mbox{$Z^\pi(t) = (Z_{ji}^\pi(t), j\in[n], i\in[m])\in \mathbb{N}^{n\times m}$}$, and $Z^\pi_{ji}(0) = 0$. For ease of notation, for any vector of assignments $Z(t) \in \mathbb{N}^{n\times m}$, we define $Z_i(t) = \sum_{j\in[n]}Z_{ji}(t)$ to be the total number of arrivals assigned to resource $i$ by the end of period $t$. For any two periods $t_1 < t_2$, we use the shorthand $\mbox{$Z_{ji}(t_1:t_2) = Z_{ji}(t_2)-Z_{ji}(t_1)$}$ to refer to the number of type $j$ arrivals assigned to resource $i$ in the interval $(t_1,t_2]$, with $\mbox{$Z_i(t_1:t_2) = \sum_{j\in[n]}Z_{ji}(t_1:t_2)$}$.

The total cost incurred by $\pi$ throughout the horizon is:
\begin{align}\label{eq:alg-cost}
V^{\pi}[\omega] := \sum_{j \in [n]}\sum_{i\in[m]}c_{ji} Z_{ji}^{\pi}(T) + \frac{T}{K}\sum_{k \in [K]}\sum_{i\in[m]} kg_{ki}\left(\frac{Z_i^{\pi}(kT/K)}{kT/K}\right)
\end{align}

\smallskip

\noindent\textbf{Offline benchmark.} We compare the performance of any online policy to the offline benchmark (also referred to as the {\it hindsight optimum}) which has full knowledge of $\omega$ upfront, and solves a convex program to compute the optimal assignment of arrivals to resources:
\begin{align}
V^{\textsc{off}}[\omega] := \min  &\sum_{j \in [n]}\sum_{i\in[m]}c_{ji} Z_{ji}(T) + \frac{T}{K}\sum_{k \in [K]}\sum_{i\in[m]} kg_{ki}\left(\frac{Z_i(kT/K)}{kT/K}\right) \notag  \\
\text{s.t.} & \sum_{i\in [m]} Z_{ji}\left(\frac{(k-1)T}{K}:\frac{kT}{K}\right) \leq \Lambda_j\left(\frac{(k-1)T}{K}:\frac{kT}{K}\right) \quad \forall \ j \in [n], k \in [K] \label{eq:feasible-assignment} \\
&Z_{ji}(T) = 0 \quad \forall  \ j \in [n], \ i \not\in \mathcal{S}_j \notag \\
&Z_{ji}(kT/K) \in \mathbb{N} \quad \forall \ j \in [n], i \in [m], k \in [K], \notag
\end{align}
where \eqref{eq:feasible-assignment} encodes the constraint that the number of assignments of type $j$ during epoch $k$ cannot exceed the number of type $j$ arrivals during that same epoch.
 
We measure the {\it regret} of policy $\pi$ as the difference between the cost of the hindsight optimum and that incurred by the policy, i.e., 
\begin{align}
\textsc{Reg}[\omega] := V^\pi[\omega] - V^{\textsc{off}}[\omega]
\end{align}

Our aim is to design policies with low expected regret $\mathbb{E}[\textsc{Reg}]$, where the expectation is taken over the randomness in the arrival sequence and the algorithm's decisions. Specifically, we would like the expected regret to be sublinear in the horizon length $T$.

\smallskip 

\begin{remark}
When $K = 1$, our setting is a special case of the problem considered by \citet{balseiro2021regularized}, for which the expected regret of any online policy is lower bounded by $\Omega(\sqrt{T})$ in the worst case.\footnote{Though their $\Omega(\sqrt{T})$ lower bound is derived from a finite capacity constraint $bT$ for a single resource, an equivalent instance can be constructed for our uncapacitated setting, with the deviation cost for a single epoch $g(a) = \delta \left(a-b\right)^+$, for large enough $\delta > 0$.}
\end{remark}

{\subsection{Discussion of Modeling Assumptions}
We conclude this section by discussing our main modeling assumptions. 

\smallskip 

\noindent\textbf{On the cumulative nature of deviation costs.} One of the defining features of our model is the fact that the deviation costs associated with each epoch are {\it cumulative}, i.e., they regularize the {\it running} average assignment from the beginning of the horizon to the end of the epoch, as opposed to the average assignment {\it within} each epoch. As alluded to in the introduction, one setting in which such cumulative regularization is important is in physical processing settings, where backlogs and idling are a concern. To see this, consider a single-resource setting in which $T=4$ and $K = 2$. Suppose moreover that the resource ideally processes one arrival per epoch (this can alternatively be viewed as taking two periods to process each arrival).  Our model represents this setting by defining targets $\rho_1=\rho_2 = 1/2$. This is equivalent to defining an ``absolute'' target of one arrival accepted by the end of the first epoch, and two arrivals accepted by the end of the second epoch. Non-cumulative targets, on the other hand, would specify an absolute target of one accepted arrival {in} each epoch. Consider now a sample path such that the decision-maker accepts two arrivals in the first epoch (which would occur if, for instance, the assignment costs for both arrivals were very low). In reality, the resource still has to process that additional arrival, which can be thought of as ``spilling over'' to the next epoch. Then, the work done to process that extra arrival has essentially taken up the ``budget of work'' that the resource can perform in the second epoch. By modeling the second target as a cumulative target of two arrivals across {\it both} epochs, we are effectively representing the idea that by rejecting all arrivals in the second epoch, the decision-maker will have cleared that backlog. In contrast, in the non-cumulative model, if the decision-maker accepted one arrival in the second epoch (which would be considered ``on-target''), she would now have {\it two} arrivals to process in the second epoch, due to the first-epoch spillover. Since the resource's ideal processing rate is one arrival per epoch, however, one of these two arrivals will spill over to after the end of the horizon, a ``bad outcome'' that is not represented by non-cumulative targets.

Cumulative targets are similarly able to penalize idling in a way that non-cumulative targets cannot. Consider again the above setting, and suppose the decision-maker rejected both arrivals in the first epoch (due to, e.g., very high assignment costs), and accepted one arrival in the second epoch. In this case, non-cumulative targets would only penalize idling in the first epoch. However, what this ignores is that the resource {\it could have} processed two arrivals throughout the entire horizon. Therefore, not only was the resource inefficiently utilized in the first epoch, but its total ``budget of work'' for the entire horizon was unused. Again, in this case, cumulative targets are able to capture such a bad outcome.

 We note that our model can also capture hard, cumulative capacity constraints throughout the horizon by defining the deviation cost functions as overage penalties, as in \Cref{ex:overage}, for large enough $\delta_{ki}^+ > 0$, $k \in [K], i \in [m]$. Such cumulative constraints arise in settings such as outbound truck transportation, where trucks are loaded in an online fashion. For instance, a decision-maker may have two trucks with finite capacity $C$. Both trucks have a scheduled departure time of 5 p.m.; while one is available throughout the day, the other only becomes available for loading after 12 p.m. In this case, there is a budget constraint of $C$ for arrivals in the first half of the day, and a budget constraint of $2C$ for all arrivals throughout the day. Cumulative budget constraints may also arise in online advertising, where an advertiser has a maximum budget on her ad spend throughout the day, but wants to pace the rate at which this budget is allocated (e.g., avoiding outcomes where the budget is used up entirely in the first half of the day) \citep{bhalgat2012online}.  While these considerations can be dealt with via within-epoch budgets, cumulative budgets give the decision-maker enough flexibility to spend more in later epochs if earlier epochs saw limited spend. In this sense, cumulative constraints can be viewed as a flexible middle ground between unpaced (i.e., a single end-of-horizon budget) and strictly paced (i.e., within-epoch budgets) consumptions. 


\smallskip 

\noindent\textbf{On the time-varying targets.} Another important feature of our model is the {\it exogenous} and potentially {\it time-varying} nature of targets. For instance, in the case breaking setting discussed in the introduction, targets are determined well in advance of the decision-making horizon. They are typically generated by forecasts of arrival rates and resource capacities in each hour, and set in a way that limits predicted backlogs and idling. As we will see in real-world data we analyze, in practice these targets can vary significantly throughout the day. {This can be due to the throughput capacity itself being time-varying.} For example, labor and policies may restrict the flexibility of staffing levels throughout day, due to minimum/maximum shift lengths and mandated break times; process designs may restrict the number and configuration of workers assigned to a process; machines may go down for regular scheduled maintenance, etc. Such phenomena result in variability of available capacity throughout the day, independent of the rate of work that arrives in each epoch. {In addition, even if throughput capacity is constant, time-varying targets arise when a nonstationary arrival rate is transformed into an instance of our stationary arrival rate model by rescaling time to make the number of arrival per epoch constant. We outline this time transformation reduction in Appendix \ref{apx:nonstationary}.}
}

\section{Challenges of Multi-Epoch Target Following}\label{sec:challenges}

In this section we establish the unique challenges associated with multi-epoch target following by analyzing the performance of (i) myopic policies that fail to take into account future targets, and (ii) naive modifications of state-of-the-art algorithms for single-epoch problems. 
We show that both can potentially result in severely suboptimal performance.

\subsection{Failure of Myopic Policies}\label{sec:bad-example-myopic}

We first define the myopic problem associated with a single epoch $k$, which minimizes assignment and deviation costs only for that epoch, with no eye toward the future. 

\smallskip 

\begin{definition}
Given current aggregate consumption $z = (z_1,\ldots,z_m)\in\mathbb{N}^m$, the \emph{myopic offline optimum} for epoch $k \in [K]$ is defined as:
\begin{align}
{V}_k^{\textsc{myo-off}}\left[\omega_k \mid z\right] := \min  &\sum_{j \in [n]}\sum_{i\in[m]}c_{ji} Z_{ji}\left(\frac{(k-1)T}{K}:\frac{kT}{K}\right) + \frac{kT}{K}\sum_{i\in[m]} g_{ki}\left(\frac{z_i + Z_i\left(\frac{(k-1)T}{K}:\frac{kT}{K}\right)}{kT/K}\right) \notag \\
\emph{s.t.} & \sum_{i\in [m]} Z_{ji}\left(\frac{(k-1)T}{K}:\frac{kT}{K}\right) \leq \Lambda_j\left(\frac{(k-1)T}{K}:\frac{kT}{K}\right) \quad \forall \ j \in [n] \notag \\
&Z_{ji}\left(\frac{(k-1)T}{K}:T\right) = 0 \quad \forall \ j \in [n], i \not\in\mathcal{S}_j\notag \\
&Z_{ji}\left(\frac{(k-1)T}{K}:\frac{kT}{K}\right) \in \mathbb{N} \quad \forall \ j \in [n], i \in [m]. \notag 
\end{align}
{When $k=1$, we abuse notation and omit the dependence of $V_1^{\textsc{myo-off}}[\omega_1 \mid z]$ on $z$.}
\end{definition}

\smallskip

For any policy $\pi$, let $V_k^\pi\left[\omega_k \mid Z^\pi\left(\frac{(k-1)T}{K}\right)\right]$ denote the assignment and deviation costs incurred during epoch $k$.  We say that $\pi$ has {\it myopic vanishing regret} if there exists $\alpha \in [0,1)$ such that, for all $k \in [K]$ and $Z^\pi\left(\frac{(k-1)T}{K}\right) \in \mathbb{N}^m$,
\[\mathbb{E}\left[V_k^\pi\left[\omega_k \mid Z^\pi\left(\frac{(k-1)T}{K}\right)\right]-{V}_k^{\textsc{myo-off}}\left[\omega_k \mid Z^\pi\left(\frac{(k-1)T}{K}\right)\right] \bigg{|} Z^\pi\left(\frac{(k-1)T}{K}\right)\right] = O(T^\alpha).\]
Notice that the myopic problem for epoch $k$ is a single-epoch regularized allocation problem. Since state-of-the-art algorithms guarantee $O(\sqrt{T/K})$ regret relative to the myopic offline optimum \citep{balseiro2021regularized}, these latter policies have myopic vanishing regret.

The following proposition shows that there exist trivial instances for which myopic algorithms can incur linear regret with respect to the true, multi-epoch offline optimum. 
\begin{proposition}\label{prop:myopic-is-bad}
{There exists an instance for which $\mbox{$\mathbb{E}[\textsc{Reg}] = \Omega(T)$}$ for any policy $\pi$ that has myopic vanishing regret.}
\end{proposition}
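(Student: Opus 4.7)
The plan is to exhibit a deterministic instance on which every policy with myopic vanishing regret incurs $\Omega(T)$ regret relative to $V^{\textsc{off}}$. I would take $K = 2$, $m = 1$, $n = 1$ with $c_{11} = 0$, $\mathcal{S}_1 = \{1\}$ and $p_1 = 1$, so that arrivals are deterministic; the squared deviation cost $g_{ki}(a) = \delta(a - \rho_{ki})^2$ for a fixed constant $\delta > 0$; and cumulative targets $\rho_{11} = 1$, $\rho_{21} = 1/4$. The tension embedded in this instance is that matching $\rho_{11}$ demands accepting every arrival in epoch~$1$, but the cumulative running average can never subsequently decrease, so $\rho_{21}$ is unattainable from that starting state.

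First I would evaluate $V^{\textsc{off}}$. Writing $a_k = Z_1(kT/2)/(kT/2)$, the offline program reduces to minimizing $(T/2)\delta(a_1 - 1)^2 + T\delta(a_2 - 1/4)^2$ subject to the monotonicity constraint $a_2 \geq a_1/2$ and $a_1, a_2 \in [0,1]$. The unconstrained minimizer $(1, 1/4)$ violates monotonicity, so the minimum lies on $a_2 = a_1/2$; a one-variable derivative then yields $a_1^\star = 5/6$, $a_2^\star = 5/12$, and
\[
V^{\textsc{off}} = T\delta/24 + O(1),
\]
where the $O(1)$ absorbs integrality rounding of $Z_1(\cdot)$.

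Next I would use the myopic vanishing regret hypothesis in epoch~$1$. The myopic offline optimum starting from $z = 0$ is $V_1^{\textsc{myo-off}} = 0$, attained by accepting every arrival, so writing $\epsilon := Z_1^\pi(T/2)/T - 1/2$ the hypothesis forces $\mathbb{E}[V_1^\pi] = 2T\delta\cdot\mathbb{E}[\epsilon^2] = O(T^\alpha)$ for some $\alpha < 1$, i.e., $\mathbb{E}[\epsilon^2] = O(T^{\alpha - 1}) = o(1)$. In epoch~$2$ I would use no further hypothesis on the policy beyond the mechanical monotonicity $Z_1^\pi(T) \geq Z_1^\pi(T/2)$. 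Setting $W := Z_1^\pi(T/2 : T)/T \in [0, 1/2]$, the epoch-$2$ cost is $V_2^\pi = T\delta(1/4 + \epsilon + W)^2$; expanding the square and using $|\mathbb{E}[\epsilon]| \leq \sqrt{\mathbb{E}[\epsilon^2]} = o(1)$, $|\mathbb{E}[\epsilon W]| \leq (1/2)\sqrt{\mathbb{E}[\epsilon^2]} = o(1)$ by Cauchy--Schwarz with $W \leq 1/2$, and $\mathbb{E}[W], \mathbb{E}[W^2] \geq 0$, I obtain $\mathbb{E}[V_2^\pi] \geq T\delta/16 - o(T)$.

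Combining with $\mathbb{E}[V_1^\pi] \geq 0$ and the offline value gives $\mathbb{E}[\textsc{Reg}] \geq T\delta/48 - o(T) = \Omega(T)$, as required. The main subtlety is the bridge from myopic vanishing regret --- a bound on an expected \emph{cost} --- to a statement about the random variable $Z_1^\pi(T/2)$ itself; the quadratic penalty is what conveniently furnishes $L^2$-control of $\epsilon$, making the epoch-$2$ lower bound elementary once monotonicity of cumulative assignments is invoked.
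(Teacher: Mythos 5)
Your proof is correct, and it follows the same high-level template as the paper's: a two-epoch, single-resource, single-type, deterministic instance in which epoch-1 myopic optimality pins down $Z^\pi(T/2)$, after which monotonicity of cumulative consumption forces an $\Omega(T)$ loss in epoch 2. The instance itself is genuinely different, though. The paper uses a negative assignment cost $c=-1$ with one-sided \emph{linear} penalties and targets $(0,1)$: the myopic policy rejects everything in epoch 1 and then cannot catch \emph{up} to the cumulative target of $1$, while the offline optimum accepts everything; all quantities are exactly computable and the regret gap is exactly $T/2$. You instead use zero assignment costs with \emph{squared} penalties and targets $(1,1/4)$: the myopic policy accepts everything in epoch 1 and then cannot come \emph{down} to the cumulative target of $1/4$, while the offline optimum compromises at $(a_1,a_2)=(5/6,5/12)$ via a short KKT computation. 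Your quadratic penalty is doing real work in the bridge step --- it converts the myopic-vanishing-regret hypothesis into $L^2$-control $\mathbb{E}[\epsilon^2]=o(1)$, which makes the cross terms in the epoch-2 expansion vanish by Cauchy--Schwarz; the paper's linear cost yields the analogous $L^1$ statement $\mathbb{E}[Z^\pi(T/2)]=O(T^\alpha)$ and then lower-bounds epoch 2 by the epoch-2 myopic offline optimum rather than by direct expansion. Your version also makes explicit that only the $k=1$ instance of the hypothesis is needed, and shows the failure mode is not an artifact of one-sided linear penalties or of reward-seeking (negative-cost) assignments; the paper's version buys cleaner, fully closed-form arithmetic. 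Both are valid proofs of the proposition.
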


\begin{proof}{\it Proof.}
{Consider an instance in which there is $m=1$ resource, $n=1$ type and deterministic assignment cost $c_1 = -1$. Let $K = 2$, with deviation cost functions $g_1(a) = \delta a$ and \mbox{$g_2(a) = \delta(1-a)$}, for all $a \in [0,1]$, and $\delta > 1$ (i.e., epochs 1 and 2 respectively have target consumptions 0 and 1).}
For ease of notation, we suppress the dependence of all quantities on the type and resource. Plugging the assignment and deviation costs for this instance into \eqref{eq:alg-cost}, for any policy $\pi$ we have:
\begin{align}\label{eq:myo-lb}
V^\pi[\omega] &= -Z^\pi(T) +\delta \cdot \frac{T}{2}\left(\frac{Z^\pi(T/2)}{T/2} +2\left(1-\frac{Z^\pi(T)}{T}\right)\right) \notag \\ 
&= \left(-Z^\pi(T/2) + \delta\cdot \frac{T}{2}\cdot \frac{Z^\pi(T/2)}{T/2} \right) + \left(-Z^\pi(T/2:T) + \delta \cdot \frac{T}{2}\cdot 2\left(1-\frac{Z^\pi(T/2) + Z^\pi(T/2:T)}{T}\right) \right).
\end{align}
Notice that the first term in parentheses is lower bounded by 0, since $\delta > 1$. Again using $\delta > 1$, we have that the optimal myopic solution in epoch 1 is to reject all arrivals, yielding $V_1^{\textsc{myo-off}}[\omega_1] = 0$. Moreover, since $\pi$ has myopic vanishing regret, this implies that, in epoch 1:
\begin{align*}
\mathbb{E}\left[-Z^\pi(T/2) + \delta\cdot \frac{T}{2}\cdot \frac{Z^\pi(T/2)}{T/2}\right] = O(T^{\alpha}) \iff \mathbb{E}\left[Z^\pi(T/2)\right] = O(T^{\alpha}).
\end{align*}
Taking the expectation of $V^\pi[\omega]$ over the randomness in the algorithm's decisions, plugging these facts back into \eqref{eq:myo-lb} and re-arranging, there exists a constant $C > 0$ such that:
\begin{align*}
\mathbb{E}\left[V^\pi[\omega]\right] &\geq \mathbb{E}\left[-Z^\pi(T/2:T) + \delta \cdot \frac{T}{2}\cdot 2\left(1-\frac{ Z^\pi(T/2:T)}{T}\right) \right]-\delta CT^{\alpha} \\
&\geq V_2^{\textsc{myo-off}}[\omega_2 \mid 0] - \delta CT^{\alpha},
\end{align*}
where the second inequality follows from the fact that $Z^\pi(T/2:T)$ is feasible for $V_2^{\textsc{myo-off}}[\omega \mid 0]$, and removes the expectation as there is no randomness in the arrival sequence.

Hence, it suffices to show that $V_2^{\textsc{myo-off}}[\omega_2 \mid 0] - V^{\textsc{off}}[\omega] = \Omega(T)$. Re-writing the objective of the myopic offline optimum for epoch 2, we have:
\[V_2^{\textsc{myo-off}}[\omega_2 \mid 0] = \min \  -Z(T/2:T) + \delta T \left(1-\frac{Z(T/2:T)}{T}\right) = -T/2 +\delta T/2,\]
achieved at $Z(T/2:T) = T/2$.

For the true offline optimum (which optimizes across both epochs), we have:
\begin{align*}
V^{\textsc{off}}[\omega] &= \min \ -Z(T) + \delta\left(Z\left(\frac{T}{2}\right) + (T-Z(T))\right)= \min \ \delta T -Z(T) -\delta\cdot Z\left(\frac{T}{2}:T\right)= -T + \delta T/2.
\end{align*}

Putting these two equalities together, we obtain $V_2^{\textsc{myo-off}}[\omega_2 \mid 0]- V^{\textsc{off}}[\omega] = T/2$, completing the proof.
 \hfill\Halmos
\end{proof}

\subsection{Failure of Naive Lagrangian-Based Algorithms}\label{sec:bad-example-naive}

\cref{prop:myopic-is-bad} illustrates that, for an algorithm to have any hope of attaining sublinear regret with respect to the offline optimum, it must take into account targets associated with future epochs when making its assignment decisions. We now show that natural extensions of primal-dual algorithms for the single-epoch setting yield poor performance, again in very simple settings.

In order to do so, we first describe the dual mirror descent framework for the single-epoch problem, developed in \citet{balseiro2021regularized}. At a high level, their algorithm makes a primal assignment decision in each period $t$, denoted by $x^t \in \mathcal{X}^t$, where $\mathcal{X}^t$ represents the set of feasible assignments for this arrival. It then introduces an auxiliary variable $a \in [0,1]^m$ that represents the average consumption of each resource throughout the horizon, and a dual variable $\mu\in\mathbb{R}^m$ that allows the algorithm to decouple $x^t$ from this average consumption. The algorithm makes three decisions in each period. First, given the current arrival and shadow price of each resource, it assigns the arrival to the resource that minimizes the dual-adjusted assignment cost; it then computes an estimate of a dual-adjusted ``idealized'' average consumption $a^t$ for the horizon. Finally, it updates the shadow price for each resource to reconcile the difference in actual and idealized consumptions. For clarity of exposition, we present their algorithm in Appendix \ref{apx:balseiro-et-al}.

Since the auxiliary variable $a$ represents the average consumption of resources throughout the horizon, the natural extension to the multi-epoch setting is to introduce $K$ auxiliary variables $a_k \in \mathbb{R}^m$, $k \in [K]$, each representing the running average consumption throughout the last $k$ epochs. To formalize this, we reformulate the offline problem as a function of the primal assignment decision $x^t$ made in period $t$. We moreover abuse notation and let $c^t = (c^t_1,\ldots,c^t_m)$ denote the cost vector associated the arrival in the $t$-th period. Then, the hindsight optimum can equivalently be written as:
\begin{align}
V^{\textsc{off}}[\omega] = \min &\sum_{t\in[T]}\sum_{i\in[m]}c^t_{i} x^t_i + \frac{T}{K}\sum_{k \in [K]}\sum_{i\in[m]} kg_{ki}\left(a_{ki}\right) \notag  \\
\text{s.t.} 
\qquad &a_{ki} = \frac{\sum_{t\leq kT/K} x^t_i}{kT/K} \quad \forall \ k\in[K], i \in [m] \label{constr:relax}\\
&x^t \in \mathcal{X}^t \quad \forall \ t \in [T], \quad a \in [0,1]^{K\times m}. \notag 
\end{align}
Consider now the Lagrangian relaxation that relaxes constraint \eqref{constr:relax} by introducing dual variables $\mu\in\mathbb{R}^{K\times m}$. Letting $k^t$ denote the epoch that period $t$ is in, the corresponding Lagrange dual function is given by:
\begin{align*}
\mathcal{L}(\mu \mid \omega) =  &\sum_{t\in[T]}\min_{x^t\in\mathcal{X}^t}\sum_{i\in[m]}\left(c^t_{i}-\sum_{k\geq k^t}\mu_{ki}\right) x^t_i + \frac{T}{K}\sum_{k \in [K]}\sum_{i\in[m]} k \min_{a_{ki}\in[0,1]}\left(g_{ki}\left(a_{ki}\right) + \mu_{ki}a_{ki}\right). \notag 
\end{align*}
\Cref{alg:bad-example} describes the natural primal-dual algorithm arising from $\mathcal{L}(\mu\mid\omega)$.

\begin{algorithm}[t]
\DontPrintSemicolon 
\KwIn{Stepsize $\eta = O(1)$, initial shadow prices $\mu^{1} \in \mathbb{R}^{K\times m}$.}
\KwOut{Sequence of assignment decisions.}
\For{$t = 1, \ldots, T$}{
    Observe arrival $t$.\;   
    Compute assignment decision:
   \quad $x^t \in \arg\min_{x\in\mathcal{X}^t}\sum_{i\in[m]}x_i\left(c^t_i-\sum_{k\geq k^t}\mu_{ki}^t\right).$\;
Compute idealized average consumptions for all $k \geq k^t, i \in [m]$:
\quad $a_{ki}^t \in\arg\min_{a\in [0,1]} g_{ki}(a) + \mu_{ki}^ta.$\;
    Update shadow prices via OGD step:
\quad $\mu^{t+1}_{ki} = \mu^{t}_{ki} + \eta(a^{t}_{ki}-{x}^t_i).$
    }
	\caption{Naive Primal-Dual Algorithm}
	\label{alg:bad-example}
\end{algorithm}

\begin{proposition}\label{prop:bad-example}
{There exists an instance in which  \Cref{alg:bad-example} incurs linear regret for any $\eta = O(1)$.}
\end{proposition}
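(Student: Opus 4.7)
The plan is to reuse the single-resource, single-type instance from the proof of \Cref{prop:myopic-is-bad}, but with $\delta$ chosen sufficiently large (e.g., $\delta = T$) to keep \Cref{alg:bad-example} trapped in a specific oscillatory regime: $m=1$, $n=1$, $c = -1$, $K = 2$ epochs, $g_1(a) = \delta a$, $g_2(a) = \delta(1-a)$. As computed in the proof of \Cref{prop:myopic-is-bad}, the offline optimum accepts every arrival, giving $V^{\textsc{off}}[\omega] = -T + \delta T/2$. I would trace the trajectory of the duals $(\mu_1^t, \mu_2^t)$ from the initialization $\mu^1 = 0$ and show that the algorithm accepts roughly $T/4$ arrivals in epoch 1 and all $T/2$ arrivals in epoch 2, producing regret $T/4 = \Omega(T)$.

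For this instance, \Cref{alg:bad-example} simplifies as follows: the primal accepts iff $\mu_1^t + \mu_2^t > -1$ in epoch 1 (or $\mu_2^t > -1$ in epoch 2); the auxiliary values are $a_1^t = 0$ whenever $\mu_1^t > -\delta$ and $a_2^t = 1$ whenever $\mu_2^t < \delta$, both of which hold throughout the trajectory when $\delta \gg \eta T$. Consequently, in epoch 1 each accept decreases $\mu_1$ by $\eta$ and leaves $\mu_2$ unchanged, while each reject leaves $\mu_1$ unchanged and increases $\mu_2$ by $\eta$. First, the algorithm accepts for $\tau_0 = \lceil 1/\eta \rceil$ periods until $\mu_1 \leq -1$; it then enters an oscillation where accepts and rejects strictly alternate, since an accept pushes $\mu_1 + \mu_2$ below $-1$ (triggering a reject next) and a reject pushes it above $-1$ (triggering an accept next). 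This yields $Z^{\text{alg}}(T/2) = \tau_0 + (T/2 - \tau_0)/2 = T/4 + O(1/\eta)$, and by summing $\eta$ over the rejects, $\mu_2^{T/2+1} = \Theta(T\eta) = \Theta(T)$ at the end of epoch 1.

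In epoch 2, only $\mu_2^t$ drives the primal decision. Since $\mu_2^{T/2+1} = \Theta(T) \gg 0$, $-1 - \mu_2 \ll 0$ forces acceptance every period; moreover, $a_2^t = 1 = x^t$ keeps $\mu_2$ unchanged, so acceptance persists throughout epoch 2. Hence $Z^{\text{alg}}(T/2:T) = T/2$ and $Z^{\text{alg}}(T) = 3T/4 + O(1/\eta)$. Plugging into $V^{\text{alg}}[\omega] = -(1+\delta) Z(T) + \delta Z(T/2) + \delta T$ yields $V^{\text{alg}}[\omega] = -3T/4 + \delta T/2 - O(1/\eta)$, so the regret equals $T/4 - O(1/\eta) = \Omega(T)$ for any constant $\eta = O(1)$.

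The main obstacle is ensuring the algorithm remains in the oscillation regime throughout epoch 1 --- this is precisely the failure mode of the naive primal-dual algorithm. If $\mu_1$ were allowed to dip below $-\delta$, the auxiliary value $a_1^t$ would flip to $1$, after which accepts no longer decrement $\mu_1$; one can check the algorithm would then accept essentially every period and achieve only $O(1)$ regret. The choice $\delta \gg \eta T$ prevents this regime change. A secondary subtlety is handling ties at $\mu_1 + \mu_2 = -1$, which is addressed by observing that after the initial accept phase, $\mu_1 + \mu_2$ is confined to $[-1 - \eta, -1 + \eta]$ and each update changes it by exactly $\pm\eta$, so any consistent tiebreaking still produces the same $1/2$ acceptance rate in the oscillation.
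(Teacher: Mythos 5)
Your construction is genuinely different from the paper's proof, which uses a zero-assignment-cost instance with symmetric absolute-deviation penalties and a \emph{constant} $\delta$, and derives linear regret from the fact that the acceptance rate in epoch 1 ``mixes'' to $\tfrac{\rho_1+\rho_2}{2}$ instead of $\rho_1$. Your trajectory analysis (burn-in, then a half-rate oscillation, then full acceptance in epoch 2) is correct as far as it goes, but it leaves two genuine gaps. The first is the step-size range: your regret is $T/4 - \Theta(1/\eta)$, which is $\Omega(T)$ only when $1/\eta = o(T)$, whereas the proposition claims linear regret for \emph{any} $\eta = O(1)$, including $\eta = O(1/T)$. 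For such $\eta$ your instance yields \emph{zero} regret: the burn-in phase, during which $\mu_1^t+\mu_2^t$ must descend from the initialization $0$ to the acceptance threshold $-1$ in steps of size $\eta$, lasts $\lceil 1/\eta\rceil \geq T/2$ periods, so the algorithm accepts every arrival of epoch 1, enters epoch 2 with $\mu_2 = 0 > -1$, accepts everything there as well, and exactly matches the offline optimum (which also accepts everything). Your closing phrase ``for any constant $\eta = O(1)$'' quietly replaces $\eta = O(1)$ by $\eta = \Theta(1)$. The paper's instance has no burn-in because its assignment cost is $0$, so the accept/reject threshold sits at the dual initialization and the telescoping correction $\frac{(\mu_1^1+\mu_2^1)-(\mu_1^{T/2+1}+\mu_2^{T/2+1})}{\eta T}$ has an $O(\eta)$ numerator rather than an $\Omega(1)$ one.

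The second gap is that your argument is only valid when $\delta \gg \eta T$, i.e., when the Lipschitz constant of the deviation costs grows linearly in $T$; as you note yourself, with constant $\delta$ the dual $\mu_1$ crosses $-\delta$ after $O(\delta/\eta)=O(1)$ periods, $a_1^t$ flips to $1$, the duals freeze, and the algorithm accepts everything thereafter, incurring only $O(1)$ regret. So the growing $\delta$ is essential, not cosmetic. This conflicts with the model's standing assumption that each $g_{ki}$ is $L$-Lipschitz with $L$ a fixed constant (which enters the paper's upper bounds), and it substantially weakens the counterexample: linear regret against a penalty whose scale is itself $\Theta(T)$ is far less damning than linear regret with all primitives held constant, which is what the paper's construction delivers. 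To repair the argument you would need either to switch to an instance in the spirit of the paper's (zero assignment cost, two-sided penalties, constant $\delta$, where the failure is a persistent constant-factor mixing of the two targets for every step size), or to supply separate bad instances for the regimes $\eta = O(1/T)$ and $\delta = \Theta(1)$.
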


\begin{proof}{\it Proof.}
{Consider an instance in which $m = 1$, $K = 2$, and with a single type that has an assignment cost of 0. For ease of notation, in the remainder of the instance we omit the dependence of all quantities on the index of the single resource. The deviation costs for each epoch are given by $\mbox{$g_1(a_1) = \delta|a_1-\rho_1|$}$, $g_2(a_2) = \delta|a_2-\rho_2|$, and large $\delta > 0$. Suppose moreover that $\rho_1 \in [0,1]$ and $\mbox{$2\rho_2-\rho_1 \in \left[0,1\right]$}$. For any such $\rho_1$, $\rho_2$, the assignment that accepts $\rho_1$ fraction of assignments in the first $T/2$ periods and $2\rho_2-\rho_1$ fraction of assignments in the last $T/2$ periods achieves the two cumulative targets in each epoch.\footnote{We assume for technical simplicity that $T\rho_1$ and $T(2\rho_2-\rho_1)$ are integral.} Finally, let $\delta$ be large enough that \Cref{alg:bad-example} yields $a^t_{k} = \rho_k$ for all $t$.\footnote{The assumption that $a^t_k = \rho_k$ for all $t$ is for expositional simplicity. As long as $\delta$ is a large enough constant, for any $\eta = O(1)$ there exists a constant $t_0$ past which $a^t_k = \rho_k$ for all $t > t_0$, which is all we need to prove our result.}
}

Since assignment costs are uniformly 0 throughout the horizon, for this instance we have $V^{\textsc{off}}[\omega] = 0$, with the optimal solution accepting $\rho_1$ fraction of arrivals in the first epoch, and $2\rho_2-\rho_1$ fraction of arrivals in the second epoch.

We analyze $\frac{\sum_{t\leq T/2}x^t}{T/2}$, the average consumption of the resource in the first epoch under \Cref{alg:bad-example}. Since $a_1^t = \rho_1$, $a_2^t = \rho_2$ for all $t \leq T/2$, the dual variables satisfy:
\begin{align}\label{eq:bad-example-csp}
&\mu_1^{t+1} + \mu_2^{t+1} = \mu_1^t + \mu_2^t + \eta(\rho_1 + \rho_2 - 2x^t) \notag \\
\implies &x^t = \frac{\rho_1 +\rho_2}{2} + \frac{(\mu_1^t + \mu_2^t)-(\mu_1^{t+1} + \mu_2^{t+1})}{2\eta} \notag \\
\implies & \frac{\sum_{t\leq T/2}x^t}{T/2} = \frac{\rho_1+\rho_2}{2} + \frac{(\mu_1^1+\mu_2^1)-(\mu_1^{T/2+1}+\mu_2^{T/2+1})}{\eta T}.
\end{align}

Notice that $x^t = \mathds{1}\{\mu_1^t + \mu_2^t > 0\}$ for all $t \leq T/2$. The dual update step in \Cref{alg:bad-example} implies that $\mu_1^t + \mu_2^t$ is a process with negative drift whenever it is positive, and positive drift whenever it is negative. Since $a^t_{k}$ and $x^t$ are constants that do not depend on $T$, $\mu_1^{t}+\mu_2^{t}$ must be bounded in absolute value by a constant, for any $\eta = O(1)$. So, for $\eta T = \Theta(T^{\epsilon})$, for some $\epsilon > 0$: 
\begin{align}\label{eq:lim}
\lim_{T\to\infty}\frac{(\mu_1^1+\mu_2^1)-(\mu_1^{T/2+1}+\mu_2^{T/2+1})}{\eta T} = 0.
\end{align}
Plugging this fact into \eqref{eq:bad-example-csp}, this then implies that $\frac{\sum_{t\leq T/2}x^t}{T/2}$ converges to $\frac{\rho_1+\rho_2}{2}$ as $T$ grows large. 

Separately, for $\eta T = \Theta(T^{-\epsilon})$ for some $\epsilon \geq 0$, $\frac{\sum_{t\leq T}x^t}{T/2}$ either converges to $\frac{\rho_1+\rho_2}{2}$, or fails to converge.

In the second epoch, similar arguments give us that $\frac{\sum_{t = T/2+1}^T x^t}{T/2}$ converges to $\rho_2$, or fails to converge as $T$ grows large. Hence, the {\it running} average consumption at the end of the second epoch either converges to $\frac{0.5(\rho_1+\rho_2)}{2}+\frac{\rho_2}{2} = \frac14\rho_1 + \frac34\rho_2$, or fails to converge. 

In all cases, there is at least a constant gap between the target and actual running average consumptions in both epochs, for large enough $T$. We thus obtain $\textsc{Reg}^\pi[\omega] = \Omega(T)$. 
\hfill\Halmos
\end{proof}

\smallskip

The proof of \cref{prop:bad-example} illustrates that naively using the dual variables for {future} epochs in the primal assignment decision in period $t$ can yield poor performance, since this induces the primal decision to track all idealized consumptions {\it simultaneously}. This results in undesirable ``mixing'' of targets across epochs, as we observed for the above instance: \Cref{alg:bad-example} accepted $\frac{\rho_1 + \rho_2}{2}$ fraction of arrivals in the first epoch, as opposed to $\rho_1$ fraction of arrivals. In the following section we design an algorithm that avoids this undesirable behavior.

        \section{Main Result}\label{sec:main-result}

\Cref{alg:bad-example} is motivated by the observation that, when making an assignment decision, an online algorithm must take into account the fact that its current consumption affects its ability to adhere to targets in future epochs. Moreover, dual variables that reflect the deviation between the algorithm's current consumption and the idealized consumption in future epochs contain useful information that an algorithm should use in its decisions. \cref{prop:bad-example} showed, however, that {\it how} to incorporate this information effectively is more subtle than it at first appears. In this section we design an algorithm based on using proxy assignments that addresses this challenge. We then prove its associated sublinear regret guarantee. 

\subsection{Algorithm Description} 

We first describe our algorithm at a high level. As in \Cref{alg:bad-example}, our algorithm maintains a dual variable (also referred to as a shadow price), for each epoch $k \in [K]$. Our algorithm, however, sidesteps the undesirable mixing of targets exhibited in \Cref{prop:bad-example} by {only} using the dual variable for the {\it current} epoch in the primal assignment decision. Then, to keep track of how primal assignment decisions affect future epochs, it also computes a {\it proxy} assignment decision for each future epoch. At a high level, each proxy assignment decision simulates the primal assignment decision the algorithm {\it would} make in that future epoch for the same arrival, given the arrival type and the simulated consumption of the resource. This decision is the solution to a dual-adjusted assignment cost minimization problem, where the dual price used is the one associated with the future epoch of interest. The proxy assignment decisions for future epochs functionally act as endogenous predictions of future consumption. Importantly, they are never actually implemented. However, they play a crucial role in that they indirectly feed back into the primal assignment decisions in the current epoch to increase (resp., decrease) the current consumption of the resource, as the predicted consumption in future epochs begins to deviate from their respective targets.  

Separate from computing the proxy assignment decisions in each period, our algorithm computes auxiliary variables for the current and all future epochs that represent the idealized average consumption of each resource {\it during} each epoch (as opposed to the running average consumption, as in \Cref{alg:bad-example}). These idealized average consumptions solve a dual-adjusted deviation cost minimization problem that couples together the average consumptions across the current and all future epochs. Finally, the dual variables for each epoch are updated in such a way to reconcile the discrepancy between the proxy assignment decision and idealized average consumption. This is done via a standard dual gradient descent step.

We next provide a formal description of our algorithm --- Dual Gradient Descent with Proxy Assignments --- in \Cref{alg:multi-stages}. Consider an arbitrary period $t$, and let $k$ denote the epoch in which it is contained. Given the current shadow price associated with the current and future epochs, denoted by $\mu_{k'}^t$ for $k' \geq k$, our algorithm first computes the proxy assignment decision for the $t$-th arrival for each $k' \geq k$, according to \eqref{eq:proxy-step}. The primal assignment decision for period $t$ is identical to the proxy decision for epoch $k$. We then compute the idealized average consumptions for all epochs $k' \geq k$, as described in \eqref{eq:many-stages-aux-problem}. These idealized average consumptions take as input the current shadow prices $\mu_{k'}^t$, for all $k' \geq k$, as well as the cumulative consumption up until the end of epoch $k-1$, i.e., $\sum_{t' \leq (k-1)T/K}x^{t'}$. Finally, \eqref{eq:ogd-step} describes the dual gradient descent update, which reconciles the proxy assignment decision and the idealized average consumption for each epoch $k' \geq k$. If the idealized consumption $a_{k'i}^{t}$ exceeds the proxy assignment decision $\tilde{x}_{k'i}^{t}$ for resource $i$ in epoch $k'$, $\mu_{k'i}^{t}$ increases. Hence, in period $t+1$, the algorithm is more likely to proxy assign the arrival to resource $i$ for epoch $k'$, according to \eqref{eq:proxy-step}. On the other hand, the idealized consumption for resource $i$ in epoch $k'$ will likely decrease, by \eqref{eq:many-stages-aux-problem}. Conversely, if $\tilde{x}_{k'i}^t$ exceeds $a_{k'i}^t$, $\mu_{k'i}^{t}$ decreases, in which case the reverse holds: the algorithm is less likely to proxy assign the arrival to resource $i$ for epoch $k'$, and the idealized average consumption will likely increase.

\begin{algorithm}[t]
\DontPrintSemicolon 
\KwIn{Stepsize $\eta$, initial shadow prices $\mu^1\in\mathbb{R}^{K\times m}$}
\KwOut{Sequence of assignment decisions}
\For{$t = 1, \ldots, T$}{
    Let $k$ be the current epoch. \;
    If $t = (k-1)T/K+1$, set $\mu^t_{k'} = \mu^1_{k'}$ for all $k' \geq k$. \;
    Observe arrival type $j = j^t$, and make proxy assignment decisions for all $k' \geq k$:
    \begin{equation}\label{eq:proxy-step}
    \begin{aligned}
        \proxyx{k'}{t} \in \arg\min_{x\in \mathcal{X}^t} \quad &\sum_{i \in [m]}x_i(c_{ji}-\dual{k'i}{t})
    \end{aligned}
    \end{equation}\;
    Make primal assignment decision $x^t = \proxyx{k}{t}$.\;

    Compute idealized average consumptions for all future epochs:
    \begin{equation}\label{eq:many-stages-aux-problem}
    \begin{aligned}
        a^t \in \arg\min_{a \in [0,1]^{(K+1-k)\times m}} \quad &\sum_{k' \geq k}k'\sum_{i\in[m]}g_{k'i}\left(\frac{\sum_{t' \leq (k-1)T/K}x^{t'}_i}{k'T/K} + \sum_{k''=k}^{k'}\frac{a_{k'',i}}{k'}\right) + \sum_{k' \geq k}\sum_{i\in[m]}\mu^t_{k'i}a_{k'i} 
    \end{aligned}
    \end{equation}\;
    Update shadow prices via OGD step:
    \begin{align}\label{eq:ogd-step}
    \dual{k'}{t+1} = \dual{k'}{t} + \eta(\auxa{k'}{t}-\proxyx{k'}{t}) \quad \forall \ k'\geq k.
    \end{align}
        }
	\caption{Dual Gradient Descent with Proxy Assignments}
	\label{alg:multi-stages}
\end{algorithm}

The main conceptual contribution of our algorithm is the introduction of the proxy assignment decisions. The subtle dependence between the proxy decisions and the actual assignment decisions (as opposed to the explicit dependence in \Cref{alg:bad-example}) turns out to be the key to simultaneously managing multiple epochs. For clarity of exposition, we illustrate the indirect dependence between these two quantities in Figures \ref{fig:feedback-loop-2} and \ref{fig:feedback-loop-1}.  \Cref{fig:feedback-loop-2} first shows the dependence of the primal assignment decision on past proxy assignment decisions. Observe that it takes two periods for the proxy assignment decisions to be reflected in the primal assignment $x^t$. This is done through $\mu_k^t$ via \eqref{eq:proxy-step}, which depends on $a_k^{t-1}$ via \eqref{eq:ogd-step}. Since $a_k^{t-1}$ solves the joint deviation problem \eqref{eq:many-stages-aux-problem}, it depends on the shadow prices associated with {\it all} future epochs in period $t-1$, each of which is a function of the corresponding proxy assignment decision made in period $t-2$, again via \eqref{eq:ogd-step}.  Conversely, \Cref{fig:feedback-loop-1} shows how past primal assignment decisions are reflected in the proxy assignment decisions for future epochs. In this case, it similarly takes two periods for information about primal assignment to propagate to the proxy consumption trajectory. As above, the link between the two is the joint deviation problem \eqref{eq:many-stages-aux-problem}  {Finally, we note again that in Appendix \ref{apx:nonstationary} we extend our algorithm to the nonstationary setting in which the number of arrivals varies across epochs.}

\begin{figure}
\centering
\begin{tikzpicture}[auto]
\matrix[matrix of nodes, row sep=1cm, column sep=0.5cm] (m) {
    & |(x_t)| $\textcolor{red}{x^t} = f_1(\mu_k^t)$ & \\
    & |(mu_t_k)| $\mu_k^t = f_2(\mu_k^{t-1}, a_k^{t-1}, x_k^{t-1})$ & \\
    & |(a_t_1)| $a_k^{t-1} = f_3(\mu_k^{t-1}, \mu_{k+1}^{t-1}, \dots, \mu_K^{t-1};X_{k-1})$ & \\
    |(mu_t_1_k)| $\mu_k^{t-1} = f_2(\mu_k^{t-2}, a_k^{t-2}, \textcolor{red}{\tilde{x}_k^{t-2}})$ & |(mu_t_1_k1)| $\mu_{k+1}^{t-1} = f_2(\mu_{k+1}^{t-2}, a_{k+1}^{t-2}, \textcolor{red}{\tilde{x}_{k+1}^{t-2}})$ & |(dots)| $\ldots$ & |(mu_t_2_k)| $\mu_K^{t-1} = f_2(\mu_K^{t-2}, a_K^{t-2}, \textcolor{red}{\tilde{x}_K^{t-2}})$
    \\
    |(prox_k_t_2)| $\textcolor{red}{\tilde{x}_k^{t-2}} = f_1(\mu_k^{t-2})$ & |(prox_k1_t_2)| $\textcolor{red}{\tilde{x}_{k+1}^{t-2}} = f_1(\mu_{k+1}^{t-2})$ & |(dots)| $\ldots$ & |(prox_K_t_2)| $\textcolor{red}{\tilde{x}_K^{t-2}} = f_1(\mu_K^{t-2})$ \\
};

\draw[red, thick, ->] (mu_t_k) -- (x_t);
\draw[red, thick, ->] (a_t_1) -- (mu_t_k);
\draw[red, thick, ->] (mu_t_1_k) -- (a_t_1);
\draw[red, thick, ->] (mu_t_1_k1) -- (a_t_1);
\draw[red, thick, ->] (mu_t_2_k) -- (a_t_1);
\draw[red, thick, ->] (prox_k_t_2) -- (mu_t_1_k);
\draw[red, thick, ->] (prox_k1_t_2) -- (mu_t_1_k1);
\draw[red, thick, ->] (prox_K_t_2) -- (mu_t_2_k);
\end{tikzpicture}
\caption{Dependence of primal assignment decision $x^t$ on past proxy assignment decisions, $\tilde{x}_k^{t-2}, \tilde{x}_{k+1}^{t-2},\ldots, \tilde{x}_K^{t-2}$. Here, $k$ is used to denote the epoch containing period $t$. The functions $f_1,f_2,f_3$ respectively correspond to subroutines \eqref{eq:proxy-step}, \eqref{eq:ogd-step}, and \eqref{eq:many-stages-aux-problem} in \Cref{alg:multi-stages}, and $X_{k-1}=\sum_{t\leq (k-1)T/K}x^t$ represents the cumulative consumption of resources up until the end of epoch $k-1$. The lowest level of the diagram shows the proxy assignment decisions computed in period $t-2$, for each epoch $k' \geq k$. In the next level, these are used to update the dual variables for each respective epoch. {\it All} dual variables are then used to compute $a_k^{t-1}$, which then feeds into the dual variable update for epoch $k$. This dual variable is used to compute the primal assignment decision in period $t$ at the topmost level.}\label{fig:feedback-loop-2}
\end{figure}

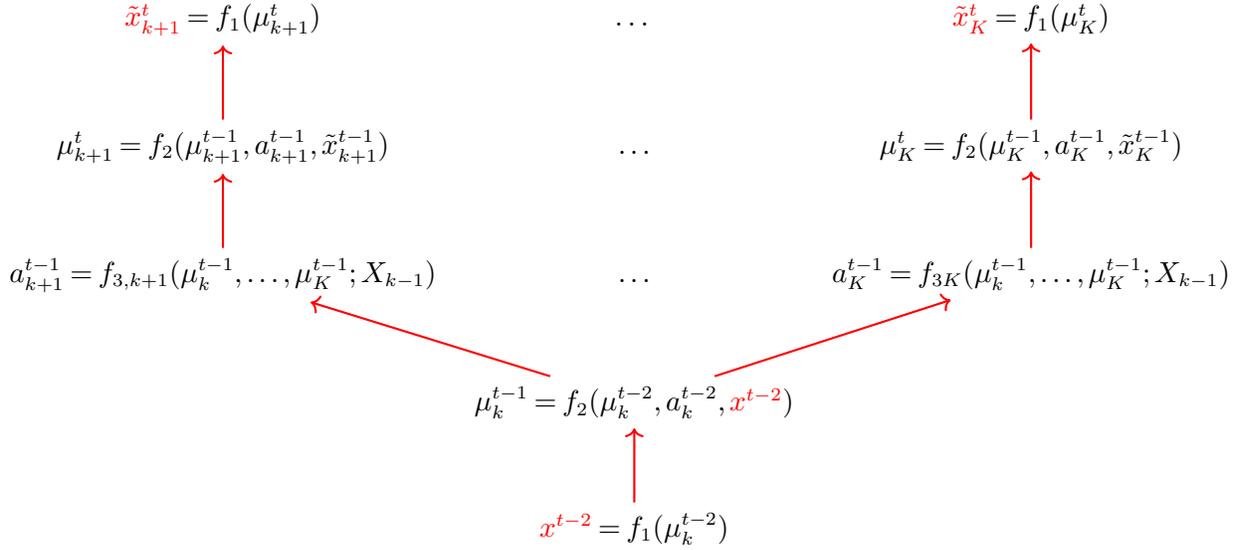
\begin{figure}
\centering
\begin{tikzpicture}[auto]
\matrix[matrix of nodes, row sep=1cm, column sep=0.25cm] (m) {
  |(x_k1_t)| $\textcolor{red}{\tilde{x}_{k+1}^t} = f_1(\mu_{k+1}^t)$ & \ldots & |(x_K_t)| $\textcolor{red}{\tilde{x}_K^t} = f_1(\mu_K^t)$ \\
  |(mu_t_k1)| $\mu_{k+1}^t = f_2(\mu_{k+1}^{t-1}, a_{k+1}^{t-1}, \tilde{x}_{k+1}^{t-1})$ & $\ldots$ &|(mu_t_K)| $\mu_{K}^t = f_2(\mu_K^{t-1}, a_K^{t-1}, \tilde{x}_K^{t-1})$\\
  |(a_t_k1)| $a_{k+1}^{t-1} = f_{3,k+1}(\mu_k^{t-1},\ldots,\mu_K^{t-1};X_{k-1})$ & $\ldots$ &|(a_t_K)| $a_{K}^{t-1} = f_{3K}(\mu_k^{t-1}, \ldots,\mu_K^{t-1};X_{k-1})$\\
     & |(mu_t1_k)| $\mu_k^{t-1} = f_2(\mu_k^{t-2}, a_k^{t-2},\textcolor{red}{{x}^{t-2}})$ & \\
     & |(x_t2_k)| $\textcolor{red}{x^{t-2}} = f_1(\mu_k^{t-2})$ &\\
};

\draw[red, thick, ->] (mu_t_k1) -- (x_k1_t);
\draw[red, thick, ->] (mu_t_K) -- (x_K_t);
\draw[red, thick, ->] (a_t_k1) -- (mu_t_k1);
\draw[red, thick, ->] (a_t_K) -- (mu_t_K);
\draw[red, thick, ->] (mu_t1_k) -- (a_t_k1);
\draw[red, thick, ->] (mu_t1_k) -- (a_t_K);
\draw[red, thick, ->] (x_t2_k) -- (mu_t1_k);
\end{tikzpicture}
\caption{Dependence of proxy assignment decisions in future epochs, $\tilde{x}_{k+1}^t,\ldots,\tilde{x}_K^t$ on past primal assignment decisions in period $t$, where $k$ is used to denote the epoch containing period $t$. The functions $f_1,f_2,f_{3k'}$, $k' \geq k$, respectively correspond to subroutines \eqref{eq:proxy-step}, \eqref{eq:ogd-step}, and \eqref{eq:many-stages-aux-problem} in \Cref{alg:multi-stages}, and $X_{k-1}=\sum_{t\leq (k-1)T/K}x^t$ represents the cumulative consumption of resources up until the end of epoch $k-1$. The lowest level of the diagram shows the primal assignment decision in period $t-2$, which depends only on the dual variable associated with period $k$. This decision is then used to update $\mu_k^{t-1}$ in the next level. Afterwards, $\mu_k^{t-1}$ is used to compute the idealized average consumptions for all $k' \geq k$, according to \eqref{eq:many-stages-aux-problem}. Each of these is then used to update its respective dual variable in period $t$. Finally, in the topmost level each dual variable is used to compute the proxy assignment for epochs $k+1$ to $K$, as in \eqref{eq:proxy-step}.}\label{fig:feedback-loop-1}
\end{figure}


\subsection{Performance Guarantee and Analysis}

We now analyze the expected regret of \Cref{alg:multi-stages}. Our main result for this section is the following.

\begin{theorem}\label{thm:regret}
For $\eta = \Theta(\sqrt{K/T})$, the gap between the expected cost incurred by \Cref{alg:multi-stages} and the offline benchmark is upper bounded by
$$\mathbb{E}[\textsc{Reg}] = O(K^{5/2}\sqrt{T}).$$
\end{theorem}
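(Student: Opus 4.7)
My plan is to decompose the regret epoch-by-epoch against a fluid benchmark, then within each epoch perform a dual-mirror-descent-style analysis that accounts for the novel proxy assignment mechanism.

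\emph{Step 1 (Fluid benchmark).} I would first define, for each epoch $k$ and initial consumption state $z$, the fluid optimum $V_k^{\mathrm{fluid}}(z)$ obtained by solving the deterministic version of the epoch-$k$ subproblem in which arrivals of type $j$ occur at rate $p_j$ (so there are $(T/K)p_j$ fractional arrivals of each type per epoch). Summing yields a full-horizon fluid value $V^{\mathrm{fluid}}$. By standard concentration applied to the $T/K$ i.i.d.\ arrivals per epoch, one obtains $V^{\mathrm{fluid}} \leq \mathbb{E}[V^{\textsc{off}}[\omega]] + O(K\sqrt{T/K})$, so it suffices to upper bound $\mathbb{E}[V^\pi[\omega]]-V^{\mathrm{fluid}}$.

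\emph{Step 2 (Dual analysis within an epoch).} Because \Cref{alg:multi-stages} resets the dual variables at the start of each epoch, the dynamics within an epoch can be analyzed in isolation. Within epoch $k$, the algorithm runs online gradient descent on the $(K-k+1)\cdot m$ dual variables $\mu_{k'}^t$, $k' \geq k$, using $a_{k'}^t - \tilde x_{k'}^t$ as the gradient estimate. A standard OGD bound implies that, for any fixed $\mu^\star$, the cumulative inner product $\sum_{t\in\mathcal{T}_k} \langle \mu^t - \mu^\star,\, a^t - \tilde x^t\rangle$ is at most $O(\|\mu^\star\|^2/\eta + \eta\,|\mathcal{T}_k|\,L^2 K^2)$, where the $K^2$ factor reflects the $kT/K$ scaling of the deviation costs. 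Taking $\mu^\star$ to be the optimal fluid dual for the epoch-$k$ subproblem and applying Lagrangian duality (as in \citet{balseiro2021regularized}) converts this into a bound relating the per-epoch cost of \Cref{alg:multi-stages} to $V_k^{\mathrm{fluid}}$, up to the OGD error term plus the stochastic error from proxy assignments handled in the next step.

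\emph{Step 3 (Proxy assignment variance).} Because $\tilde x_{k'}^t$ for $k' > k$ is computed using the realized arrival type $j^t$ at time $t$, rather than an arrival actually occurring in epoch $k'$, one might worry that the dual updates for future epochs are biased. The key observation is that arrivals are i.i.d., so $\mathbb{E}[\tilde x_{k'}^t \mid \mu_{k'}^t]$ coincides with the expected assignment that would be made for a representative arrival in epoch $k'$ under the same dual. Hence the proxy updates form an unbiased stochastic-gradient scheme for the correct fluid dual problem, and the residual variance can be controlled via an Azuma/Doob martingale concentration argument. Combining this with Step 2, summing over epochs, and choosing $\eta = \Theta(\sqrt{K/T})$ yields the claimed $O(K^{5/2}\sqrt{T})$ bound.

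\emph{Main obstacle.} The hardest part is the two-way feedback depicted in Figures~\ref{fig:feedback-loop-2} and \ref{fig:feedback-loop-1}. The joint deviation minimization \eqref{eq:many-stages-aux-problem} couples all $K-k+1$ idealized consumptions at once, so one must carefully verify that each epoch's dual trajectory concentrates around its own fluid dual rather than settling on a ``mixture'' of duals, which is precisely the failure mode exhibited by \Cref{alg:bad-example} in \Cref{prop:bad-example}. I expect this to require showing that the proxy-assignment feedback loop acts as a contraction toward the correct fluid duals, and tracking how perturbations in one component of $\mu^t$ propagate through \eqref{eq:many-stages-aux-problem} to the others.
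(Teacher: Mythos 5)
Your high-level shape (epoch-by-epoch decomposition, a within-epoch dual analysis, and an appeal to the i.i.d.\ arrivals to justify the proxies) matches the paper's, but there are two genuine gaps, and the step you flag as the main obstacle is not actually where the difficulty lies. The central missing idea is how the per-epoch benchmarks chain together into the global benchmark $V^{\textsc{off}}[\omega]$. You write that ``summing yields a full-horizon fluid value,'' but the epoch-$k$ subproblem starts from the \emph{algorithm's} realized state $Z^\pi\left(\frac{(k-1)T}{K}\right)$, and the sum of per-epoch optima from those states does not equal (or even bound) the global optimum --- this is exactly the failure mode of \Cref{prop:myopic-is-bad} if the subproblem is myopic, and if instead the subproblem is forward-looking (including deviation costs for all $k'\ge k$, as it must be), you still need to relate the epoch-$k$ value to the epoch-$(k{+}1)$ value plus the actual epoch-$k$ cost. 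The paper does this via a telescoping argument (\Cref{lem:alg-costs-to-proxy-costs} and \Cref{lem:unimplemented-costs-to-proxy-offline}): the cumulative proxy cost in epoch $k$ decomposes into the \emph{actual} epoch-$k$ cost plus the cost of the non-implemented proxy decisions, and those non-implemented decisions, rescaled by the ratio of arrival counts, form a near-feasible solution for the epoch-$(k{+}1)$ proxy benchmark, so their cost dominates $\widetilde{V}^{\textsc{off}}_{k+1}$ up to a term controlled by $\sum_j\lvert\Lambda_j(\mathcal{T}_{k+1})-\Lambda_j(\mathcal{T}_k)\rvert$, whose expectation is $O(\sqrt{T/K})$ per type. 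This is also the answer to your second gap: the dual/OGD analysis within an epoch naturally bounds the cost of the \emph{proxy} decisions (most of which are never implemented), and without the telescoping identity you have no way to convert that into a bound on the cost of the decisions actually executed.

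On your ``main obstacle'': no contraction argument for the feedback loop is needed. The paper observes (\Cref{lem:balseiro-bound}) that within epoch $k$ the algorithm is \emph{literally} an instance of the single-epoch dual mirror descent algorithm of \citet{balseiro2021regularized} applied to an auxiliary problem with $(K-k+1)m$ resources indexed by $(k',i)$, horizon $T/K$, joint decision $(\tilde{x}^t_{k'i})_{k'\ge k, i}$, and the single regularizer given by the joint deviation problem \eqref{eq:many-stages-aux-problem}; their Theorem~1 then gives $O(\sqrt{T/K})$ proxy regret per epoch as a black box, with no need to track how perturbations in one dual component propagate to the others. Your Step~3 intuition (i.i.d.\ arrivals make the proxies representative) is correct in spirit, but the paper implements it through the concentration of the multinomial counts across consecutive epochs rather than through an unbiased-stochastic-gradient argument; the former is what plugs directly into the telescoping bound. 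Finally, a minor point on Step~1: the paper never introduces a fluid relaxation --- it compares throughout to sample-path hindsight optima (the proxy offline benchmarks and $V^{\textsc{off}}[\omega]$ itself, related via \Cref{lem:proxy-cost-epoch-1}) --- so if you pursue the fluid route you additionally owe an argument relating $V^{\mathrm{fluid}}$ to $\mathbb{E}[V^{\textsc{off}}[\omega]]$ in the direction you need.
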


\subsubsection{Proof outline.} One can view the original target-following problem as requiring the decision-maker to solve multiple single-epoch problems, each of which has a progressively longer time horizon, with a {\it single} decision in each period. Our algorithm essentially relaxes the requirement that a single decision be played for all epochs through the proxy assignment decisions. In other words, it decouples the $K$ epochs by allowing the decision-maker to play $K$ different (though correlated) decisions in each period. {The crux of our contribution is the reduction from the original problem to $K$ easier, single-epoch problems to be solved in each period.} Given this, the analysis of our algorithm's regret bound reduces to bounding two sources of loss: (i) the loss incurred by \Cref{alg:multi-stages} if it could actually implement the proxy assignment decisions in each epoch, and (ii) the loss incurred from the fact that the decision-maker can only implement a single decision per epoch. We refer to this latter source of loss as the {\it decoupling loss}. 

It is a priori non-obvious that the decoupling loss of our algorithm is low. In particular, this would be a significant source of regret if the proxy assignment decisions in a given epoch failed to be representative of the primal assignment decisions that were made in future epochs. Recall, the proxy assignments act as our algorithm's predictions of future consumption; \Cref{fig:feedback-loop-2} illustrates that inaccurate predictions about consumption in future epochs may result in bad primal assignment decisions in the current epoch. This would then lead to large regret with respect to the true hindsight optimum. Observe moreover that the proxy assignment decisions in any given period are necessarily inaccurate, since our algorithm does not have access to the true assignment costs in future epochs; it uses the current assignment costs as a {\it proxy} for those future costs. In our analysis, however, we exploit the stationarity of our setting to show that using the type of the current arrival for the proxy assignment decision results in simulated consumption trajectories that are approximately representative of the the true consumption trajectories. We formalize these ideas below. 

\subsubsection{Proof of \cref{thm:regret}.} We first introduce the notion of cumulative {\it proxy} cost incurred by the policy $\pi$ induced by \Cref{alg:multi-stages}. In particular, the proxy costs in epoch $k \in [K]$ are the costs incurred by the {\it proxy} assignment decisions in that epoch, in addition to the deviation costs for $k'\geq k$ under those proxy decisions. To formalize this, we let $\widetilde{Z}_{ji,k}^\pi\left(\frac{(k-1)T}{K}:T\right) = \sum_{t \in \mathcal{T}_k}\sum_{k'\geq k}\tilde{x}_{k'i}^{t}\mathds{1}\{j^{t} = j\}$ be the number of proxy assignment decisions of type $j$ made to resource $i$ throughout epoch $k$. We moreover let $\widetilde{Z}_{i,k}^\pi\left(\frac{(k-1)T}{K}:\frac{k'T}{K}\right) = \sum_{t\in\mathcal{T}_k}\sum_{k''=k}^{k'}\tilde{x}_{k''i}^t$ be the  number of proxy assignment decisions to resource $i$ for epochs $k$ through $k'$ made during epoch $k$. With this notation in hand, we formalize the notion of cumulative proxy costs below.

\smallskip 

\begin{definition}\label{def:proxy-online}
Consider epoch $k \in [K]$. Given arrival sequence $\omega_k = (j^t, t\in\mathcal{T}_k)$ and current cumulative assignment $z = (z_1,\ldots,z_m) \in \mathbb{N}^m$, the \emph{cumulative proxy cost} incurred by $\pi$ in epoch $k$ is given by:
\begin{align}\label{eq:proxy-cost}
\widetilde{V}_k^{\pi}[\omega_k \mid z] := \sum_{j \in [n]}\sum_{i\in[m]}c_{ji}\widetilde{Z}_{ji,k}^{\pi}\left(\frac{(k-1)T}{K}:T\right) + \frac{T}{K}\sum_{k'\geq k}\sum_{i\in[m]} k'g_{k'i}\left(\frac{z_i + \widetilde{Z}_{i,k}^{\pi}\left(\frac{(k-1)T}{K}:\frac{k'T}{K}\right)}{k'T/K}\right)
\end{align}
\end{definition}

\smallskip

 We next define the corresponding proxy offline benchmark associated with epoch $k$, which minimizes the proxy costs from epoch $k$ onwards, given upfront information about the sequence of arrivals in epoch $k$. 

\smallskip
 
\begin{definition}\label{def:proxy-offline}
Consider epoch $k\in[K]$. Given arrival sequence $\omega_k = (j^t, t\in\mathcal{T}_k)$ and current cumulative assignment $z = (z_1,\ldots,z_m) \in \mathbb{N}^m$, the \emph{proxy offline optimum} for epoch $k$ is defined as:
\begin{align}
\widetilde{V}_k^{\textsc{off}}[\omega_k \mid z] := \min  &\sum_{j \in [n]}\sum_{i\in[m]}c_{ji} Z_{ji}\left(\frac{(k-1)T}{K}:T\right) + \frac{T}{K}\sum_{k'\geq k}\sum_{i\in[m]} k'g_{k'i}\left(\frac{z_i + Z_i\left(\frac{(k-1)T}{K}:\frac{k'T}{K}\right)}{k'T/K}\right) \notag \\
\text{s.t.} & \sum_{i\in [m]} Z_{ji}\left(\frac{(k'-1)T}{K}:\frac{k'T}{K}\right) \leq \Lambda_j\left(\frac{(k-1)T}{K}:\frac{kT}{K}\right) \quad \forall \ j \in [n], k' \geq k \notag \\
&Z_{ji}\left(\frac{(k-1)T}{K}:T\right) = 0 \quad \forall \ j \in [n], i \not\in\mathcal{S}_j\notag \\
&Z_{ji}\left(\frac{(k'-1)T}{K}:\frac{k'T}{K}\right) \in \mathbb{N} \quad \forall \ j \in [n], i \in [m], k' \geq k. \notag 
\end{align}
\end{definition}

\smallskip

Note that though the proxy offline benchmark looks similar to the offline benchmark $V^{\textsc{off}}[\omega]$, the proxy offline benchmark (i) only optimizes from epoch $k$ onwards, and (ii) assumes that the sequence of arrivals for all $k' > k$ is identical to that observed in $k$, i.e.,  that \mbox{$\Lambda_j\left(\frac{(k'-1)T}{K}: \frac{k'T}{K}\right) = \Lambda_j\left(\frac{(k-1)T}{K}: \frac{kT}{K}\right)$}. Let $\mbox{$\widetilde{\textsc{Reg}}_k[\omega_k] = \widetilde{V}^\pi_k\left[\omega_k \mid Z^\pi\left(\frac{(k-1)T}{K}\right)\right] - \widetilde{V}^{\textsc{off}}_k\left[\omega_k \mid Z^\pi\left(\frac{(k-1)T}{K}\right)\right]$}$ be the gap between our algorithm's cumulative proxy costs and the proxy offline benchmark in epoch $k$, which we refer to as the {\it proxy regret} in epoch $k$. For ease of notation, we omit the dependence of $\widetilde{V}_1^\pi[\omega_1 \mid Z^\pi(0)]$ on $Z^\pi(0)$ throughout this section.


We first formalize the relationship between the actual costs incurred by our algorithm and its cumulative proxy costs. Specifically, we reformulate the actual costs incurred as the difference between its cumulative proxy costs in all epochs, and the assignment and deviation costs that would have been incurred under non-implemented decisions in each epoch (the two last terms in the statement of the proposition).

\begin{proposition}\label{lem:alg-costs-to-proxy-costs}
\begin{align*}
V^\pi[\omega] =& \sum_{k\in[K]}\widetilde{V}_k^{\pi}\left[\omega_k \mid Z^\pi\left(\frac{(k-1)T}{K}\right)\right]-\sum_{k\in[K]}\sum_{j\in[n]}\sum_{i\in[m]}c_{ji}\widetilde{Z}_{ji,k}^{\pi}\left(\frac{kT}{K}:T\right) \\&\qquad -\sum_{k\in[K]}\frac{T}{K}\sum_{k'> k}\sum_{i\in[m]} k'g_{k'i}\left(\frac{Z_i^\pi\left(\frac{kT}{K}\right) + \widetilde{Z}_{i,k}^{\pi}\left(\frac{kT}{K}:\frac{k'T}{K}\right)}{k'T/K}\right).
\end{align*}
\end{proposition}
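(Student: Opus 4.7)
The plan is to prove the identity by direct algebraic bookkeeping. The proof rests on the single structural observation that, by the definition of \Cref{alg:multi-stages}, in each period $t$ lying in epoch $k$ we have $x^t = \proxyx{k}{t}$; that is, the proxy assignment for the \emph{current} epoch coincides with the implemented primal assignment. From this it follows immediately that, for every $k \in [K]$ and every $(j,i) \in [n] \times [m]$,
\begin{align*}
\widetilde{Z}_{ji,k}^\pi\!\left(\tfrac{(k-1)T}{K} : T\right) - \widetilde{Z}_{ji,k}^\pi\!\left(\tfrac{kT}{K} : T\right) = \sum_{t \in \mathcal{T}_k} \proxyx{k}{t} \mathds{1}\{j^t = j\} = Z_{ji}^\pi\!\left(\tfrac{kT}{K}\right) - Z_{ji}^\pi\!\left(\tfrac{(k-1)T}{K}\right),
\end{align*}
and likewise (summing over $j$ and looking at cumulative consumption),
\begin{align*}
Z_i^\pi\!\left(\tfrac{(k-1)T}{K}\right) + \widetilde{Z}_{i,k}^\pi\!\left(\tfrac{(k-1)T}{K} : \tfrac{k'T}{K}\right) = Z_i^\pi\!\left(\tfrac{kT}{K}\right) + \widetilde{Z}_{i,k}^\pi\!\left(\tfrac{kT}{K} : \tfrac{k'T}{K}\right) \quad \text{for all } k' \geq k.
\end{align*}

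Armed with these two identities, I would unpack the right-hand side of the claimed equation into its assignment-cost part and its deviation-cost part and treat each separately. For the assignment-cost part, the first identity above combined with a telescoping sum over $k \in [K]$ gives $\sum_k \big(Z_{ji}^\pi(kT/K) - Z_{ji}^\pi((k-1)T/K)\big) = Z_{ji}^\pi(T)$, which recovers the assignment-cost term $\sum_{j,i} c_{ji} Z_{ji}^\pi(T)$ appearing in \eqref{eq:alg-cost}.

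For the deviation-cost part, I would split the inner sum $\sum_{k' \geq k}$ appearing in $\widetilde{V}_k^\pi[\,\cdot\,]$ into the single term with $k' = k$ and the remaining terms with $k' > k$. The second identity above shows that, for $k' > k$, the argument of $g_{k'i}$ in the $\widetilde{V}_k^\pi$ expression exactly matches the argument of $g_{k'i}$ in the last term being subtracted on the right-hand side, so those two sets of terms cancel completely. Applying the same identity with $k' = k$, the remaining $k' = k$ term has argument $Z_i^\pi(kT/K)/(kT/K)$, which, after summing over $k \in [K]$, yields exactly the deviation-cost term $\tfrac{T}{K}\sum_k \sum_i k\, g_{ki}(Z_i^\pi(kT/K)/(kT/K))$ in \eqref{eq:alg-cost}.

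I do not anticipate any real obstacle: the statement is a definitional identity and the entire argument reduces to careful tracking of indices and a one-line telescoping sum. The only point where one must be slightly careful is the cancellation for $k' > k$ in the deviation terms, which, as noted, reduces to the fact that the algorithm implements the proxy assignment associated with the current epoch, so that "replacing $Z_i^\pi((k-1)T/K)$ by $Z_i^\pi(kT/K)$ while simultaneously dropping the in-epoch proxy assignment" is a no-op.
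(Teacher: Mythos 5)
Your proposal is correct and follows essentially the same route as the paper's proof: both hinge on the observation that the implemented assignment coincides with the in-epoch proxy assignment (so that $\widetilde{Z}_{ji,k}^{\pi}\left(\frac{(k-1)T}{K}:\frac{kT}{K}\right) = Z_{ji}^{\pi}\left(\frac{(k-1)T}{K}:\frac{kT}{K}\right)$), and both split the $k'\geq k$ sums in $\widetilde{V}_k^{\pi}$ into the $k'=k$ term, which reconstructs the actual epoch-$k$ costs, and the $k'>k$ terms, which are exactly the subtracted non-implemented costs. The only cosmetic difference is that you verify the identity by simplifying the right-hand side while the paper expands the left-hand sum of proxy costs; the bookkeeping is identical.
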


\begin{proof}{\it Proof of \cref{lem:alg-costs-to-proxy-costs}.}
By construction, $\widetilde{Z}_{ji,k}^\pi\left(\frac{(k-1)T}{K}:\frac{kT}{K}\right) =  Z_{ji}^\pi\left(\frac{(k-1)T}{K}:\frac{kT}{K}\right)$. Using this observation, we re-write the cumulative proxy cost in epoch $k$ as:
\begin{align*}
\widetilde{V}_k^{\pi}\left[\omega_k \mid Z^\pi\left(\frac{(k-1)T}{K}\right)\right] = &\sum_{j \in [n]}\sum_{i\in[m]}c_{ji}\sum_{k'\geq k}\widetilde{Z}_{ji,k}^{\pi}\left(\frac{(k'-1)T}{K}:\frac{k'T}{K}\right)  \\&+ \frac{T}{K}\sum_{k'\geq k}\sum_{i\in[m]} k'g_{k'i}\left(\frac{Z_i^\pi\left(\frac{(k-1)T}{K}\right) + \widetilde{Z}_{i,k}^{\pi}\left(\frac{(k-1)T}{K}:\frac{k'T}{K}\right)}{k'T/K}\right) \\
= &\left(\sum_{j \in [n]}\sum_{i\in[m]}c_{ji}{Z}_{ji}^{\pi}\left(\frac{(k-1)T}{K}:\frac{kT}{K}\right)\right) + \frac{T}{K}\sum_{i\in[m]}kg_{ki}\left(\frac{Z_i^\pi\left(kT/K\right)}{kT/K}\right) \\ &+\left(\sum_{j\in[n]}\sum_{i\in[m]}c_{ji}\widetilde{Z}_{ji,k}^{\pi}\left(\frac{kT}{K}:T\right)\right)  \\
&+ \frac{T}{K}\sum_{k'> k}\sum_{i\in[m]} k'g_{k'i}\left(\frac{Z_i^\pi\left(\frac{kT}{K}\right) + \widetilde{Z}_{i,k}^{\pi}\left(\frac{kT}{K}:\frac{k'T}{K}\right)}{k'T/K}\right).
\end{align*}
Summing over all $k$ and applying the definition of $V^\pi[\omega]$ (see \eqref{eq:alg-cost}) we obtain:
\begin{align}\label{eq:true-vs-proxy}
\sum_{k\in[K]}\widetilde{V}_k^{\pi}\left[\omega_k \mid Z^\pi\left(\frac{(k-1)T}{K}\right)\right] = &V^\pi[\omega] +\left(\sum_{k\in[K]}\sum_{j\in[n]}\sum_{i\in[m]}c_{ji}\widetilde{Z}_{ji,k}^{\pi}\left(\frac{kT}{K}:T\right)\right) \notag \\
&+\sum_{k\in[K]}\frac{T}{K}\sum_{k'> k}\sum_{i\in[m]} k'g_{k'i}\left(\frac{Z_i^\pi\left(\frac{kT}{K}\right) + \widetilde{Z}_{i,k}^{\pi}\left(\frac{kT}{K}:\frac{k'T}{K}\right)}{k'T/K}\right).
\end{align}
Re-arranging, we obtain the result.\hfill\Halmos
\end{proof}

\smallskip

\cref{lem:unimplemented-costs-to-proxy-offline} next bounds the costs incurred by the non-implemented decisions in each epoch as a function of the proxy offline optimum in the {\it subsequent} epoch, and the loss incurred by using the current arrivals as a proxy for future arrivals.
\begin{lemma}\label{lem:unimplemented-costs-to-proxy-offline}
For all $k \leq K-1$, there exists a constant $C > 0$ independent of $k$ such that
\begin{align*}
&\sum_{j \in [n]}\sum_{i\in[m]}c_{ji}\widetilde{Z}_{ji,k}^\pi\left(\frac{kT}{K}:T\right) + \frac{T}{K}\sum_{k'\geq k+1}\sum_{i\in[m]}k'g_{k'i}\left(\frac{Z_i^\pi\left(\frac{kT}{K}\right) + \widetilde{Z}_{i,k}^\pi\left(\frac{kT}{K}:\frac{k'T}{K}\right)}{k'T/K}\right)\\   &\qquad \geq \widetilde{V}^{\textsc{off}}_{k+1}\left[\omega_{k+1} \mid {Z}^\pi\left(\frac{kT}{K}\right)\right] -CK^2\sum_{j\in[n]}\bigg{|}\Lambda_j\left(\frac{kT}{K}:\frac{(k+1)T}{K}\right) - \Lambda_j\left(\frac{(k-1)T}{K}:\frac{kT}{K}\right)\bigg{|}-nCK^2.
\end{align*}
\end{lemma}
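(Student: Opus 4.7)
My plan is to upper bound $\widetilde{V}^{\textsc{off}}_{k+1}[\omega_{k+1} \mid Z^\pi(kT/K)]$ by exhibiting a feasible solution constructed directly from \Cref{alg:multi-stages}'s proxy assignment decisions during epoch $k$. Concretely, I would define
\[Z'_{ji}\!\left(\frac{(k'-1)T}{K}:\frac{k'T}{K}\right) := \sum_{t \in \mathcal{T}_k} \tilde{x}^t_{k'i}\,\mathds{1}\{j^t = j\} \quad \text{for each } k' \geq k+1.\]
A direct computation shows that plugging $Z'$ into the objective of the proxy offline problem yields exactly the left-hand side of the lemma: the assignment term matches by definition of $\widetilde{Z}^\pi_{ji,k}(\cdot)$, and the argument of each $g_{k'i}$ equals $\frac{Z^\pi_i(kT/K) + \widetilde{Z}^\pi_{i,k}(kT/K:k'T/K)}{k'T/K}$. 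The integrality and type-resource compatibility constraints $i \in \mathcal{S}_j$ are automatically inherited from \eqref{eq:proxy-step}.

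The obstacle is that the per-epoch capacity constraint of the proxy offline problem uses $\Lambda_j(kT/K:(k+1)T/K)$, while the proxy decisions were formed from arrivals in epoch $k$, so that $\sum_i Z'_{ji}((k'-1)T/K:k'T/K) \leq \Lambda_j((k-1)T/K:kT/K)$; the excess for each pair $(j,k')$ is therefore at most $D_j^+$, where $D_j := \Lambda_j((k-1)T/K:kT/K) - \Lambda_j(kT/K:(k+1)T/K)$. I would repair $Z'$ by dropping $D_j^+$ excess units (reassigning them to the outside option) for every $(j,k')$ with $D_j > 0$, producing a feasible solution $Z''$.

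The main technical step is bounding the cost gap between $Z''$ and $Z'$. Each dropped unit changes the assignment term by at most $c_{\max}$ in absolute value. For the deviation term, a single drop in epoch $k_0' \geq k+1$ shifts the argument of $g_{k'i}$ by $\pm 1/(k'T/K)$ for every $k' \geq k_0'$, and combining $L$-Lipschitzness of $g_{k'i}$ with the $\frac{k'T}{K}$ prefactor in the objective, each $g_{k'i}$ term changes by at most $L$, giving a total cascading change of at most $L(K-k_0'+1) \leq LK$. Each drop therefore costs $O(K)$, and since the total number of drops is at most $(K-k)\sum_j D_j^+ \leq K\sum_j |D_j|$, the total cost change is $O(K^2 \sum_j |D_j|)$. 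Feasibility of $Z''$ then yields $\widetilde{V}^{\textsc{off}}_{k+1}[\omega_{k+1} \mid Z^\pi(kT/K)] \leq \text{LHS} + CK^2 \sum_j |D_j|$ for some $C$ independent of $k$; the additional $nCK^2$ buffer absorbs lower-order corrections (e.g., at most one residual adjustment per type or clipping of $g_{k'i}$'s arguments to $[0,1]$ at the boundary). The hard part will be the cascading-deviation accounting: one must exploit the exact cancellation between the $k'T/K$ prefactor and the $1/(k'T/K)$ argument shift so that each drop contributes only $O(K)$, not $O(K^2)$, to the deviation cost change, keeping the final $K$-dependence of the error at $K^2$ rather than $K^3$.
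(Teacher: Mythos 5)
Your proposal is correct and follows essentially the same route as the paper: both exhibit a feasible solution to $\widetilde{V}^{\textsc{off}}_{k+1}\left[\omega_{k+1}\mid Z^\pi\left(\frac{kT}{K}\right)\right]$ built from the epoch-$k$ proxy assignments and charge the cost gap to $c_{\max}$ and the Lipschitz constant $L$, with the error governed by $\sum_{j}\left|\Lambda_j\left(\frac{kT}{K}:\frac{(k+1)T}{K}\right)-\Lambda_j\left(\frac{(k-1)T}{K}:\frac{kT}{K}\right)\right|$. The only difference is in the feasibility repair --- the paper proportionally rescales the proxy assignments by the ratio of arrival counts and floors (which is where its $nCK^2$ rounding term comes from), whereas you truncate the excess directly --- and your cascading-deviation accounting, exploiting the cancellation of the $\frac{k'T}{K}$ prefactor against the $\frac{1}{k'T/K}$ argument shift, is exactly the step the paper carries out in \eqref{eq:proxy-dev-under-feas}.
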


\begin{proof}{\it Proof of \cref{lem:unimplemented-costs-to-proxy-offline}.}
Fix $k \leq K-1$. We use the proxy assignments made in epoch $k$ to construct a feasible solution to $\widetilde{V}^{\textsc{off}}_{k+1}\left[\omega_{k+1} \mid {Z}^\pi\left(\frac{kT}{K}\right)\right]$. For all $k' \geq k+1$, define the solution that proxy assigns a type $j$ resource to period $i$ in epoch $k'$ in the same proportion as our algorithm did in epoch $k$, 
i.e.,
\begin{align}\label{eq:feas-sol}
Z_{ji}\left(\frac{(k'-1)T}{K}:\frac{k'T}{K}\right) = \bigg{\lfloor}{\Lambda_j}\left(\frac{kT}{K}:\frac{(k+1)T}{K}\right)\cdot\frac{\widetilde{Z}_{ji,k}^\pi\left(\frac{(k'-1)T}{K}:\frac{k'T}{K}\right)}{\Lambda_j\left(\frac{(k-1)T}{K}:\frac{kT}{K}\right)}\bigg{\rfloor}.
\end{align}
Under this solution, the proxy assignment costs are upper bounded by:
\begin{align}
& \sum_{j \in [n]}\sum_{i\in[m]}c_{ji}\sum_{k' \geq k+1} Z_{ji}\left(\frac{(k'-1)T}{K}:\frac{k'T}{K}\right) \notag \\
 &\leq \left(\sum_{j \in [n]}\sum_{i\in[m]}c_{ji} \sum_{k'\geq k+1} \Lambda_j\left(\frac{kT}{K}:\frac{(k+1)T}{K}\right) \cdot \frac{\widetilde{Z}_{ji,k}^\pi\left(\frac{(k'-1)T}{K}:\frac{k'T}{K}\right)}{\Lambda_j\left(\frac{(k-1)T}{K}:\frac{kT}{K}\right)}\right)+nmc_{\max}(K-k) \label{eq:rounding-error-asg}\\
 &=\sum_{j \in [n]}\sum_{i\in[m]}c_{ji}\widetilde{Z}_{ji,k}^\pi\left(\frac{kT}{K}:T\right) \notag \\
 &\quad+ \sum_{j \in [n]}\sum_{i\in[m]}c_{ji} \sum_{k'\geq k+1} \widetilde{Z}_{ji,k}^\pi\left(\frac{(k'-1)T}{K}:\frac{k'T}{K}\right)\left( \frac{\Lambda_j\left(\frac{kT}{K}:\frac{(k+1)T}{K}\right)}{\Lambda_j\left(\frac{(k-1)T}{K}:\frac{kT}{K}\right)}-1 \right) +nmc_{\max}(K-k)\label{eq:proxy-asg-under-feas} \\
 &\leq \sum_{j \in [n]}\sum_{i\in[m]}c_{ji}\widetilde{Z}_{ji,k}^\pi\left(\frac{kT}{K}:T\right) \notag \\ &\quad + Kmc_{\max}\sum_{j\in[n]}\bigg{|}\Lambda_j\left(\frac{kT}{K}:\frac{(k+1)T}{K}\right) - \Lambda_j\left(\frac{(k-1)T}{K}:\frac{kT}{K}\right)\bigg{|} +nmc_{\max}(K-k),\label{eq:proxy-asg-under-feas-2}
\end{align}
where \eqref{eq:rounding-error-asg} bounds the loss from relaxing the integrality of \eqref{eq:feas-sol}, and \eqref{eq:proxy-asg-under-feas} adds and subtracts $\mbox{$\widetilde{Z}_{ji,k}^\pi\left(\frac{kT}{K}:T\right)=\sum_{k'\geq k+1}\widetilde{Z}_{ji,k}^\pi\left(\frac{(k'-1)T}{K}:\frac{k'T}{K}\right)$}$. Finally, \eqref{eq:proxy-asg-under-feas-2} follows from factoring $\Lambda_j\left(\frac{(k-1)T}{K}:\frac{kT}{K}\right)$ in the denominator, bounding the difference by its absolute value and using that $\frac{\widetilde{Z}^\pi_{ji,k}\left(\frac{(k'-1)T}{K}:\frac{k'T}{K}\right)}{\Lambda_j\left(\frac{(k-1)T}{K}:\frac{kT}{K}\right)} \leq 1$ for $\mbox{$k' \geq k+1$}$ by construction. 

We apply similar arguments to the proxy deviation costs under this solution. In particular, since $g_{k'i}$ is Lipschitz, for all $k'\geq k+1$:
\begin{align}\label{eq:proxy-dev-under-feas}
\frac{k'T}{K}\cdot g_{k'i}\left(\frac{Z_i^\pi\left(\frac{kT}{K}\right) + Z_i\left(\frac{kT}{K}:\frac{k'T}{K}\right)}{k'T/K}\right) &\leq \frac{k'T}{K}\cdot g_{k'i}\left(\frac{Z_i^\pi\left(\frac{kT}{K}\right) + \widetilde{Z}_{i,k}^\pi\left(\frac{kT}{K}:\frac{k'T}{K}\right)}{k'T/K}\right) \notag \\ &\quad+ L\bigg{|}\sum_{j\in[n]}\Lambda_j\left(\frac{kT}{K}:\frac{(k+1)T}{K}\right) \cdot \frac{\sum_{k''= k+1}^{k'}\widetilde{Z}_{ji,k}^\pi\left(\frac{(k''-1)T}{K}:\frac{k''T}{K}\right)}{\Lambda_j\left(\frac{(k-1)T}{K}:\frac{kT}{K}\right)} \notag \\&\qquad\qquad -\sum_{j\in[n]} \sum_{k''= k+1}^{k'}\widetilde{Z}_{ji,k}^\pi\left(\frac{(k''-1)T}{K}:\frac{k''T}{K}\right)\bigg{|} + Ln(k'-k) \notag \\
&= \frac{k'T}{K}\cdot g_{k'i}\left(\frac{Z_i^\pi\left(\frac{kT}{K}\right) + \widetilde{Z}_{i,k}^\pi\left(\frac{kT}{K}:\frac{k'T}{K}\right)}{k'T/K}\right) \notag \\ &\quad+ L(k'-k)\sum_{j\in[n]}\bigg{|}\Lambda_j\left(\frac{kT}{K}:\frac{(k+1)T}{K}\right)-\Lambda_j\left(\frac{(k-1)T}{K}:\frac{kT}{K}\right)\bigg{|} \notag \\
&\quad +Ln(k'-k),
\end{align}
where the extra additive loss of $Ln(k'-k)$ is due to rounding \eqref{eq:feas-sol}.

Summing \eqref{eq:proxy-dev-under-feas} over all $i$ and $k'\geq k+1$, and adding to \eqref{eq:proxy-asg-under-feas-2}, we obtain:
\begin{align}\label{eq:vk-off-bound}
&\widetilde{V}^{\textsc{off}}_{k+1}\left[\omega_{k+1} \mid {Z}^\pi\left(\frac{kT}{K}\right)\right] \notag \\ &\leq \left(\sum_{j \in [n]}\sum_{i\in[m]}c_{ji}\sum_{k' \geq k+1} Z_{ji}\left(\frac{(k'-1)T}{K}:\frac{k'T}{K}\right)\right) + \left(\sum_{k'\geq k+1}\sum_{i\in[m]}\frac{k'T}{K}\cdot g_{k'i}\left(\frac{Z_i^\pi\left(\frac{kT}{K}\right) + Z_i\left(\frac{kT}{K}:\frac{k'T}{K}\right)}{k'T/K}\right)\right)\notag \\
&\leq  \sum_{j \in [n]}\sum_{i\in[m]}c_{ji}\widetilde{Z}_{ji,k}^\pi\left(\frac{kT}{K}:T\right) + \frac{T}{K}\sum_{k'\geq k+1}\sum_{i\in[m]}k'g_{k'i}\left(\frac{Z_i^\pi\left(\frac{kT}{K}\right) + \widetilde{Z}_{i,k}^\pi\left(\frac{kT}{K}:\frac{k'T}{K}\right)}{k'T/K}\right)\notag \\ &\quad +\left(Kmc_{\max}+L K^2m\right)\sum_{j\in[n]}\bigg{|}\Lambda_j\left(\frac{kT}{K}:\frac{(k+1)T}{K}\right) - \Lambda_j\left(\frac{(k-1)T}{K}:\frac{kT}{K}\right)\bigg{|} \notag \\ 
&\quad + nmc_{\max} K + LnmK^2,
\end{align}
where the first inequality follows from the fact that $Z$ is a feasible solution to $\widetilde{V}^{\textsc{off}}_{k+1}\left[\omega_{k+1} \mid {Z}^\pi\left(\frac{kT}{K}\right)\right]$. Letting $C = Lm+mc_{\max}$, we obtain the result. \hfill\Halmos
\end{proof}

\smallskip

\cref{lem:unimplemented-costs-to-proxy-offline} implies that the proxy assignment decisions made in each epoch are approximately optimal for the proxy offline optimum in the next epoch, as long as the composition of arrivals throughout these two epochs does not differ by too much. We show that this holds in expectation below, and use this to bound the total cost of our algorithm as a function of the proxy regret across all epochs.

\begin{lemma}\label{lem:alg-costs-vs-proxy-regret}
$\mathbb{E}\left[V^\pi[\omega]\right] \leq \mathbb{E}\left[\widetilde{V}^{\textsc{off}}_1[\omega_1]\right] + \sum_{k\in[K]}\mathbb{E}\left[\widetilde{\textsc{Reg}}_k[\omega_k]\right] + O\left(K^{5/2}\sqrt{T}\right).
$
\end{lemma}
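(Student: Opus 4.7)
The plan is to combine the exact decomposition in \cref{lem:alg-costs-to-proxy-costs} with the inequality in \cref{lem:unimplemented-costs-to-proxy-offline}, re-index, then take expectations and control the arrival-mismatch terms via a Cauchy--Schwarz/variance bound on the i.i.d.\ arrival process.

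\smallskip

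First, I start from \cref{lem:alg-costs-to-proxy-costs}, which writes $V^\pi[\omega]$ as the sum of the cumulative proxy costs $\widetilde V_k^\pi[\omega_k \mid Z^\pi((k-1)T/K)]$ across epochs, minus the assignment costs of the non-implemented proxy decisions $\sum_{j,i} c_{ji}\widetilde Z_{ji,k}^\pi(kT/K:T)$ and minus the proxy deviation costs for epochs $k' > k$. Note the latter two quantities are exactly the objects lower-bounded in \cref{lem:unimplemented-costs-to-proxy-offline}, and both vanish when $k=K$. Applying that lemma (with its constant $C$) to each epoch $k \leq K-1$ gives
\begin{align*}
V^\pi[\omega] &\le \sum_{k\in[K]} \widetilde V_k^\pi\!\left[\omega_k \mid Z^\pi\!\left(\tfrac{(k-1)T}{K}\right)\right] - \sum_{k=1}^{K-1} \widetilde V_{k+1}^{\textsc{off}}\!\left[\omega_{k+1} \mid Z^\pi\!\left(\tfrac{kT}{K}\right)\right] \\
&\quad + CK^2 \sum_{k=1}^{K-1}\sum_{j\in[n]} \left|\Lambda_j\!\left(\tfrac{kT}{K}:\tfrac{(k+1)T}{K}\right) - \Lambda_j\!\left(\tfrac{(k-1)T}{K}:\tfrac{kT}{K}\right)\right| + nCK^2(K-1).
\end{align*}

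\smallskip

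Second, I reindex the subtracted sum by $k' = k+1$ so it runs over $k' = 2,\dots,K$. Pairing this with the $k \geq 2$ terms in the first sum produces $\sum_{k=2}^K \widetilde{\textsc{Reg}}_k[\omega_k]$, while the $k=1$ term is $\widetilde V_1^\pi[\omega_1] = \widetilde V_1^{\textsc{off}}[\omega_1] + \widetilde{\textsc{Reg}}_1[\omega_1]$. This yields
\begin{align*}
V^\pi[\omega] \le \widetilde V_1^{\textsc{off}}[\omega_1] + \sum_{k\in[K]} \widetilde{\textsc{Reg}}_k[\omega_k] + CK^2 \sum_{k=1}^{K-1}\sum_{j\in[n]} \Delta_{k,j}(\omega) + O(K^3),
\end{align*}
where $\Delta_{k,j}(\omega) := |\Lambda_j(kT/K:(k+1)T/K) - \Lambda_j((k-1)T/K:kT/K)|$.

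\smallskip

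Third, I take expectations. The main step is bounding $\mathbb E[\Delta_{k,j}(\omega)]$. Because arrivals are i.i.d.\ and the two intervals are disjoint and of equal length $T/K$, the variables $\Lambda_j(kT/K:(k+1)T/K)$ and $\Lambda_j((k-1)T/K:kT/K)$ are independent $\text{Binomial}(T/K, p_j)$ random variables. Their difference has mean $0$ and variance $2p_j(1-p_j)T/K$, so by Jensen's inequality
\[
\mathbb E[\Delta_{k,j}(\omega)] \le \sqrt{2 p_j(1-p_j)\, T/K} = O(\sqrt{T/K}).
\]
Summing this over $j \in [n]$ and $k \in [K-1]$ gives $O(nK \sqrt{T/K}) = O(n\sqrt{KT})$, and multiplying by the $CK^2$ prefactor produces $O(nK^{5/2}\sqrt{T})$. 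Since $n$ and $C$ are constants (and $K^3$ is lower order than $K^{5/2}\sqrt{T}$), this collapses into the desired $O(K^{5/2}\sqrt{T})$ remainder and the lemma follows.

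\smallskip

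I expect the only nontrivial step to be the arrival-mismatch bound --- in particular, noticing that the two intervals appearing in \cref{lem:unimplemented-costs-to-proxy-offline} are disjoint and of equal length, so that one can exploit i.i.d.\ stationarity to get a $\sqrt{T/K}$ standard-deviation-type bound rather than a linear-in-$T$ bound. The rest is bookkeeping: decomposing via \cref{lem:alg-costs-to-proxy-costs}, applying \cref{lem:unimplemented-costs-to-proxy-offline} termwise, reindexing to surface the proxy regrets, and absorbing the $O(K^3)$ additive constant into the $O(K^{5/2}\sqrt{T})$ remainder.
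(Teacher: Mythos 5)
Your proposal is correct and follows essentially the same route as the paper: decompose via \cref{lem:alg-costs-to-proxy-costs}, apply \cref{lem:unimplemented-costs-to-proxy-offline} termwise, reindex to surface $\widetilde{V}_1^{\textsc{off}}[\omega_1]$ and the proxy regrets, and bound the expected arrival mismatch by $O(\sqrt{T/K})$ per term. The only cosmetic difference is that you compute the variance of the difference of the two independent Binomial$(T/K,p_j)$ counts directly, whereas the paper centers each count at its common mean and applies the triangle inequality plus Jensen to each deviation separately; both yield the same $O(K^{5/2}\sqrt{T})$ remainder.
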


\begin{proof}{\it Proof of \cref{lem:alg-costs-vs-proxy-regret}.}
Recall, by \cref{lem:alg-costs-to-proxy-costs}:
\begin{align}
V^{\pi}[\omega] =& \sum_{k\in[K]}\widetilde{V}_k^{\pi}\left[\omega_k \mid Z^\pi\left(\frac{(k-1)T}{K}\right)\right]\notag \\& \quad -\left[\sum_{k\in[K]}\sum_{j\in[n]}\sum_{i\in[m]}c_{ji}\widetilde{Z}_{ji,k}^{\pi}\left(\frac{kT}{K}:T\right)
+\sum_{k\in[K]}\frac{T}{K}\sum_{k'> k}\sum_{i\in[m]} k'g_{k'i}\left(\frac{Z_i^\pi\left(\frac{kT}{K}\right) + \widetilde{Z}_{i,k}^{\pi}\left(\frac{kT}{K}:\frac{k'T}{K}\right)}{k'T/K}\right) \right]. \notag
\end{align}
Applying \cref{lem:unimplemented-costs-to-proxy-offline} to the above, we have:
\begin{align}
&V^{\pi}[\omega]\leq \sum_{k\in[K]}\widetilde{V}_k^{\pi}\left[\omega_k \mid Z^\pi\left(\frac{(k-1)T}{K}\right)\right] -\left(\sum_{k \leq K-1} \widetilde{V}^{\textsc{off}}_{k+1}\left[\omega_{k+1} \mid {Z}^\pi\left(\frac{kT}{K}\right)\right]\right) \\ &\hspace{3cm} + CK^2\sum_{k\leq K-1}\bigg{|}\Lambda_j\left(\frac{kT}{K}:\frac{(k+1)T}{K}\right) - \Lambda_j\left(\frac{(k-1)T}{K}:\frac{kT}{K}\right)\bigg{|} +nCK^2 \notag \\
\implies &\underbrace{V^{\pi}[\omega]- \widetilde{V}_1^{\pi}\left[\omega_1 \right] - \sum_{k \geq 2} \widetilde{\textsc{Reg}}_k[\omega_k]}_{(I)} \leq  CK^2\sum_{k\leq K-1}\sum_{j\in[n]}\bigg{|}\Lambda_j\left(\frac{kT}{K}:\frac{(k+1)T}{K}\right) - \Lambda_j\left(\frac{(k-1)T}{K}:\frac{kT}{K}\right)\bigg{|} \notag \\ &\hspace{7cm}+ nCK^2,\label{eq:ub-by-abs-diff}
\end{align}
where  \eqref{eq:ub-by-abs-diff} follows from the definition of $\widetilde{\textsc{Reg}}_k[\omega_k]$ as the difference between the proxy online costs and the proxy offline benchmark for epoch $k$. Taking expectations on both sides, we have:
\begin{align*}
\mathbb{E}[(I)] &\leq CK^2\sum_{k\leq K-1}\sum_{j\in[n]}\mathbb{E}\left[\bigg{|}\Lambda_j\left(\frac{kT}{K}:\frac{(k+1)T}{K}\right) - \Lambda_j\left(\frac{(k-1)T}{K}:\frac{kT}{K}\right)\bigg{|}\right] + nCK^2.
\end{align*}
Let $Y_{jk} = \Lambda_j\left(\frac{(k-1)T}{K}:\frac{kT}{K}\right)$, and observe that $\mathbb{E}[Y_{jk}]=\mathbb{E}[Y_{j,k+1}]$ since $Y_{jk}$ and $Y_{j,k+1}$ are i.i.d. Then,
\begin{align}
\mathbb{E}[(I)] &\leq CK^2 \left(\sum_{k\leq K-1}\sum_{j\in[n]}\mathbb{E}\left[\big{|}Y_{jk}-\mathbb{E}[Y_{jk}]\big{|}\right] + \mathbb{E}\left[\big{|}Y_{j,k+1}-\mathbb{E}[Y_{j,k+1}]\big{|}\right] \right) + nCK^2\notag \\
&\leq  2CK^2 \sum_{k\leq K-1}\sum_{j\in[n]}\sqrt{\mathbb{E}\left[\left(Y_{jk}-\mathbb{E}[Y_{jk}]\right)^2\right]} + nCK^2\label{eq:jensens-step} \\
&=2CK^2 \sum_{k\leq K-1}\sum_{j\in[n]}\sqrt{\text{Var}[Y_{jk}]} + nCK^2,\label{eq:variance-step}
\end{align}
where \eqref{eq:jensens-step} uses the fact that $Y_{jk}$ and $Y_{j,k+1}$ are i.i.d., along with Jensen's inequality. Recalling that arrivals are drawn from a multinomial distribution with constant $p_j$ (with respect to $T$), we have that $\mbox{$\text{Var}[Y_{jk}] = O\left(\frac{T}{K}\right)$}$. Plugging this back into \eqref{eq:variance-step} we obtain our final bound, i.e. 
\begin{align*}
\mathbb{E}\left[V^\pi[\omega]\right] &\leq  \mathbb{E}\left[\widetilde{V}_1^{\pi}\left[\omega_1\right]+ \sum_{k \geq 2} \widetilde{\textsc{Reg}}_k[\omega_k]\right] +O\left(K^{5/2}\sqrt{T}\right) \\
&= \sum_{k\in[K]}\mathbb{E}\left[\widetilde{\textsc{Reg}}_k[\omega_k]\right] + \mathbb{E}\left[\widetilde{V}^{\textsc{off}}_1[\omega_1]\right] + O\left(K^{5/2}\sqrt{T}\right),
\end{align*}
where the final equality follows from adding and subtracting $\mathbb{E}\left[\widetilde{V}^{\textsc{off}}_1[\omega_1]\right]$.
\hfill\Halmos
\end{proof}

Hence, it suffices to bound the proxy online costs incurred in the first epoch and the proxy regret in epochs 2 through $K$. \cref{lem:proxy-cost-epoch-1} uses similar arguments as those above to show that $\mathbb{E}[\widetilde{V}_1^\pi[\omega]]$ is within $O(\sqrt{T})$ of the offline optimum.

\smallskip

\begin{lemma}\label{lem:proxy-cost-epoch-1}
$\mathbb{E}\left[\widetilde{V}_1^\textsc{off}[\omega_1]\right] \leq  \mathbb{E}\left[V^{\textsc{off}}[\omega]\right] + O(K^{3/2}\sqrt{T})$.
\end{lemma}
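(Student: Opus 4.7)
The plan is to establish this bound by constructing a feasible solution to the proxy offline problem $\widetilde{V}_1^{\textsc{off}}[\omega_1]$ from any optimal solution $Z^*$ of the true offline problem $V^{\textsc{off}}[\omega]$, and showing its cost is within $O(K^{3/2}\sqrt{T})$ of $V^{\textsc{off}}[\omega]$ in expectation. The construction would mirror the rescaling argument in the proof of \Cref{lem:unimplemented-costs-to-proxy-offline}: because the proxy offline problem assumes every epoch sees the same composition of arrivals as epoch 1, I would rescale each per-epoch assignment of $Z^*$ by the ratio $\Lambda_j(0:T/K)/\Lambda_j((k'-1)T/K:k'T/K)$. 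Concretely, I would define
\[\widetilde{Z}_{ji}\left(\frac{(k'-1)T}{K}:\frac{k'T}{K}\right) = \left\lfloor \Lambda_j\left(0:\frac{T}{K}\right) \cdot \frac{Z^{*}_{ji}\left(\frac{(k'-1)T}{K}:\frac{k'T}{K}\right)}{\Lambda_j\left(\frac{(k'-1)T}{K}:\frac{k'T}{K}\right)} \right\rfloor\]
for each $j, i, k'$ (taking $0/0 = 0$ when $\Lambda_j((k'-1)T/K:k'T/K)=0$); this is feasible for $\widetilde{V}_1^{\textsc{off}}[\omega_1]$ by construction.

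Next, I would bound the cost of $\widetilde{Z}$ minus that of $Z^*$. Using $Z^{*}_{ji}((k'-1)T/K:k'T/K) \leq \Lambda_j((k'-1)T/K:k'T/K)$, the per-epoch assignment-cost discrepancy is at most $c_{\max}|\Lambda_j(0:T/K) - \Lambda_j((k'-1)T/K:k'T/K)|$ plus a constant rounding term, exactly as in the derivation of \eqref{eq:proxy-asg-under-feas-2}. For the deviation costs, the Lipschitz continuity of $g_{k'i}$ combined with the $k'T/K$ scaling gives a per-$(i,k')$ gap of $L$ times the absolute difference in the \emph{cumulative} assignments through epoch $k'$; summing over $i \in [m]$ and $k' \in [K]$ introduces a factor of $LmK$ on the aggregate quantity $\sum_{k''\in[K]}\sum_j |\Lambda_j(0:T/K) - \Lambda_j((k''-1)T/K:k''T/K)|$.

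Finally, I would take expectations. Letting $Y_{jk'} := \Lambda_j((k'-1)T/K:k'T/K)$, the $\{Y_{jk'}\}_{k'}$ are i.i.d.\ with variance $O(T/K)$ (as in the proof of \Cref{lem:alg-costs-vs-proxy-regret}), so by Jensen's inequality and independence,
\[\mathbb{E}\bigl[|Y_{j1} - Y_{jk'}|\bigr] \leq \sqrt{\mathbb{E}\bigl[(Y_{j1} - Y_{jk'})^2\bigr]} = \sqrt{2\,\text{Var}[Y_{jk'}]} = O(\sqrt{T/K}).\]
Summing over $k' \in [K]$ and $j \in [n]$ yields $O(K\sqrt{T/K}) = O(\sqrt{KT})$, which when multiplied by the $LmK$ prefactor from the deviation-cost step produces the desired $O(K^{3/2}\sqrt{T})$; this dominates both the $O(\sqrt{KT})$ contribution from the assignment-cost gap and the $O(K)$ rounding losses.

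The main obstacle is the double-sum accounting in the deviation-cost step: the inner sum over $k'' \leq k'$ arising from the cumulative (not per-epoch) nature of the running average, combined with the outer sum over $k' \in [K]$, contributes the extra factor of $K$ that distinguishes this bound from the $O(\sqrt{KT})$ contribution of the assignment costs. A secondary subtlety is verifying that the rescaling construction remains feasible in edge cases where $\Lambda_j((k'-1)T/K:k'T/K) = 0$ or $\Lambda_j(0:T/K) < Z^{*,k'}_{ji}$ after rounding; both can be handled by setting $\widetilde{Z}_{ji}^{k'} = 0$ where needed and absorbing the extra cost of at most $c_{\max}|\Lambda_j(0:T/K) - \Lambda_j((k'-1)T/K:k'T/K)|$ into the same aggregate bound.
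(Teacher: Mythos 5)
Your proposal is correct and follows essentially the same route as the paper's proof: the same proportionally rescaled (and floored) feasible solution built from the offline optimum, the same Lipschitz bounds on assignment and deviation costs with the extra factor of $K$ coming from the cumulative double sum, and the same Jensen/variance argument yielding $\mathbb{E}\left[\left|\Lambda_j\left(0:\tfrac{T}{K}\right)-\Lambda_j\left(\tfrac{(k'-1)T}{K}:\tfrac{k'T}{K}\right)\right|\right]=O(\sqrt{T/K})$. No substantive differences to note.
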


\begin{proof}{\it Proof of \cref{lem:proxy-cost-epoch-1}.}
As in the proof of \cref{lem:alg-costs-vs-proxy-regret}, we construct a feasible solution to $\widetilde{V}^\textsc{off}[\omega_1]$ using an optimal solution to $V^{\textsc{off}}[\omega]$. For any such optimal solution, let $Z_{ji}^{\textsc{off}}\left(\frac{(k-1)T}{K}:\frac{kT}{K}\right)$ denote the number of type $j$ arrivals assigned to resource $i$ in epoch $k$. Then, for all $k \in [K]$, define
\begin{align*}
Z_{ji,1}\left(\frac{(k-1)T}{K}:\frac{kT}{K}\right) = \bigg{\lfloor}\Lambda_j\left(T/K\right)\cdot\frac{Z_{ji}^{\textsc{off}}\left(\frac{(k-1)T}{K}:\frac{kT}{K}\right)}{\Lambda_j\left(\frac{(k-1)T}{K}:\frac{kT}{K}\right)}\bigg{\rfloor},
\end{align*}
the solution that proxy assigns the type $j$ arrival to resource $i$ for epoch $k$ in the same proportion as the offline solution. We can similarly bound the proxy assignment costs under this solution:
\begin{align}\label{eq:proxy1-vs-opt-asg}
\sum_{j\in[n]}\sum_{i\in[m]}c_{ji}Z_{ji,1}\left(T\right) \leq \sum_{j\in[n]}\sum_{i\in[m]}c_{ji}Z_{ji}^{\textsc{off}}\left(T\right) + mc_{\max}\sum_{j\in[n]}\sum_{k\in[K]}\bigg{|}\Lambda_j\left(\frac{T}{K}\right)-\Lambda_j\left(\frac{(k-1)T}{K}:\frac{kT}{K}\right)\bigg{|}
+ CK,
\end{align}
for some constant $C > 0$, where the additional $O(K)$ loss is due to rounding the feasible solution, as in the proof of \Cref{lem:alg-costs-vs-proxy-regret}.

For the proxy deviation costs, we similarly have:
\begin{align}\label{eq:proxy1-vs-opt-dev}
\frac{T}{K}\sum_{k\in[K]}\sum_{i\in[m]}kg_{ki}\left(\frac{Z_i\left(kT/K\right)}{kT/K}\right) &\leq \frac{T}{K}\sum_{k\in[K]}\sum_{i\in[m]}kg_{ki}\left(\frac{Z_i^{\textsc{off}}\left(kT/K\right)}{kT/K}\right) \notag \\ &\qquad +Lm\sum_{j\in[n]}\sum_{k\in[K]}\sum_{k'\leq k}\bigg{|}\Lambda_j\left(\frac{T}{K}\right)-\Lambda_j\left(\frac{(k'-1)T}{K}:\frac{k'T}{K}\right)\bigg{|} + CnK^2.
\end{align}
Putting \eqref{eq:proxy1-vs-opt-asg} and \eqref{eq:proxy1-vs-opt-dev} together, taking expectations on both sides, and applying Jensen's inequality, as in \cref{lem:alg-costs-vs-proxy-regret}, we obtain:
\begin{align*}
\mathbb{E}\left[\widetilde{V}_1^{\textsc{off}}[\omega_1]\right] \leq \mathbb{E}\left[V^{\textsc{off}}[\omega]\right] + O(K^{3/2}\sqrt{T}).
\end{align*}
\hfill\Halmos
\end{proof}

\smallskip

We now bound $\mathbb{E}\left[\widetilde{\textsc{Reg}}_k[\omega_k]\right]$, for all $k\in[K]$.
\begin{lemma}\label{lem:balseiro-bound}
For $\eta = \Theta(\sqrt{K/T})$, $\mathbb{E}\left[\widetilde{\textsc{Reg}}_k[\omega_k]\right] = O(\sqrt{T/K})$.
\end{lemma}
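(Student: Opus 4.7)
{\it Proof Proposal.}
The plan is to reduce the analysis of $\widetilde{\textsc{Reg}}_k[\omega_k]$ to a standard primal-dual analysis of dual mirror descent, in the spirit of \citet{balseiro2021regularized}'s treatment of the single-epoch regularized allocation problem. The starting point is to reformulate $\widetilde{V}_k^{\textsc{off}}\left[\omega_k \mid Z^\pi\left(\tfrac{(k-1)T}{K}\right)\right]$ as a convex program with primal variables $x_{k',i}^t$ (one per $k' \geq k$, per $t \in \mathcal{T}_k$, per $i \in [m]$), one-per-period simplex constraints $x_{k'}^t \in \mathcal{X}^t$ (independently for each $k'$), and auxiliary variables $a_{k',i}$ representing the average in-epoch proxy consumption for proxy epoch $k'$. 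Dualizing the equality constraints $a_{k',i} = \frac{K}{T}\sum_{t \in \mathcal{T}_k} x_{k',i}^t$ with multipliers $\mu_{k',i}$ produces a Lagrangian whose inner minimization over $x^t$ decouples across $k'$ and reproduces the proxy assignment step \eqref{eq:proxy-step} exactly, and whose inner minimization over $a$ matches the joint idealized consumption step \eqref{eq:many-stages-aux-problem} exactly. Strong duality holds by convexity of the $g_{k'i}$ together with the obvious Slater-type interior point.

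Given this reformulation, the dual update \eqref{eq:ogd-step} is precisely online subgradient ascent on the Lagrangian, since $a_{k',i}^t - \tilde{x}_{k',i}^t$ is an unbiased stochastic subgradient with respect to $\mu_{k',i}$ at the algorithm's iterate. I would then apply the classical OGD regret bound over the $T/K$ periods of epoch $k$ with step size $\eta = \Theta(\sqrt{K/T})$. Two ingredients are needed here: the subgradients are uniformly bounded in norm (since $a, \tilde x \in [0,1]^m$), and the dual iterates themselves remain in a compact set; the latter follows from the $L$-Lipschitz continuity of each $g_{k'i}$, which bounds the optimal duals by $L$ and, up to a short burn-in, confines the iterates to a bounded neighborhood. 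Together, these yield an $O(\sqrt{T/K})$ bound on the cumulative dual regret against the optimal dual comparator $\mu^*$, possibly with a factor that is polynomial in $K$ and $m$ due to the $(K-k+1)m$ dual coordinates.

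The primal proxy regret bound then follows from a standard chain: weak duality lower-bounds $\widetilde{V}_k^{\textsc{off}}$ by the Lagrangian value at $\mu^*$; the algorithm's cumulative proxy cost $\widetilde{V}_k^\pi$ equals the running Lagrangian values along its dual iterates (by the equivalence noted above); and the OGD bound closes the gap. Taking expectation over the i.i.d. arrivals in $\omega_k$ introduces only $O(\sqrt{T/K})$ concentration loss in the $x^t$-minimization term, which is absorbed into the final bound.

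The main obstacle, and the one novelty relative to \citet{balseiro2021regularized}, is the coupling of the regularizer across proxy epochs in \eqref{eq:many-stages-aux-problem}: rather than a single $g$ acting on a single average, we have $g_{k'i}$ acting on a linear combination of $a_{k,i}, \ldots, a_{k',i}$, producing $O(Km)$ coupled dual coordinates and a more intricate $a$-subproblem. However, since the coupling is \emph{linear} in $a$ and each $g_{k'i}$ remains convex and Lipschitz, the Lagrangian structure and OGD machinery carry through with only polynomial-in-$K$ overhead, which is comfortably absorbed by the $O(K^{5/2}\sqrt{T})$ overall target of \Cref{thm:regret}. \hfill\Halmos
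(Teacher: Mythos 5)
Your proposal is correct and follows essentially the same route as the paper: both rest on the observation that, within epoch $k$, \Cref{alg:multi-stages} is exactly dual (sub)gradient descent for the proxy offline problem in which the $(K-k+1)m$ pairs $(k',i)$ are treated as separate resources and the cross-epoch coupling is absorbed into a single convex, Lipschitz regularizer acting jointly on the $a_{k',i}$. The only difference is packaging --- the paper exhibits this as an explicit instance of the \citet{balseiro2021regularized} problem class and invokes their Theorem 1 as a black box (which supplies the dual-iterate boundedness and complementary-slackness accounting you defer to ``standard machinery''), whereas you unroll the same primal-dual/OGD argument by hand.
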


\begin{proof}{\it Proof of \cref{lem:balseiro-bound}.}
This follows by observing that, in each epoch $k$, \Cref{alg:multi-stages} runs the regularized dual mirror descent algorithm of \citet{balseiro2021regularized} with respect to the proxy offline benchmark $\widetilde{V}_k^{\textsc{off}}\left[\omega_k \ \mid Z^\pi\left(\frac{(k-1)T}{K}\right)\right]$.\footnote{The algorithm in \citet{balseiro2021regularized} uses a generic  mirror descent update for the dual variable. We inherit their regret bound by using the well-known fact that online subgradient descent is a special case of online mirror descent \citep{hazan2016introduction}, and that the $O(\sqrt{T})$ scaling that they obtain for their single-epoch problem is invariant to the specific form of mirror descent used.} 

To see the reduction from our proxy problem to the \citet{balseiro2021regularized} setup, consider an online resource allocation problem over a horizon of length $T/K$, with $(K-k+1)m$ resources, indexed by $(k',i)$, for $k'\geq k$, $i\in[m]$, with budget $b_{k'i} = 1$. For each arrival, the decision-maker makes an assignment decision $x^t = (\tilde{x}^t_{k'i}, k' \geq k, i \in[m])$, where $(\tilde{x}^t_{k'i}, i \in [m]) \in \mathcal{X}^t$ for all $k' \geq k$. For this decision $x^t$, it incurs a bounded linear cost $f^t(x^t)$, defined by:
\[f^t(x^t) =  \sum_{k'\geq k}\sum_{i\in[m]}\tilde{x}_{k'i}^t\left(\sum_{j\in[n]}\mathds{1}\{j=j^t\}c_{ji}\right).\]
There is a single regularizer on the time-average consumption of all $(K-k+1)m$ resources:
\[r(a) = \sum_{k'\geq k}\sum_{i\in[m]}k'g_{k'i}\left(\frac{z_i}{k'T/K} + \frac{\sum_{k''=k}a_{k'',i}}{k'}\right).\]
Note that in this case, since $a \in [0,1]^{(K+1-k)\times m}$, the set of dual variables for which the conjugate of $r$ is bounded is $\mathcal{D} = \mathbb{R}$, and it is easy to see that Assumption 2 in \citet{balseiro2021regularized} holds.\footnote{The idealized consumption problem in the dual mirror descent algorithm in \citet{balseiro2021regularized} optimizes over all $a \leq b$; in contrast, we also impose a nonnegativity constraint on our auxiliary variables. All of the same arguments go through  assuming nonnegativity.} Then, applying a stepsize $\eta = \Theta\left((T/K)^{-1/2}\right)$ to Theorem 1 in \citet{balseiro2021regularized}, we obtain the claim.
\hfill\Halmos
\end{proof}

\smallskip

We conclude by putting these bounds together to obtain our main result.
\begin{proof}{\it Proof of \cref{thm:regret}.}
We have:
\begin{align*}
\mathbb{E}\left[V^\pi[\omega]\right] &\leq \mathbb{E}\left[\widetilde{V}^{\textsc{off}}_1[\omega_1]\right] + \sum_{k\in[K]}\mathbb{E}\left[\widetilde{\textsc{Reg}}_k[\omega_k]\right] + O\left(K^{5/2}\sqrt{T}\right)  \qquad \qquad  &\text{(\cref{lem:alg-costs-vs-proxy-regret})}\\
&\leq \mathbb{E}\left[V^{\textsc{off}}[\omega]\right] + \sum_{k\in[K]}\mathbb{E}\left[\widetilde{\textsc{Reg}}_k[\omega_k]\right] + O\left(K^{5/2}\sqrt{T}\right) + O\left(K^{3/2}\sqrt{T}\right)
 &\text{(\cref{lem:proxy-cost-epoch-1})}\\
&\leq  \mathbb{E}\left[V^{\textsc{off}}[\omega]\right] + O\left(K^{5/2}\sqrt{T}\right) + O\left(2K^{3/2}\sqrt{T}\right)
 &\text{(\cref{lem:balseiro-bound})} \\
 \implies \mathbb{E}[\textsc{Reg}] &= O(K^{5/2}\sqrt{T}).&
\end{align*}
\hfill\Halmos
\end{proof}

{Lastly, we note that an important practical benefit of our algorithm is that it is {\it simple} and {\it easy to implement}, as compared to alternative methods in the dynamic resource allocation literature which require frequently resolving large convex programs that use the previous history of observations, and therefore have large data requirements when $T$ is large. In contrast, to achieve the optimal rate of regret in this setting we only need to solve one small convex problem (\ref{eq:many-stages-aux-problem}) and then use the extremely naive ``prediction'' of arrivals in future epochs that simply copies the current arrival $K$ times.}
 	\section{Computational Experiments}\label{sec:exper}

In this section we perform extensive computational experiments demonstrating the efficacy of our algorithm on both synthetic and real-world datasets. 

\subsection{Results on Synthetic Dataset}\label{sec:synth}

We first study the numerical performance of our algorithm on a synthetic dataset.

\smallskip 

\textbf{Experimental setup.} We consider a setting in which there are $m = 3$ resources (motivated by the real-world dataset we consider in \cref{sec:real}), and $n = 3$ types of arrivals. For each type, we fix the assignment costs at the beginning of the horizon as follows: the assignment cost for the outside option is normalized to 0 for all types; the cost of assigning type $j$ to resource $j$ (its preferred resource) is sampled from the uniform distribution over ${[-1,-2/3]}$; for all resources $i \neq j$ (the non-preferred resources of type $j$), the assignment cost is sampled independently from the uniform distribution over ${[-2/3,0]}$. We similarly generate the arrival probabilities $p$ by sampling $n$ variables $\pi_1,\ldots,\pi_n$ independently from the uniform distribution over ${[0,1]}$, and letting $p_j = \frac{\pi_j}{\sum_{j'\in[n]}\pi_{j'}}$. 

There are $K = 3$ epochs: the first and last epochs are ``off-peak,'' and defined by a network-level base target sampled from a uniform distribution over ${[0.15,0.6]}$, similarly motivated from the historical range of targets in the real-world dataset considered in \cref{sec:real}. Letting $\varrho$ denote the network-level base target, let $\rho_{1i} = \rho_{3i} = \frac{\varrho}{m}$ for each resource $i$. The middle epoch represents a ``peak'' period during which the base network-level target $\varrho$ increases (resp. decreases) by a multiplicative factor (referred to as the {\it impulse level}) $\gamma \in \{0.5,1,1.5,2\}$. As is commonly done in practice, we penalize deviations from the targets using absolute deviation, i.e., $g_{ki}(a) = \delta|a-\rho_{ki}|$ for each epoch $k$ and resource $i$, for $\delta \in \{0.001,0.01,0.1,1\}$. We henceforth refer to $\delta$ as the {\it deviation scalar}. In the real-world dataset we consider below, resources have different targets. Though we assume resource targets are identical here, this setup still models imbalances across resources due to the fact that each type has a preferred resource, and type probabilities are non-uniform. 

\smallskip 

\textbf{Policies.} We consider three policies: \Cref{alg:multi-stages} (referred to as ``alg'' in all plots), as well as two myopic policies that are variants of the primal-dual algorithm in \citet{balseiro2021regularized} that was designed for the single-epoch problem. These latter two policies are myopic in that they treat each epoch as an independent decision-making horizon of length $T/K$, and seek only to minimize the cumulative assignment and deviation costs {\it within} the epoch, without any eye toward future targets. 

Recall, the targets associated with each epoch are with respect to the {\it cumulative} assignment of arrivals to each resource, i.e., they take into account both the assignments in the associated epoch, as well as those in all previous epochs in addition to those in the epoch of interest. Thus, to fully specify a myopic policy we must translate the cumulative targets into {\it epoch-specific} targets that represent the ideal average consumption with respect to only the arrivals in that epoch. This requires the policy to make an assumption about past assignments. We present two practical assumptions, each of which yields a different policy.

Let $\tilde{\rho}_{ki}$ denote the epoch-specific target for epoch $k$ and resource $i$ computed by any myopic policy. The first policy, which we call \textsc{Myopic-Epoch} (\textsc{ME}), computes $\tilde{\rho}_{ki}$ assuming that the running average consumption up until the end epoch of $k-1$ was exactly the target $\rho_{(k-1),i}$. Hence, to achieve a running average target of ${\rho}_{ki}$ in period $k$, the epoch-specific target must satisfy:
\begin{align}\label{eq:myopic}
\rho_{ki} = \frac{\tilde{\rho}_{ki} + (k-1)\rho_{(k-1),i}}{k} \iff \tilde{\rho}_{ki} = k\rho_{ki} - (k-1)\rho_{(k-1),i}.
\end{align}
While the assumption that past targets were adhered to is not lossy for large values of $\delta$, for smaller values of $\delta$ it is likely unrealistic. The second policy we implement, which we call \textsc{Smart Myopic-Epoch} (\textsc{Smart-ME}), is more adaptive, and computes epoch-specific targets given the algorithm's {\it actual} consumption up until the end of epoch $k-1$. In this case, the epoch-specific target must satisfy:
\begin{align}\label{eq:smart-myopic}
\rho_{ki} = \frac{\tilde{\rho}_{ki}}{k} + \frac{\sum_{t \leq (k-1)T/K} x^t_i}{kT/K}
\iff \tilde{\rho}_{ki} = k\rho_{ki} - \frac{\sum_{t\leq (k-1)T/K}x_i^t}{T/K},
\end{align}
where $x^t_i$ represents the algorithm's primal assignment decision in period $t$ to resource $i$.

We refer the reader to Appendix \ref{apx:exper} for complete descriptions of the two myopic policies. For all three policies we use stepsize $\eta = \sqrt{K/T}$. We average the performance of each policy over 100 sample paths, and compare their respective performance to the offline optimum, denoted by \textsc{Off}.

\smallskip 

\textbf{Results.} We first study the dependence of the three policies' regret on the length of the horizon $T$. \Cref{fig:regret} shows that the \textsc{ME} policy has the worst performance of the three, clearly incurring linear regret. The simple adaptive fix that allows it to use past resource consumption --- yielding the \textsc{Smart-ME} policy --- results in significant improvements. Still, our algorithm significantly outperforms both myopic policies as $T$ grows large, with sublinear regret as established by \cref{thm:regret}.

\begin{figure}
  \centering
  \includegraphics[width=0.5\linewidth]{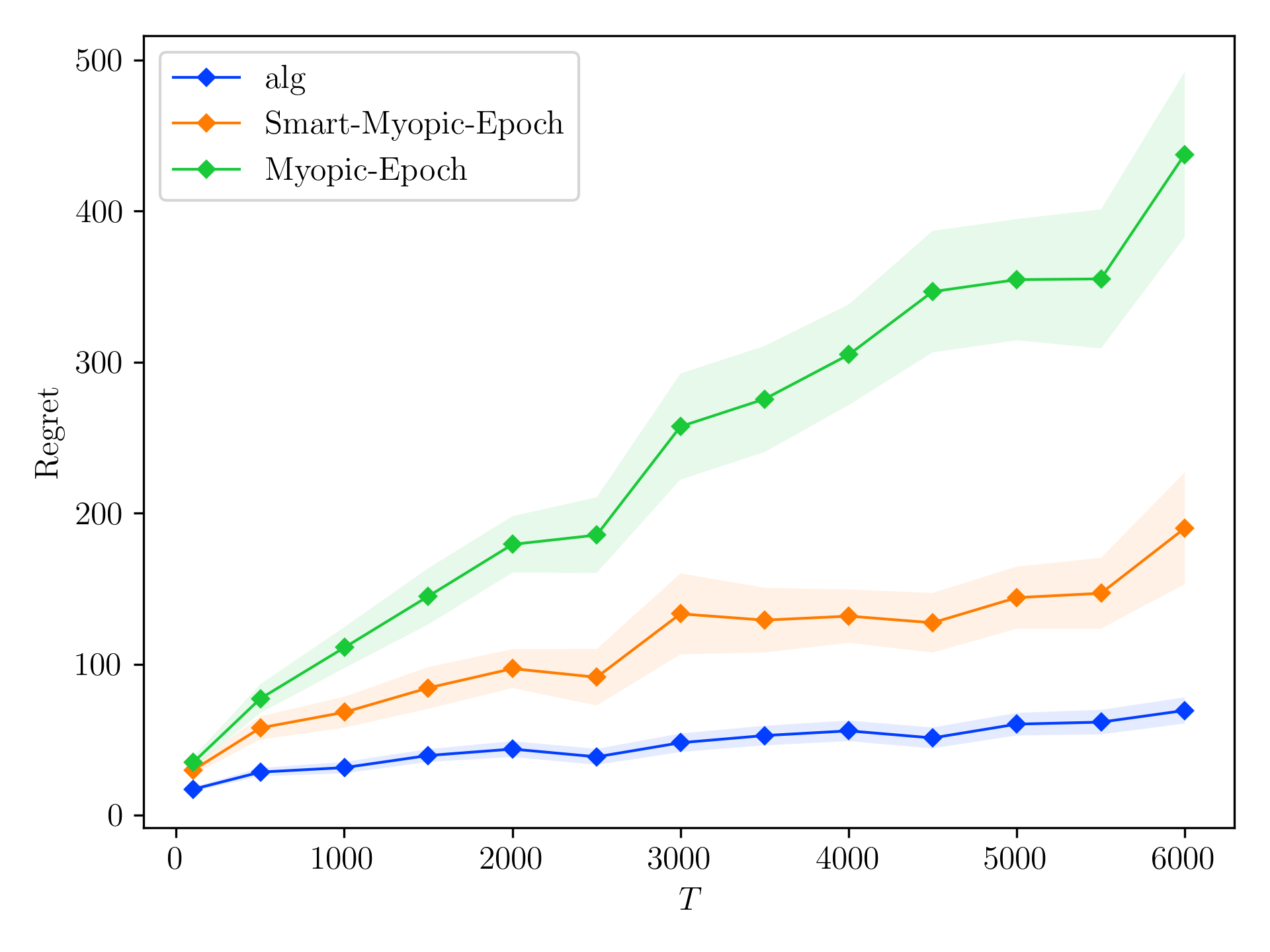}
  \caption{Plot of regret versus $T$ for $\delta = 1$, $\gamma = 2$, and $T \in 3\cdot\left\{\lceil\frac{10^2}{3}\rceil,\lceil\frac{0.5\cdot 10^3}{3}\rceil, \lceil\frac{10^3}{3}\rceil,\lceil\frac{1.5\cdot 10^3}{3}\rceil,\ldots,\lceil\frac{6\cdot 10^3}{3}\rceil\right\}$}
  \label{fig:regret}
\end{figure}

To shed light on the strong performance of \Cref{alg:multi-stages} relative to myopic target-following, we next numerically investigate the impact of the deviation scalar $\delta$ and the impulse level $\gamma$ on the two policies' behavior. For all experiments, we let $T = 501$ and focus solely on \textsc{Smart-ME}.

\Cref{tab:synth-res-total-cost} shows the relative regret of \Cref{alg:multi-stages} and \textsc{Smart-ME}, i.e., $\frac{\mathbb{E}[V^\pi[\omega]-V^{\textsc{off}}[\omega]]}{\mathbb{E}[V^{\textsc{off}}[\omega]]}$, for various values of $\delta$. (We alternatively refer to this as the relative cost difference of these policies.) As more weight is placed on target-following, the total cost relative to the offline optimum increases for both policies. While they perform similarly for small values of $\delta$, for $\delta \geq 0.5$ the difference in performance becomes stark, with \Cref{alg:multi-stages} exceeding the offline optimum costs by just over 6\%, as compared to over 26\% for \textsc{Smart-ME}. This difference only grows as $\delta$ increases, with \textsc{Smart-ME} incurring 2.5 times the cost of the hindsight optimum, as compared to a 70\% relative difference for \Cref{alg:multi-stages}.

\begin{table}
\centering
\begin{tabular}{|c|cc|}
\hline
$\delta$ & \Cref{alg:multi-stages} &  \textsc{Smart-ME} \\
\hline
0.001 & 0.06 & 0.05 \\
0.01 & 0.05 & 0.05 \\
0.1 & 0.23 & 0.44 \\
0.5 & 6.52 & 26.47 \\
1 & 23.63 & 50.74 \\
1.5 & 70.61 & 149.80 \\
\hline
\end{tabular}
\caption{Relative cost difference (in \%) versus \textsc{OFF}, for $\delta \in \{0.001, 0.01, 0.1, 0.5, 1, 1.5\}$, $\gamma = 2$, $T = 501$.}
\label{tab:synth-res-total-cost}
\end{table}

\Cref{fig:pareto-frontier} helps to explain this divergence as $\delta$ increases. When little weight is placed on target-following ($\delta \leq 0.1$), the hindsight optimum (represented by the green curve) greedily assigns arrivals to the resource with the minimum assignment cost. In the limit where $\delta = 0$, the dual variables associated with each resource and epoch converge to zero, yielding a greedy assignment. For $\delta=0.2$, the offline optimum's normalized per-period assignment cost steeply increases to over 0.2, with the mean absolute deviation dropping below 0.125. As $\delta$ increases further, the per-period assignment cost significantly increases in favor of minimizing target deviations, plateauing past $\delta = 1$. Note that, for $\gamma = 2$, perfect target-following is not always feasible, hence the average target deviation not converging to 0. 

\Cref{fig:pareto-frontier} shows the mean absolute deviation from hourly targets versus the assignment cost per period for the two policies, as well as \textsc{Off}, when $\gamma = 2$. It helps to explain why the myopic policy performs poorly for large deviation scalars, as it converges to an average target deviation of 0.06. As illustrated by \cref{prop:myopic-is-bad}, in attempting to hit the target in each epoch myopically, it converges to consumption paths that are globally suboptimal. The hindsight optimal solution, on the other hand, is cognizant of this difficulty and is able to achieve smaller average deviation as a result. \Cref{alg:multi-stages} lies between the two, consistently incurring higher assignment costs in order to achieve lower deviation costs. 

\begin{figure}
  \centering
  \includegraphics[width=0.5\linewidth]{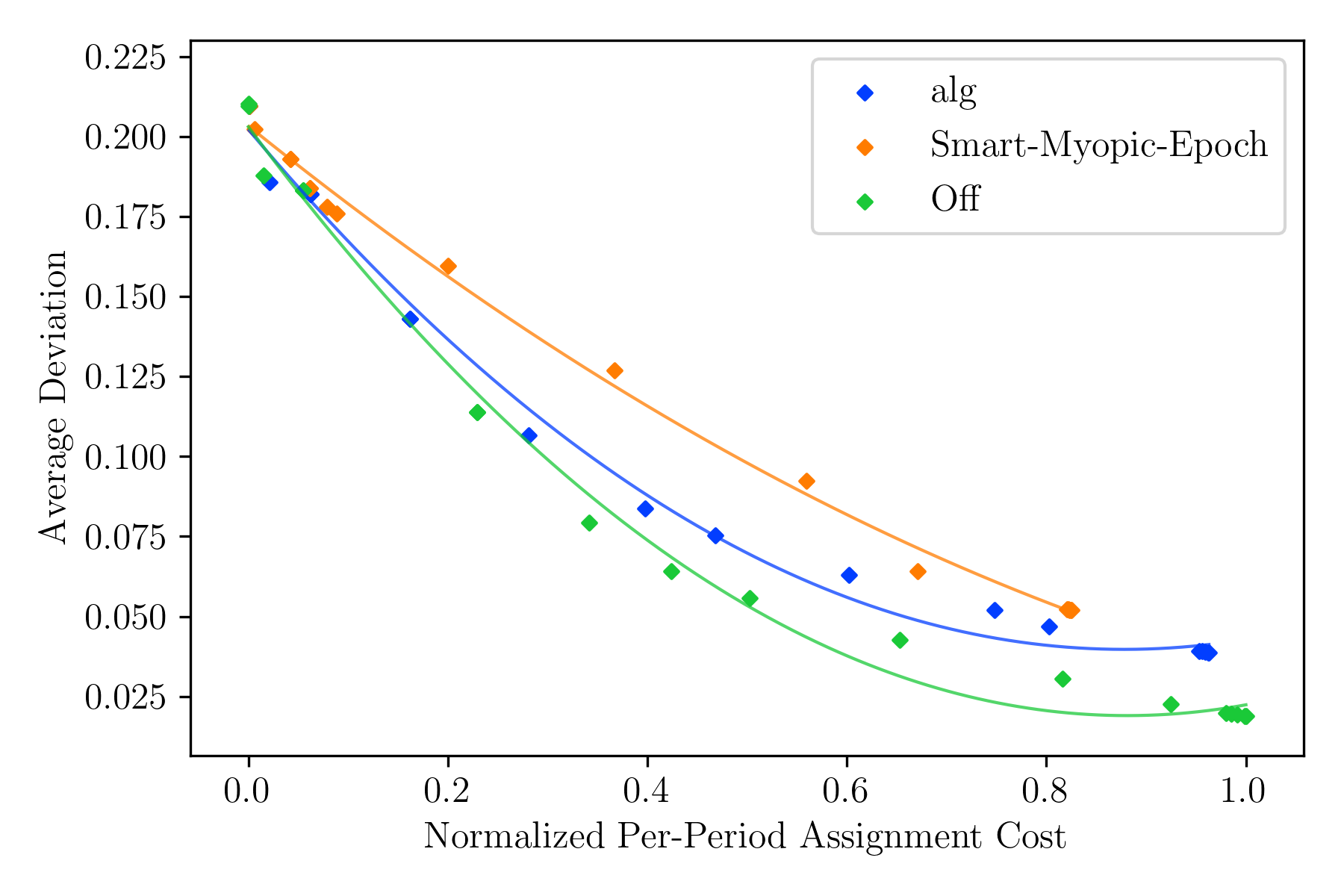}
  \caption{Mean absolute deviation from target versus assignment cost per period, for $\gamma = 2$. Dots from left to right correspond to deviation scalars $\delta = 0.001,0.01,0.1,0.2,\ldots,1.5$, respectively.
  }
  \label{fig:pareto-frontier}
\end{figure}

Finally, \cref{tab:gamma-dependence} shows the dependence of both policies' performance on the impulse level $\gamma$, for $\delta = 1$. For moderate values of $\gamma$ (i.e., $\gamma \in \{1,1.5,2\}$), \cref{tab:impulse-dep} shows that the costs incurred by \Cref{alg:multi-stages} relative to \textsc{Off} are insensitive to impulse level. Our policy's performance degrades to 1.4 times that of the offline optimum for the two extreme values of $\gamma \in \{0.5,2.5\}$, as the variability in targets makes it significantly more difficult to hit them simultaneously. The myopic policy, on the other hand, is significantly more sensitive to the impulse level for all values of $\gamma$. For $\gamma\in \{0.5,2.5\}$, its costs are over 85\% higher than that of the hindsight optimum; in between, the relative cost difference drops from 61\% to 50\%. It is particularly surprising that \textsc{Smart-ME} performs {\it worse} when $\gamma = 1$ (stable target), than  when $\gamma \in \{1.5,2\}$. \cref{tab:impulse-dep-target-dev} shows that, despite the fact that \textsc{Smart-ME} is {\it able} to hit all three targets when $\gamma = 1$, on average it deviates from the targets by 0.05. This is due to the fact that, for the shorter horizon that the myopic policy operates on ($T/K$ as opposed to $T$), target deviation {\it is} in fact optimal, in favor of low assignment costs, since it ignores the repercussions of deviating on future epochs. This behavior remains the same as $\gamma$ increases; \textsc{Off}, on the other hand, incurs higher deviation costs as $\gamma$ increases, hence the improved {relative} performance of the myopic policy.

Overall, these results underscore the importance of taking into account future targets when making the primal assignment decision. Indeed, these demonstrate that even careful modifications of state-of-the-art algorithms for the single epoch problem fail to outperform our algorithm due to their myopic nature.


\begin{table}
    \centering
    \begin{subtable}{0.45\textwidth}
        \centering
        \begin{tabular}{|c|cc|}
        \hline
        $\gamma$ & \Cref{alg:multi-stages} & \textsc{Smart-ME} \\
        \hline
        0.5 & 39.5 & 86.2 \\
        1 & 22.3 & 61.6 \\
        1.5 & 24.3 & 52.9 \\
        2 & 23.6 & 50.7 \\
        2.5 & 39.5 & 85 \\
        \hline
        \end{tabular}
        \caption{\centering{Relative cost difference (in \%) versus \textsc{Off}}}
        \label{tab:impulse-dep}
    \end{subtable}
    \quad
    \begin{subtable}{0.45\textwidth}
        \centering
        \begin{tabular}{|c|ccc|}
        \hline
        $\gamma$ & \textsc{Off} & \Cref{alg:multi-stages} & \textsc{Smart-ME} \\
        \hline
        0.5 & 0.001 & 0.033 & 0.068 \\
        1 & 0 & 0.023 & 0.055 \\
        1.5 & 0.002 & 0.029 & 0.054 \\
        2 & 0.023 & 0.047 & 0.064 \\
        2.5 & 0.044 & 0.067 & 0.081 \\
        \hline
        \end{tabular}
        \caption{\centering{Mean absolute deviation from target}}
        \label{tab:impulse-dep-target-dev}
    \end{subtable}
    \caption{Policy dependence on $\gamma \in \{0.5,1,1.5,2,2.5\}$, $\delta = 1$, $T = 501$.}\label{tab:gamma-dependence}
\end{table}

\subsection{Results on Real-World Warehouse Case-Breaking Data}\label{sec:real}

{We next apply our algorithm to the case-breaking problem described in the introduction, using real-world data from an online retailer. To manage the trade-off between the financial costs associated with case-breaking (i.e., assignment costs) and the deviation from hourly targets, the retailer currently uses a myopic dual price-based control, similar to \textsc{Smart-ME}. In this section, we illustrate the potential improvement from using a forward-looking policy based on our proxy assignment algorithm.}

\smallskip 

\textbf{Dataset description.} We obtained two datasets, each associated with a warehouse in the United States. (Data are shifted and scaled randomly for anonymity.) Each dataset comprises historical shipment case-breaking data over a period of 14 days. There are $m = 3$ resources capable of breaking a shipment, in addition to the ``no-break'' outside option. Each row in a dataset is defined by a 3-tuple $(t,i,q_i^*(t))$, where $t$ corresponds to the start of a two-minute interval on a given day (e.g., 2:34am on April 17, 2024), $i$ is one of the three resources or the outside option, and $q_i^*(t)$ is referred to as the {\it ideal quantity}. This quantity corresponds to the minimum cost assignment of units to resource $i$ during the two-minute interval starting at time $t$, absent target considerations. Formally, using our notation, $q_i^*(t)$ is defined as:
\begin{align}\label{eq:ideal-qty}
q^*_{i}(t) = \sum_{j\in[n]} \Lambda_j(t:t+2)\mathds{1}\left\{c_{ji} \in \arg\min_{i'\in[m] \cup \emptyset}c_{ji'}\right\},
\end{align}
with ties broken arbitrarily. These ideal quantities are tracked in order to compare assignment decisions to a ``first-best'' scenario in which resources are unconstrained. While the total costs under this unconstrained scenario are unachievable, they are an important benchmark to quantify the cost of limited capacity. We highlight that we only observe aggregate assignment quantities in our dataset, as opposed to the actual assignment costs for each arriving shipment and each resource.  Hence, we will have to infer the underlying assignment cost distribution from this censored data; we describe this process further below. 

Finally, for each hour in the day, our dataset includes the planned number of cases assigned to each resource, from the start of the day to the end of the hour. These can be interpreted as ``raw'' hourly targets. We let each hour of the day correspond to an epoch, and translate the raw targets to proportional targets, by dividing each of these by the cumulative number of shipments that arrived at the warehouse up until the end each hour. Formally, letting $P_{ki}$ denote the cumulative raw hourly target for resource $i$ by the end of hour $k$, we define the target $\rho_{ki} = P_{ki}/\Lambda(0:t_k)$, where we use $t=0$ to refer to the start of the day, and $t=t_k$ to denote the time interval corresponding to the end of hour $k$. 

We plot these post-processed targets for one cross-dock in \Cref{fig:hourly-targets}. 
\Cref{fig:hourly-targets} illustrates that (i) inter-day target variability is significant (\cref{fig:hourly-targets-horizon}), and (ii) there exist days for which targets vary significantly throughout the day (\cref{fig:hourly-targets-one-day}). For instance, in \Cref{fig:hourly-targets-one-day} the consumption target for Resource 1 exceeds 20\% of arrivals at the beginning of the day; by the middle of the day, however, this drops below 5\%. Moreover, changes in targets need not be smooth: target consumption for Resource 1 doubles hour-to-hour between hours 14 and 16 on this day. This suggests that myopic target tracking within a single day may result in volatile assignment behavior that fails to recognize the importance of future changing targets. 

\begin{figure}[h]
    \centering
    \begin{subfigure}[b]{0.47\textwidth}
        \centering
        \includegraphics[width=\textwidth]{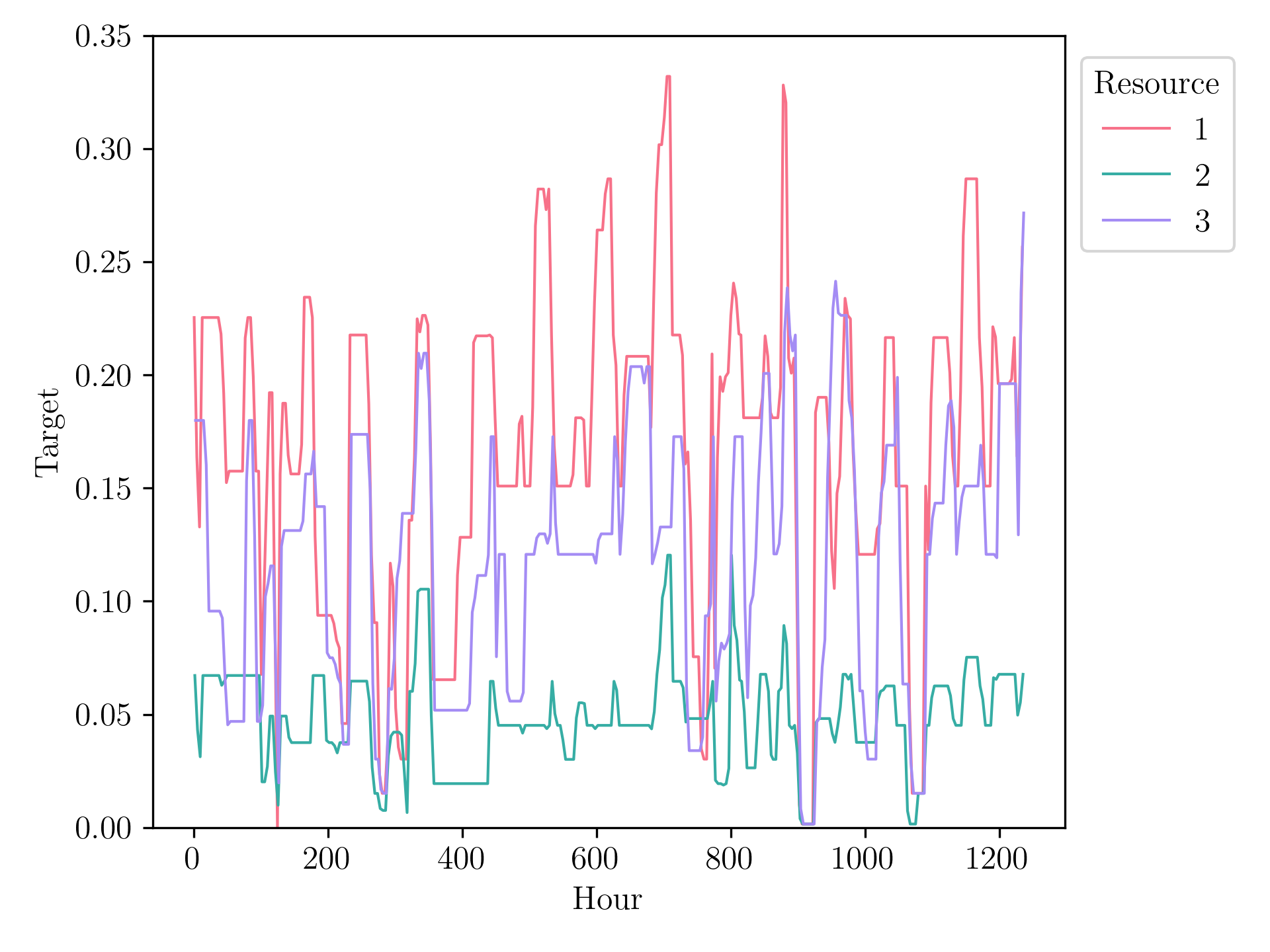}
        \caption{}\label{fig:hourly-targets-horizon}
    \end{subfigure}
    \hfill
    \begin{subfigure}[b]{0.47\textwidth}
        \centering
        \includegraphics[width=\textwidth]{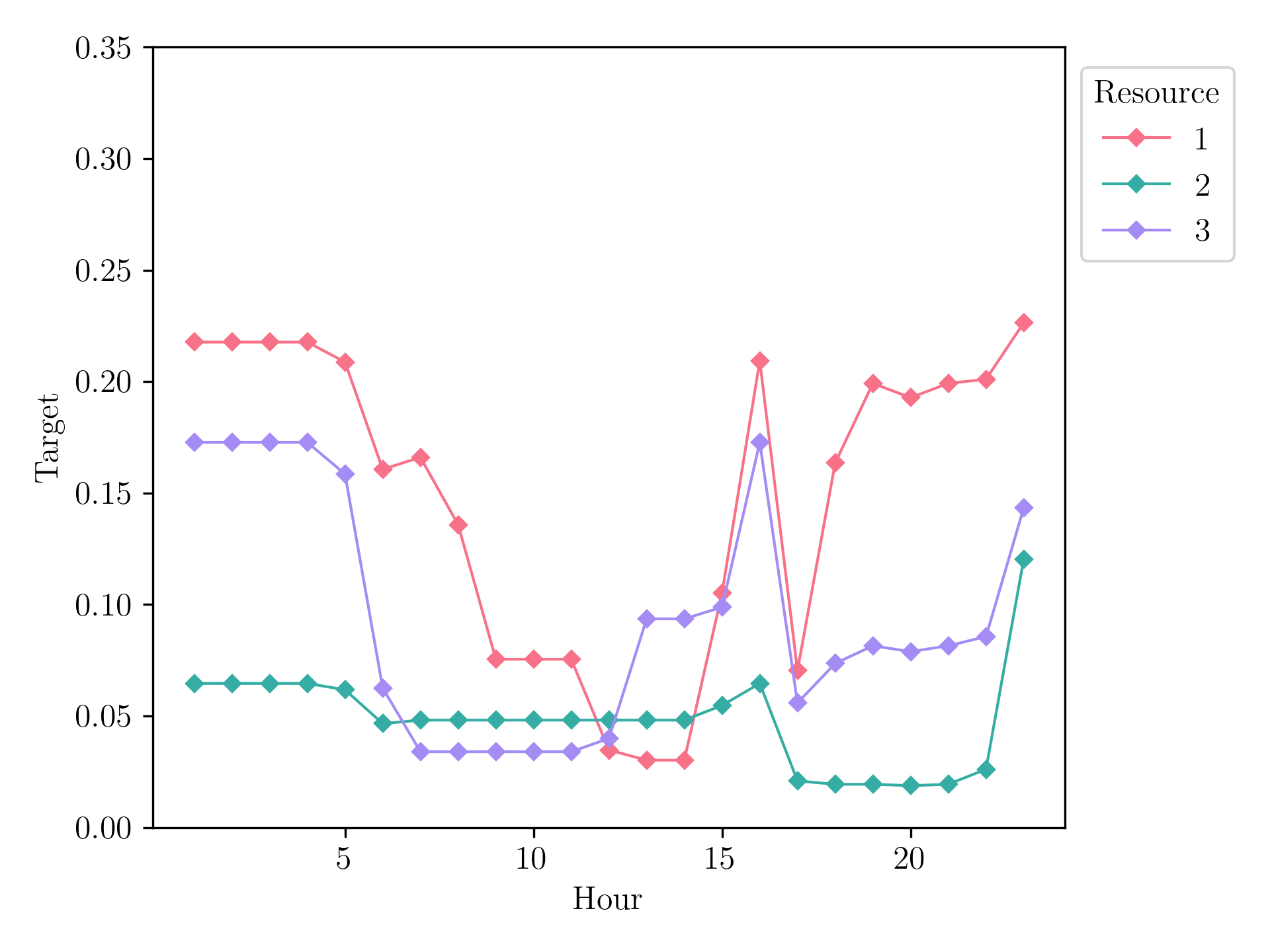}
        \caption{}\label{fig:hourly-targets-one-day}
    \end{subfigure}
    \caption{Hourly targets for Warehouse 1 throughout the two-week horizon (\cref{fig:hourly-targets-horizon}), and during a single day (\cref{fig:hourly-targets-one-day}).}\label{fig:hourly-targets}
\end{figure}


\smallskip

\textbf{Experimental setup.} For each dataset, we consider each of the 14 days to be a separate instance, with each hour of the day representing a different epoch. We let $T$ be the number of cases that arrived that day (rounded down so that $T$ is a multiple of the number of epochs $K$). On average, the facilities each received on the order of $10^4$ cases per day. As in \cref{sec:synth}, we let the deviation cost function $g_{ki}(a) = \delta|a-\rho_{ki}|$ for each epoch $k$ and resource $i$, with $\delta \in \{0.001,0.01,0.1,1\}$.

We estimate a separate assignment cost distribution for each facility. To do so, we assume that the number of cases arriving in a two-minute period has a Poisson distribution with parameter $\lambda$, where $\lambda$ is the average number of cases in each two-minute interval in the dataset. We moreover assume that there are three types of shipments, corresponding to the idea that each resource has a shipment type to which it is best-suited. 

{Recall, our dataset only contains {\it aggregate} assignment information for each two-minute interval. As a result, we do not observe historical assignment costs for arriving case. We do observe, however, the ideal quantity reported in each two-minute interval for each resource. We will use this information to estimate the type distribution via maximum likelihood estimation (MLE). Due to the non-differentiability of the indicator function in \eqref{eq:ideal-qty}, we instead approximate $q_i^*(t)$ via a soft-min, i.e.,:
\begin{align}\label{eq:soft-min}
q^*_{i}(t) = \sum_{j\in[n]} \Lambda_{j}(t:t+2) \cdot \frac{e^{-c_{ji}}}{1+\sum_{i' \in [m]}e^{-c_{ji'}}}.
\end{align}
This naturally lends itself to the assumption that each type $j$ case has assignment cost for resource $i$ drawn from a $\textup{Gumbel}(c_{ji},1)$ distribution, and assignment cost for the outside option drawn from a  $\textup{Gumbel}(0,1)$ distribution.\footnote{While assignment costs drawn from a Gumbel distribution violates the finite-support assumption required for our theoretical results, relaxing the cost distribution to have infinite support highlights the robustness of our algorithm to such practical approximations.
} It moreover allows us to write the likelihood function associated with the ideal quantities in closed form. Once we have this, we estimate the probabilities of each type, along with their respective location parameters (12 parameters total) via MLE. In both cases, the maximum likelihood estimation returned a single type, with location parameters displayed in \Cref{tab:mle-results}. We refer the reader to Appendix \ref{apx:mle} for details on the estimation procedure.
}

\begin{table}
    \centering
        \begin{tabular}{|c|ccc|}
        \hline
        & Resource 1 & Resource 2 &  Resource 3 \\
        \hline
        Warehouse 1&-0.33 & 1.27  & 0.21\\
        Warehouse 2&-0.74 & 1.12 & -0.001\\
        \hline
        \end{tabular}
        \caption{Maximum likelihood estimates of assignment cost parameters for each warehouse. The mean absolute deviation between the actual and ideal predicted fraction of cases under these estimates is 0.12 for Warehouse 1, and 0.1 for Warehouse 2.}\label{tab:mle-results}
\end{table}

For each warehouse, and each of the 14 days in the associated dataset, we randomly generated 25 sequences of $T$ arrivals according to the estimated assignment cost distribution, and compared the average performance of \Cref{alg:multi-stages} and \textsc{Smart-ME} across all 350 replications. We present results for one of the facilities below. Results for the second facility yielded similar insights; as such we relegate them to Appendix \ref{apx:clt2}.

\smallskip 

\textbf{Results.}  \cref{tab:real-res-total-cost} and \Cref{fig:real-pareto-frontier} respectively show the percentage gap between the two policies of interest and the offline optimum, for $\delta \in \{0.001,0.01,0.1,1\}$, as well as the trade-off between target deviation and assignment cost for each of these solutions. For $\delta \in \{0.001,0.01\}$, assignment costs overwhelm deviation cost considerations, with the optimal average target deviation close to 0.14; hence, it is easy for both policies to greedily assign arrivals to the minimum-cost resource and yield strong performance relative to the offline optimum. At $\delta = 0.1$, however, deviation costs are much more significant, with the optimal average target deviation below 0.04. \Cref{alg:multi-stages} is able to achieve close to this target deviation, at slightly lower assignment cost. The myopic policy's performance, however, deteriorates significantly at $\delta = 0.1$, incurring a 66\% higher cost than \textsc{Off}. \Cref{fig:real-pareto-frontier} shows that it is heavily biased toward low assignment costs, and maintains an average target deviation exceeding 0.1. At $\delta = 1$, the average target deviation lies below 0.01; \Cref{alg:multi-stages} is able to remain close to this, at just above 0.01, with near identical assignment costs. Since target deviations are 10 times as costly as when $\delta = 0.1$, we observe a decrease in performance, from 1.21\% to 9.13\% relative to the offline optimum. For this largest value of $\delta$, the myopic policy incurs over 5 times the cost of the offline optimum. This is due to the fact that its mean absolute deviation from the hourly target is around 0.05, despite incurring less than half the assignment costs per period.

\begin{table}
\centering
\begin{tabular}{|c|cc|}
\hline
$\delta$ & \Cref{alg:multi-stages} &  \textsc{Smart-ME} \\
\hline
0.001 & 0.021  & 0.016 \\
0.01 & 0.075 &  0.652 \\
0.1 & 1.21 & 66.31 \\
1 & 9.13 & 481.85 \\
\hline
\end{tabular}
\caption{Relative cost difference (in \%) versus \textsc{OFF}, for $\delta \in \{0.001, 0.01, 0.1, 1\}$}
\label{tab:real-res-total-cost}
\end{table}

\begin{figure}
  \centering
  \includegraphics[width=0.5\linewidth]{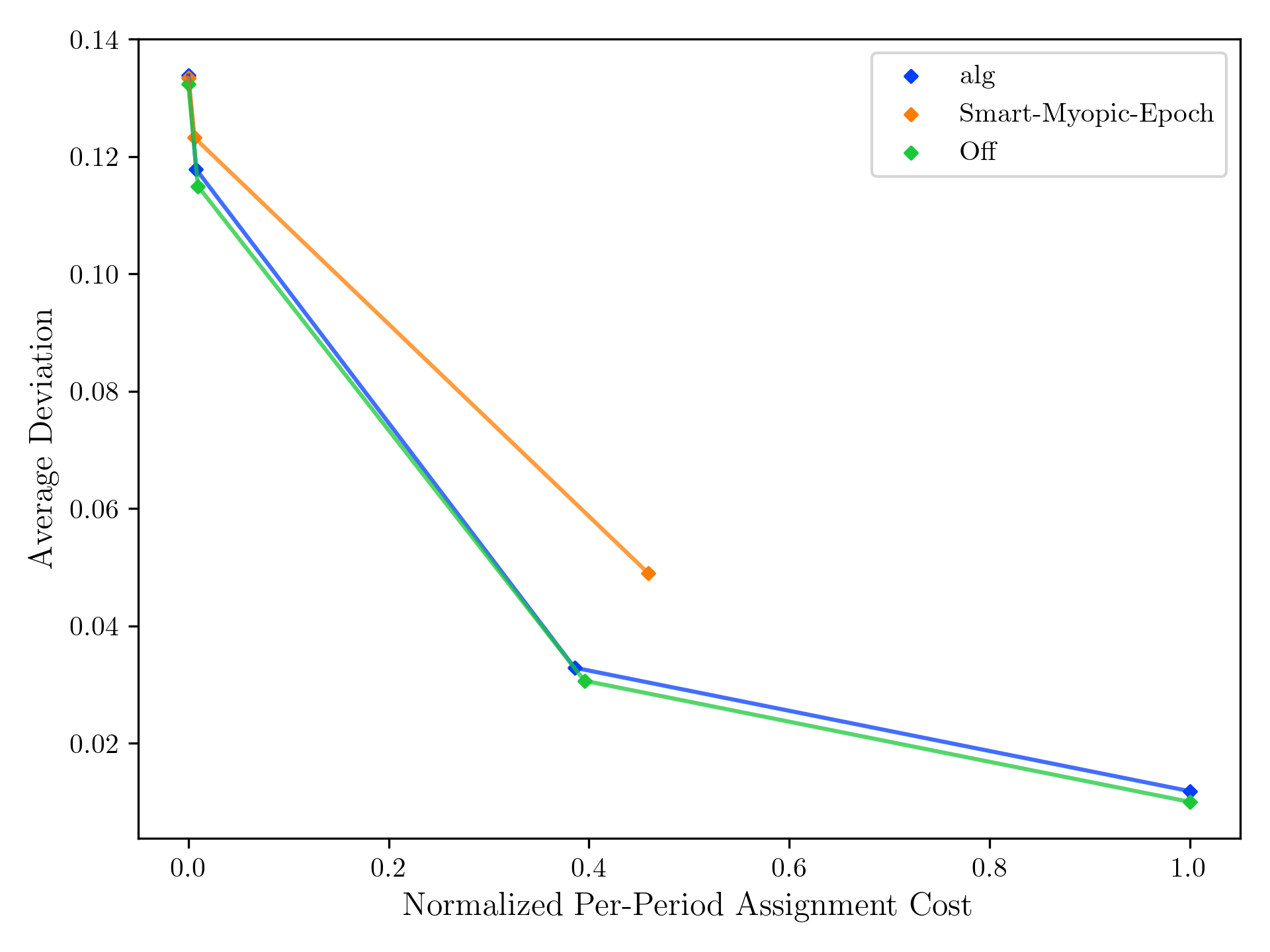}
  \caption{Mean absolute deviation from target versus normalized assignment cost per period for \textsc{OFF}, \Cref{alg:multi-stages} and \textsc{Smart-ME}. Dots from left to right correspond to deviation scalars $\delta = 0.001,0.01,0.1,1$, respectively.
  }
  \label{fig:real-pareto-frontier}
\end{figure}

We conclude by showing our policy's strong performance across {\it all} instances in the dataset for $\mbox{$\delta \geq 0.01$}$, as shown in \Cref{fig:daily-cost-diff}. (As discussed above, our policy slightly underperforms the myopic policy for $\delta = 0.001$, though both achieve near-optimal performance relative to the offline optimum.) Moreover, the myopic policy's performance is highly variable across instances, as it becomes more important to track the hourly targets. This is most obvious for $\delta = 1$; in the best case the myopic policy incurs less than twice the cost of the offline optimum, whereas on one of the days it incurred over 36 times the hindsight optimal cost. \Cref{fig:real-bad-instance} shows the hourly targets throughout this bad instance, as well as the running average consumption under \textsc{Smart-ME}. Despite the magnitude of the deviation scalar, the myopic policy deviates from the hourly targets by over 20 percentage points in the worst case, for Resource 1, and 10 percentage points in the worst case for Resource 3. This is due to the fact that Resources 1 and 3 have significantly lower average assignment costs than Resource 2; hence, for a single epoch it is still optimal to deviate from the target, despite the large deviation scalar. Our policy, on the other hand, takes into account future targets (and the fact that deviations in current epochs will accumulate penalties for the rest of the horizon), and as a result tracks the targets much more closely across all resources.

\begin{figure}[h]
    \centering
    \begin{subfigure}[b]{0.47\textwidth}
        \centering
        \includegraphics[width=\textwidth]{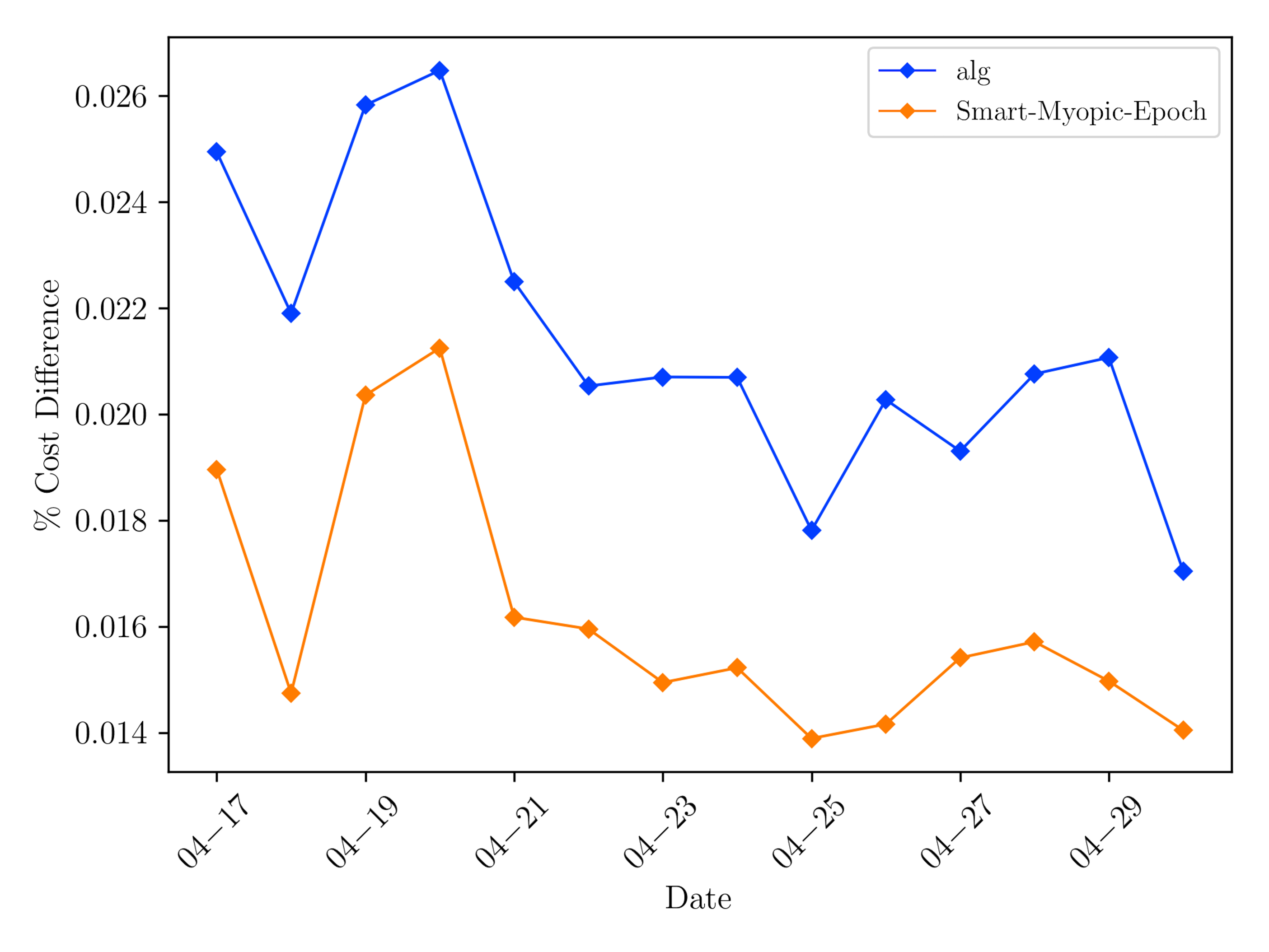}
        \caption{$\delta = 0.001$}
    \end{subfigure}
    \hfill
    \begin{subfigure}[b]{0.47\textwidth}
        \centering
        \includegraphics[width=\textwidth]{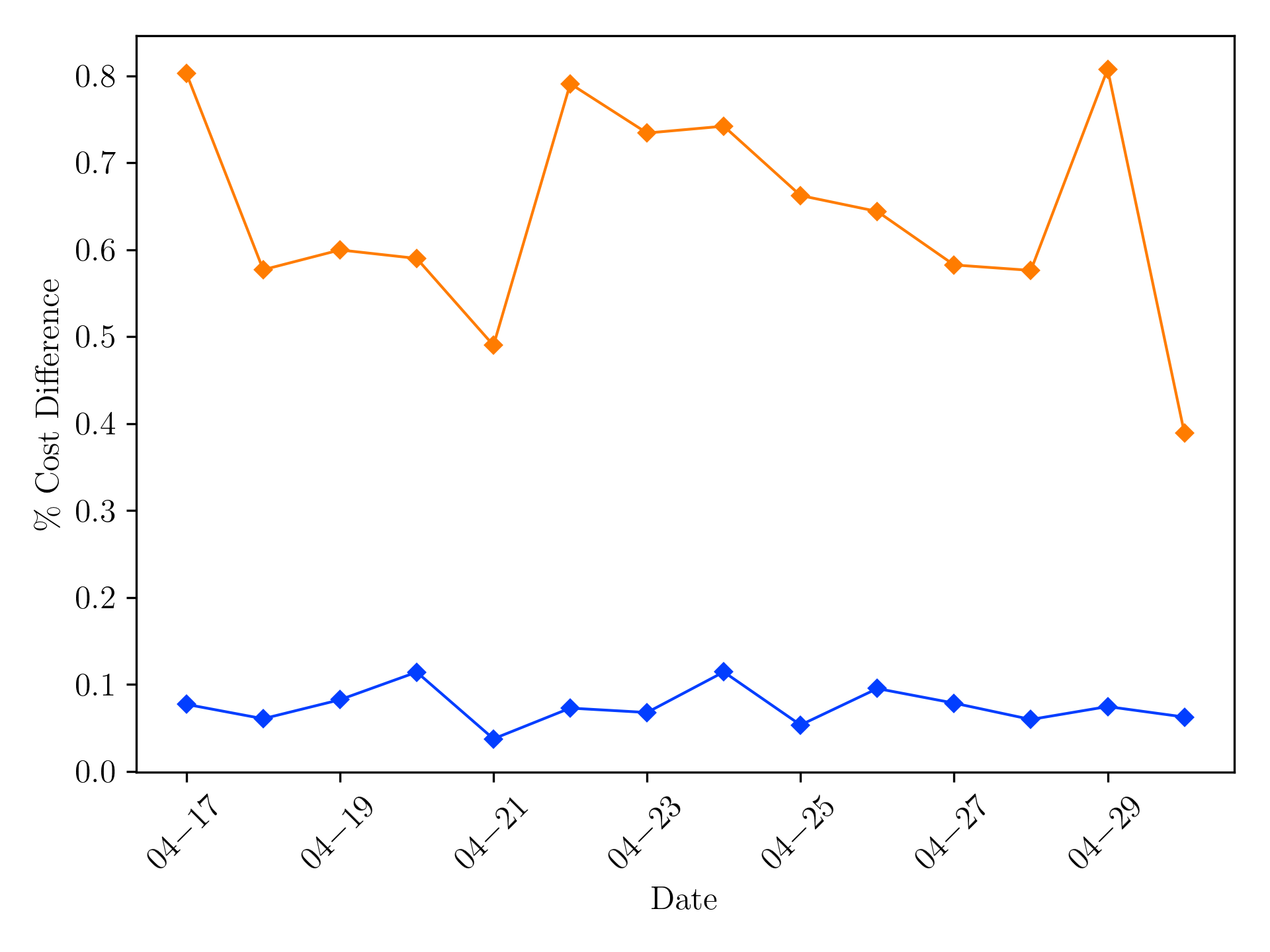}
        \caption{$\delta = 0.01$}
    \end{subfigure}
    \vfill
    \begin{subfigure}[b]{0.47\textwidth}
        \centering
        \includegraphics[width=\textwidth]{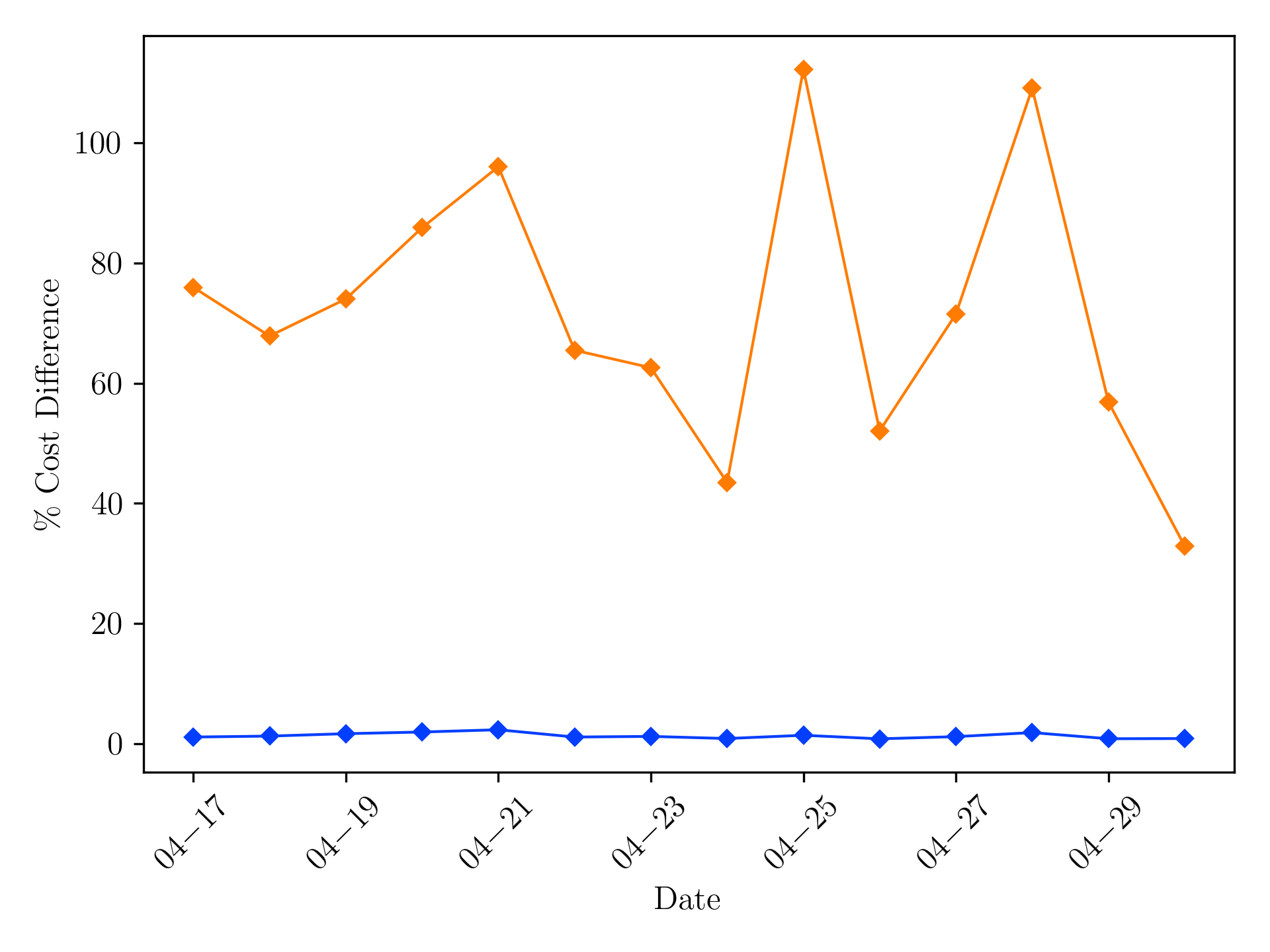}
        \caption{$\delta = 0.1$}
    \end{subfigure}
    \hfill
    \begin{subfigure}[b]{0.47\textwidth}
        \centering
        \includegraphics[width=\textwidth]{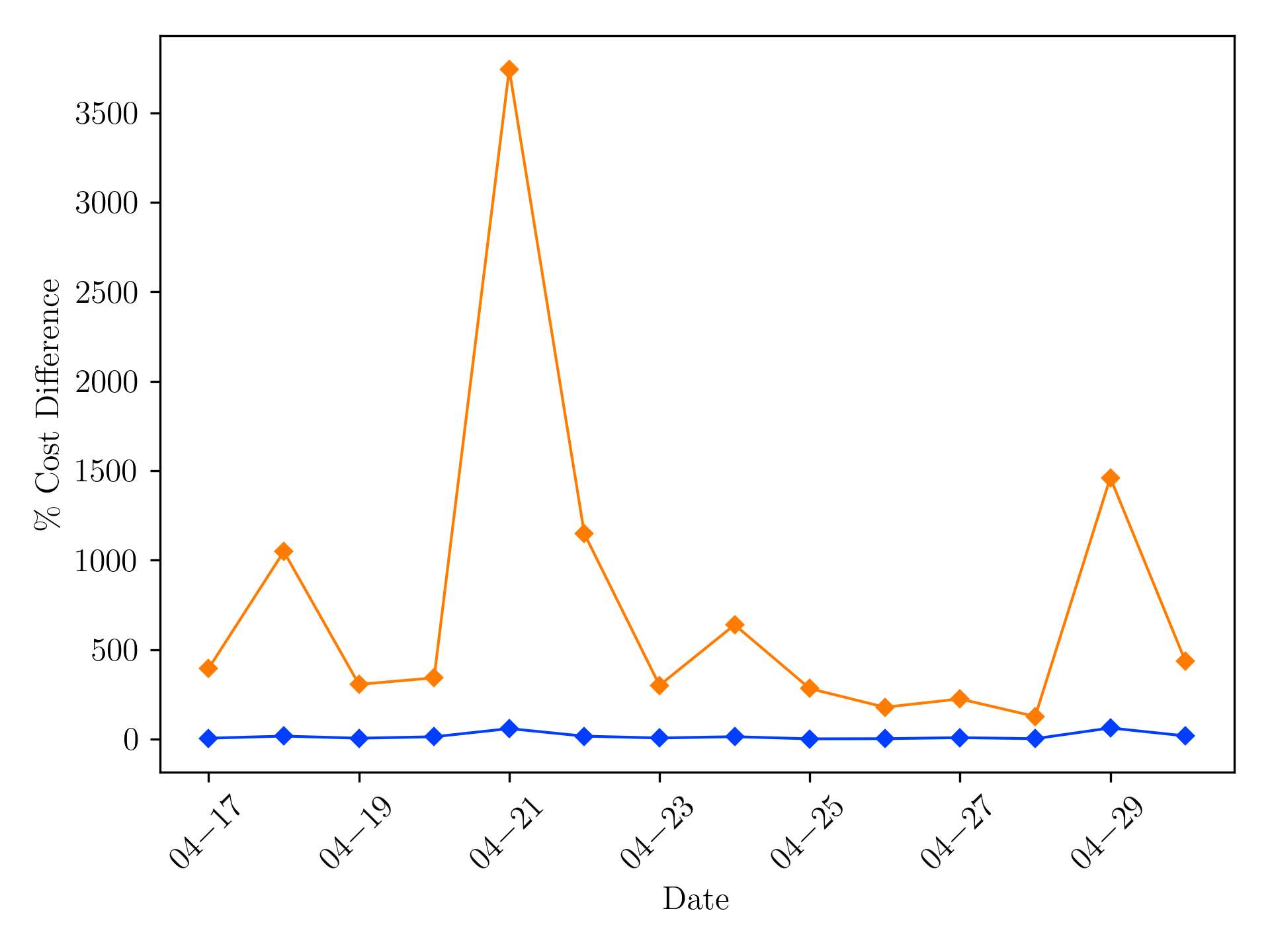}
        \caption{$\delta = 1$}
    \end{subfigure}
    \caption{Daily relative cost difference (in \%) for \Cref{alg:multi-stages} and \textsc{Smart-ME} versus \textsc{OFF}, for $\delta \in \{0.001,0.01,0.1,1\}$. The blue and orange curves respectively represent the performance of \Cref{alg:multi-stages} and \textsc{Smart-ME} for each instance.}
    \label{fig:daily-cost-diff}
\end{figure}

\begin{figure}
  \centering
  \includegraphics[width=0.5\linewidth]{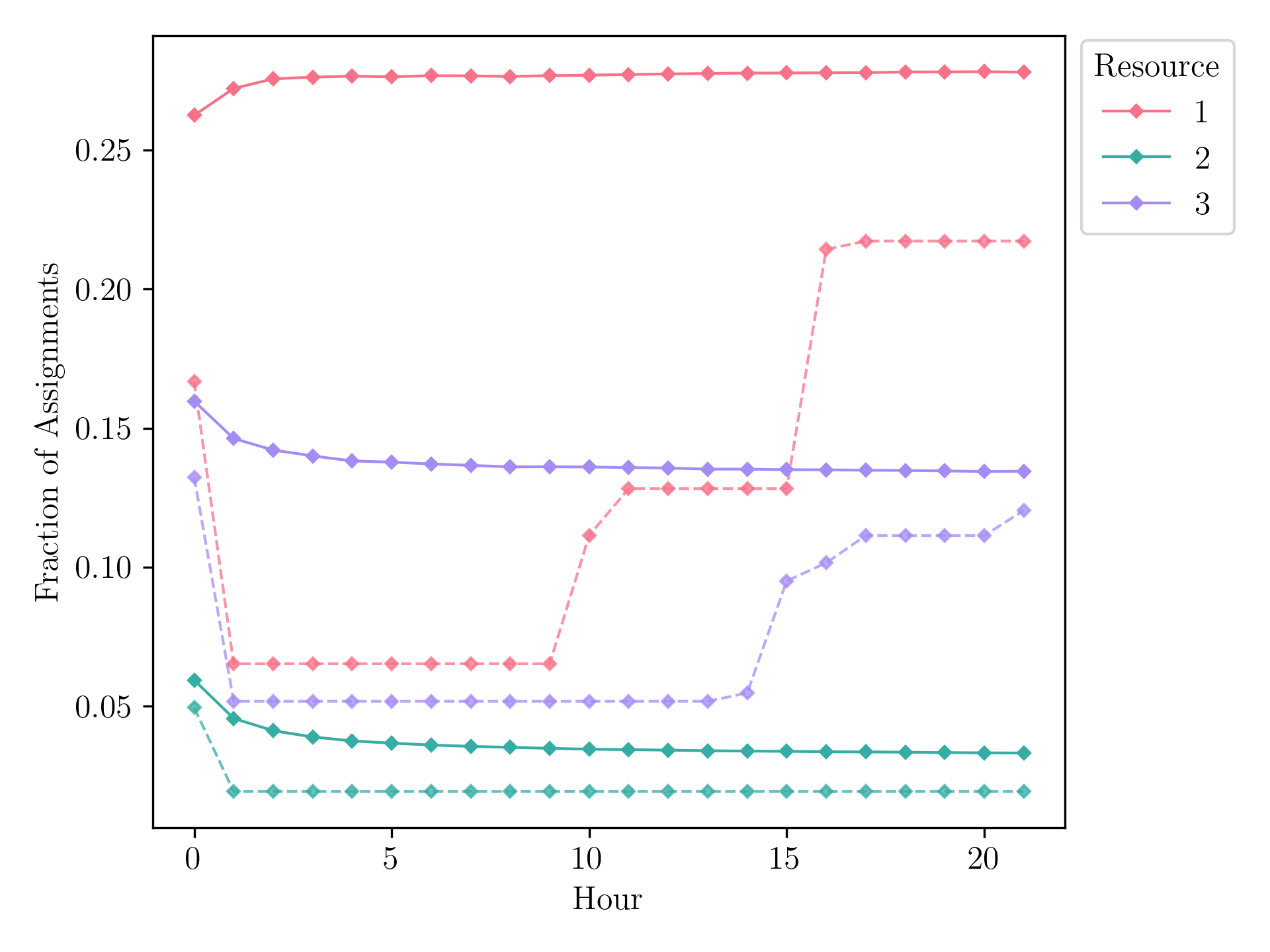}
  \caption{Running average assignment across resources under \textsc{Smart-ME} throughout a single day in the dataset (the bad instance), for $\delta = 1$. Dotted lines correspond to hourly targets for each resource.
  }
  \label{fig:real-bad-instance}
\end{figure}

	\section{Conclusion}\label{sec:conclusion}

We introduced and studied the problem of {throughput-constrained} online resource allocation, which is the natural {throughput} capacity analogue of classical online resource allocation problems \citep{balseiro2021regularized}. Our model applies to a wide range of {contexts (e.g., manual or automated processing such as picking, packing, sorting, loading, assembling and conveyance/transportation), where the trajectory of consumption is first-order}.
We developed a novel sublinear regret online algorithm that overcomes the challenges of multi-epoch target-following using proxy assignment decisions and demonstrated its practical performance via extensive experiments on both synthetic and real-world datasets.

Our work opens up several future directions. We conjecture that the dependence of the regret of our algorithm on the number of epochs $K$ is suboptimal; it would be interesting to derive matching upper and lower bounds with respect to this model primitive. Additionally, {while our algorithm retains its strong guarantees under nonstationary aggregate arrival rates, these guarantees rely on the underlying {\it type} distribution being i.i.d. throughout the horizon.} 
Understanding whether proxy assignment decisions remain useful when arrivals types are nonstationary (e.g., for ergodic processes) or adversarially determined would be of both theoretical and practical interest. Indeed, to the best of our knowledge the idea of proxy assignment decisions in online decision-making is novel and has potential applications in other settings. For example, the transportation arcs of a network typically have different transfer times between them due to difference in geographical distance. This means the time at which a decision-maker assigns an arrival to a resource and when the arrival actually arrives at the resource varies, creating a challenge for online allocation similar to that in our problem. Based on our work, a similar proxy allocation approach that uses current arrivals to predict the decisions made for future assignments was used to build a transfer time aware online load balancing controller at Amazon (\cite{rudden2024}). Lastly, our work considers settings where the decision-maker is necessarily penalized for target deviation. While this is true in many practical settings, a natural question to ask is what happens when the high-level plans that generate the targets change during the assignment process. It may then be more appropriate to treat the targets as ``advice'' provided to the decision-maker, who must then design algorithms that perform well when the advice is good, but are robust to errors in this advice when it is poor.


    \medskip 

    \noindent\textbf{Acknowledgments:} {We thank Daan Rutten for help with providing data and context information for the computational experiments. We moreover thank Santiago Balseiro, Haihao Lu, and Jeannette Song for insightful conversations on our work. Finally, we acknowledge the participants of the Durham Early Career Scholars Workshop and the Banff Workshop on Dynamic Matching and Allocation for useful feedback on a preliminary version of this work.}

\newpage

\bibliographystyle{informs2014} 
\bibliography{references} 


%
%
%

\newpage

\crefalias{section}{appendix}
\begin{APPENDICES}
\OneAndAHalfSpacedXI
    \section{Dual Mirror Descent for Single-Epoch Target Following}\label{apx:balseiro-et-al}

We present the dual mirror descent algorithm developed in \citet{balseiro2021regularized} for the single-epoch problem below. For ease of presentation, we assume the reference function used for the mirror descent step is the $\ell_2$-norm, yielding online gradient descent in the dual space. 


\begin{algorithm}[!ht]
\DontPrintSemicolon 
\KwIn{Stepsize $\eta$, initial shadow prices $\mu^1\in\mathbb{R}^{m}$}
\KwOut{Sequence of assignment decisions}
\For{$t = 1, \ldots, T$}{
    Observe arrival type $j = j^t$, and make primal assignment decision:
    \begin{equation}\label{eq:balseiro-asg}
    \begin{aligned}
        x^t \in \arg\min_{x\in \mathcal{X}^t} \quad &\sum_{i \in [m]}x_i(c_{ji}-\mu_i^t)
    \end{aligned}
    \end{equation}\;

    Compute idealized average consumption:
    \begin{equation}\label{eq:balseiro-dev}
    \begin{aligned}
        a^t \in \arg\min_{a \in [0,1]^{m}} \quad &\sum_{i\in[m]}g_{1i}\left(a_i\right) +\sum_{i\in[m]}\mu^t_{i}a_{i} 
    \end{aligned}
    \end{equation}\;
    Update shadow prices via OGD step:
    \begin{align}\label{eq:balseiro-ogd}
    \mu^{t+1} = \mu^t + \eta(a^{t}-x^{t}).
    \end{align}
        }
	\caption{Dual Gradient Descent for Single-Epoch Target Following}
	\label{alg:single-epoch}
\end{algorithm}

    {
\section{Extension to Nonstationary Arrival Rates}\label{apx:nonstationary}

Motivated by the reality that arrival rates may vary throughout the decision-making horizon in a number of applications, in this section we relax the assumption that the number of arrivals is equal across all epochs. Specifically, we assume that there are $l_kT$ arrivals in each epoch $k \in [K]$, for some known vector \mbox{$l \in (0,1)^K$} such that $\sum_{k\in[K]}l_k = 1$. For simplicity, we assume $l_kT$ is integral for all $k$. Observe that this setting subsumes the basic setting presented in \Cref{sec:model}, in which $l_k = 1/K$ for all $k \in [K]$. This generalization is also able to model ``peak'' and ``non-peak'' periods throughout the horizon (e.g., $K = 3$, with $l_1 = 1/4$, $l_2 = 1/2$, $l_3 = 1/4$). 

We consider two possible models for the cumulative targets: (i) {\it per-arrival} targets, which pace the average assignment over all arrivals by the end of each epoch, and (ii) {\it per-period} targets, which pace the average assignment per unit time. In the example above, for instance, per-arrival targets would average the number of assignments over the $T/4$ arrivals in the first epoch; on the other hand, per-period targets would average the number of assignments over $T/3$ periods, assuming for simplicity that all epochs have equal ``time-length.''

In the following two sections, we demonstrate a reduction of each of these settings to our basic setup. For ease of notation, we let $l_{\leq k} = \sum_{k'\leq k}l_{k'}$. Moreover, for any policy $\pi$, we let $Z_{ji}^\pi(t)$ denote the number of type $j$ arrivals assigned to resource $i$ after the $t$-th arrival; as before, we let $Z_i^\pi(t) = \sum_{j\in[n]}Z_{ji}^\pi(t)$.

\subsection{Per-Arrival Targets}\label{apx:per-arrival}

At a high level, we reduce any nonstationary instance to a problem over a {stationary} instance by partitioning the larger epochs in the original instance in such a way that the final constructed instance is composed of epochs of equal length. Defining the deviation cost functions associated with these new ``sub-epochs'' to be identically zero, we observe that these two instances are in fact equivalent. Therefore, applying our algorithm to the constructed stationary instance immediately provides a sublinear regret guarantee for the original instance. We formalize these ideas below.

In this case, the total cost incurred by any policy $\pi$ over a sample path of arrivals $\omega$ is given by:
\begin{align}\label{eq:nonstat-cost-per-arrival}
V^\pi[\omega] = \sum_{j\in[n]}\sum_{i\in[m]}c_{ji}Z_{ji}^\pi(T) + \sum_{k\in[K]}\sum_{i\in[m]}l_{\leq k}Tg_{ki}\left(\frac{Z_i^\pi(l_{\leq k}T)}{l_{\leq k}T}\right).
\end{align}

Let $\mathcal{I}$ denote the instance defined by the $K$ epochs with unequal arrival rates and deviation cost functions \mbox{$(g_{ki}, k \in [K], i \in [m])$}. We construct an equivalent stationary instance, denoted by $\mathcal{I}_{new}$, as follows. Let $l_{new}T$ be the greatest common divisor of $(l_1T,l_2T,\ldots,l_KT)$. For simplicity we assume $l_{new}T = \Theta(T)$, and that $1/l_{new}$ is integral. Then, define $\mathcal{I}_{new}$ to be an instance with $K_{new} = 1/l_{new}$ epochs, and $l_{new}T$ arrivals per epoch. 

\smallskip 

\begin{example}\label{ex:inst-1}
Consider any instance $\mathcal{I}$ with $K = 3$ and $l_1 = 1/4$, $l_2 = 1/2$, $l_3 = 1/4$. Then, $K_{new} = 4$, with \mbox{$l_{new} = 1/4$}.
\end{example}

\smallskip

Let \mbox{$(\widetilde{g}_{ki}, k \in [K_{new}], i \in [m])$} denote the set of deviation cost functions for this new instance. To define this set of functions, we let $f: [K] \mapsto [K_{new}]$ be the mapping from the set of epochs in the original instance to the set of epochs in the new instance, such that, for all $k \in [K], k_{new} \in [K_{new}]$, $f(k) = k_{new}$ if and only if $l_{\leq k} T = k_{new}l_{new}T$. Equivalently, we have $f(k) = \frac{l_{\leq k}}{l_{new}}$ for all $k \in [K]$. In words, $k_{new}$ is such that the number of arrivals by the end of the $k$-th epoch in the original instance is equal to the number of arrivals by the end of the $k_{new}$-th instance.  Let $\mathcal{K}_{new} = \left\{f(k), k \in [K]\right\}$. Then, for all $i \in [m]$, we define:
\[\widetilde{g}_{ki}(a) = \begin{cases}
g_{f^{-1}(k),i}(a) \quad &\forall \ k \in \mathcal{K}_{new}\\
0 &\text{otherwise}.
\end{cases}\]
In words, $\mathcal{I}_{new}$ is such that there is no target consumption rate for the arrivals that do not correspond to the end of an epoch in the original instance; for all other epochs, the target is the same.

\smallskip 

\begin{example}[Continued from \Cref{ex:inst-1}]\label{ex:inst-2}
In this case, $f(1) = 1$, $f(2) = 3$ and $f(3) = 4$, with \mbox{$\mathcal{K}_{new} = \{1,3,4\}$}. Moreover, $\widetilde{g}_{1,i}(a) = g_{1,i}(a)$, $\widetilde{g}_{2,i}(a) = 0$, $\widetilde{g}_{3,i}(a) = g_{2,i}(a)$, and $\widetilde{g}_{4,i}(a) = g_{3,i}(a)$.
\end{example}

\smallskip

The following proposition establishes that the problem under $\mathcal{I}$ reduces to that under $\mathcal{I}_{new}$. We defer the proof of \Cref{prop:reduc-1} to Appendix \ref{apx:reduc-1}.

\begin{proposition}\label{prop:reduc-1}
Fix a sample path of arrivals $\omega$ and a sequence of assignment decisions, denoted by $Z^\pi$. Let $V^\pi[\omega]$ and $V^\pi_{new}[\omega]$ respectively denote the total cost incurred by $Z^\pi$ in $\mathcal{I}$ and $\mathcal{I}_{new}$. Then, $V^\pi[\omega] = V^{\pi}_{new}[\omega]$.
\end{proposition}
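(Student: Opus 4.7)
The plan is to verify the claim by a direct calculation comparing the two cost expressions term by term. Both instances use exactly the same sample path $\omega$ and the same assignment trajectory $Z^\pi$, so the only thing to check is that the accounting of deviation costs matches.

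First I would write the two total costs side by side. In $\mathcal{I}$ we have
\[
V^\pi[\omega] = \sum_{j,i}c_{ji}Z_{ji}^\pi(T) + \sum_{k\in[K]}\sum_{i\in[m]} l_{\leq k}T\, g_{ki}\!\left(\frac{Z_i^\pi(l_{\leq k}T)}{l_{\leq k}T}\right),
\]
and in $\mathcal{I}_{new}$, which has $K_{new}=1/l_{new}$ equal-length epochs of $l_{new}T$ arrivals apiece, the cost formula from \eqref{eq:alg-cost} specializes to
\[
V_{new}^\pi[\omega] = \sum_{j,i}c_{ji}Z_{ji}^\pi(T) + l_{new}T\sum_{k_{new}\in[K_{new}]}\sum_{i\in[m]} k_{new}\,\widetilde{g}_{k_{new},i}\!\left(\frac{Z_i^\pi(k_{new}l_{new}T)}{k_{new}l_{new}T}\right).
\]
The assignment-cost terms are identical because both depend only on $Z_{ji}^\pi(T)$, which is determined by $Z^\pi$ itself. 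So the proof reduces to showing equality of the deviation-cost sums.

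Next I would partition the outer sum in $V_{new}^\pi[\omega]$ into $k_{new}\in\mathcal{K}_{new}$ and $k_{new}\notin\mathcal{K}_{new}$. By construction of $\widetilde{g}$, every term with $k_{new}\notin\mathcal{K}_{new}$ vanishes. For $k_{new}\in\mathcal{K}_{new}$, the map $f:[K]\to\mathcal{K}_{new}$ defined by $f(k)=l_{\leq k}/l_{new}$ is a bijection, so I can re-index the sum by $k\in[K]$ via $k_{new}=f(k)$. Two identities do the work: $k_{new}l_{new}T=f(k)l_{new}T=l_{\leq k}T$ (giving both the correct scaling factor and the correct argument to the deviation cost), and $\widetilde{g}_{f(k),i}=g_{f^{-1}(f(k)),i}=g_{k,i}$ by the definition of $\widetilde{g}$. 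Substituting, each retained term becomes $l_{\leq k}T\, g_{ki}(Z_i^\pi(l_{\leq k}T)/(l_{\leq k}T))$, matching the corresponding term in $V^\pi[\omega]$ exactly. Summing over $k\in[K]$ and adding back the assignment costs yields $V^\pi[\omega]=V_{new}^\pi[\omega]$.

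There is no real obstacle here — it is a straightforward bookkeeping argument. The only thing to be careful about is keeping the scaling constant $k_{new}l_{new}T$ (which is the analogue of $kT/K$ in the basic setup) aligned with the running-average horizon $l_{\leq k}T$ in $\mathcal{I}$, and verifying that $f$ is well-defined as a bijection between $[K]$ and $\mathcal{K}_{new}$ (which follows immediately from $l_{new}$ being the greatest common divisor of $(l_1T,\ldots,l_KT)/T$ and the monotonicity of the partial sums $l_{\leq k}$).
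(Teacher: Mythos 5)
Your proof is correct and follows essentially the same route as the paper's: both drop the vanishing terms for $k\notin\mathcal{K}_{new}$, then re-index the surviving terms through the bijection $f$ using the identities $k_{new}l_{new}T=l_{\leq k}T$ and $\widetilde{g}_{f(k),i}=g_{ki}$ to recover the deviation-cost sum of $\mathcal{I}$. No issues.
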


Consider now the algorithm which, given nonstationary instance $\mathcal{I}$, constructs stationary instance $\mathcal{I}_{new}$, and runs \Cref{alg:multi-stages} on $\mathcal{I}_{new}$, implementing the same assignment decisions on the true instance, for all $t \in [T]$. We refer to this algorithm as \nonstationaryalg. We have the following result, which is an immediate corollary of \Cref{prop:reduc-1}.

\begin{corollary}\label{cor:reduc-1}
For $\eta = \Theta\left(\sqrt{K_{new}/T}\right)$, the gap between the expected cost incurred by \nonstationaryalg\ and the offline benchmark is upper bounded by
$\mathbb{E}[\textsc{Reg}] = O(K_{new}^{5/2}\sqrt{T}).$
\end{corollary}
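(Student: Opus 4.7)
The plan is to observe that Corollary \ref{cor:reduc-1} is essentially a direct composition of Proposition \ref{prop:reduc-1} with Theorem \ref{thm:regret}, so the proof is short and the main work is to verify that the constructed stationary instance $\mathcal{I}_{new}$ satisfies all hypotheses of Theorem \ref{thm:regret}.

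First, I would apply Proposition \ref{prop:reduc-1} to deduce equality of costs for \emph{every} sequence of assignment decisions. In particular, this equality holds for (a) the decisions produced by \nonstationaryalg\ on $\mathcal{I}$, which by construction are identical to those produced by Algorithm \ref{alg:multi-stages} on $\mathcal{I}_{new}$, and (b) any feasible offline solution. Since the feasible sets for the offline optimization in $\mathcal{I}$ and $\mathcal{I}_{new}$ coincide (both are determined by the same arrival sequence, the same type-resource compatibility sets $\mathcal{S}_j$, and the same nonnegativity/integrality constraints), taking the minimum over this common feasible set yields $V^{\textsc{off}}[\omega] = V^{\textsc{off}}_{new}[\omega]$. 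Combining these two observations gives
\[
\textsc{Reg}[\omega] \;=\; V^\pi[\omega] - V^{\textsc{off}}[\omega] \;=\; V^\pi_{new}[\omega] - V^{\textsc{off}}_{new}[\omega],
\]
so the expected regret of \nonstationaryalg\ on $\mathcal{I}$ equals the expected regret of Algorithm \ref{alg:multi-stages} on $\mathcal{I}_{new}$.

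Second, I would verify that Theorem \ref{thm:regret} applies to $\mathcal{I}_{new}$. The arrival process is unchanged, so it is still i.i.d.\ with probabilities $p_j$ independent of $T$. By construction $\mathcal{I}_{new}$ has $K_{new} = 1/l_{new} = \Theta(1)$ epochs, each of equal length $l_{new}T = T/K_{new}$ arrivals. The deviation cost functions $\widetilde{g}_{ki}$ are either equal to an original $g_{k'i}$ (convex and $L$-Lipschitz with $\widetilde{g}_{ki}(\rho_{k'i}) = 0$) or identically zero (trivially convex, $0$-Lipschitz, and attaining value zero at an arbitrary target). Hence all hypotheses of Theorem \ref{thm:regret} are met for the instance $\mathcal{I}_{new}$ with $K_{new}$ epochs.

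Finally, I would invoke Theorem \ref{thm:regret} with $K \leftarrow K_{new}$ and $\eta = \Theta(\sqrt{K_{new}/T})$, yielding
\[
\mathbb{E}\left[V^\pi_{new}[\omega] - V^{\textsc{off}}_{new}[\omega]\right] = O\!\left(K_{new}^{5/2}\sqrt{T}\right),
\]
which by the equality of regrets established above is exactly the desired bound. There is no real obstacle here; the only step requiring a moment of care is confirming that the offline feasible sets in $\mathcal{I}$ and $\mathcal{I}_{new}$ genuinely coincide, which holds because adding extra ``phantom'' epoch boundaries in $\mathcal{I}_{new}$ does not introduce new constraints beyond those already present in $\mathcal{I}$ (the running total up to any sub-epoch endpoint is automatically well-defined from the original arrival sequence), and because the zero deviation costs at the phantom boundaries contribute nothing to the objective.
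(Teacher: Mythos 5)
Your proposal is correct and matches the paper's intent exactly: the paper states \Cref{cor:reduc-1} as an immediate consequence of \Cref{prop:reduc-1} combined with \Cref{thm:regret}, which is precisely the composition you spell out (cost equality on every sample path implies equality of both the algorithm's cost and the offline optimum, hence of the regret, and then \Cref{thm:regret} applies to $\mathcal{I}_{new}$ with $K \leftarrow K_{new} = \Theta(1)$). Your added verification that the zero deviation costs at the phantom boundaries are trivially convex and Lipschitz, and that the offline problems agree, is exactly the routine checking the paper leaves implicit.
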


\subsubsection{Proof of \Cref{prop:reduc-1}}\label{apx:reduc-1}
\begin{proof}{\it Proof.}
The total cost incurred by $Z^\pi$ in the constructed instance is given by:
\begin{align*}
V_{new}^\pi[\omega] &= \sum_{j\in[n]}\sum_{i\in[m]}c_{ji}Z_{ji}^\pi(T) + {\sum_{k \in [K_{new}]}\sum_{i\in[m]}k l_{new} T \widetilde{g}_{ki}\left(\frac{Z_i^\pi(kl_{new}T)}{kl_{new}T}\right)}\\
&= \sum_{j\in[n]}\sum_{i\in[m]}c_{ji}Z_{ji}^\pi(T) + \underbrace{\sum_{k\in\mathcal{K}_{new}}\sum_{i\in[m]} kl_{new}T\widetilde{g}_{ki}\left(\frac{Z_i^\pi(kl_{new}T)}{kl_{new}T}\right)}_{(I)},
\end{align*}
where the second equality follows from the fact that  $\widetilde{g}_{ki}(\cdot) = 0$ for all $k \not\in\mathcal{K}_{new}$. Recall, for all $k \in \mathcal{K}_{new}$, by construction we have:
\begin{align*}
\begin{cases}
l_{\leq f^{-1}(k)}T = kl_{new}T\\
\widetilde{g}_{ki}(\cdot) = g_{f^{-1}(k),i}(\cdot).
\end{cases}
\end{align*}
Hence, we can re-write $(I)$ in the space of epochs of the original instance, i.e.,
\begin{align*}
(I) = \sum_{k\in[K]}\sum_{i\in[m]}l_{\leq k} Tg_{ki}\left(\frac{Z_i^\pi(l_{\leq k}T)}{l_{\leq k}T}\right).
\end{align*}
Plugging this back into $V_{new}^\pi[\omega]$ and comparing this to \eqref{eq:nonstat-cost-per-arrival}, we obtain the result.
\hfill\Halmos
\end{proof}

\subsection{Per-Period Targets}

Consider now the setting in which targets (and consequently deviation costs) are specified per-period; for simplicity we assume that each epoch has ``time-length'' $T/K$ in this setting. In this case, we trivially reduce to the nonstationary setting with {\it per-arrival} targets by appropriately re-scaling the deviation cost functions. In particular, with per-period targets, the total cost incurred by any policy $\pi$ over a sample path of arrivals $\omega$ is given by:
\begin{align}\label{eq:nonstat-cost-per-period}
V^\pi[\omega] = \sum_{j\in[n]}\sum_{i\in[m]}c_{ji}Z_{ji}^\pi(T) + \sum_{k\in[K]}\sum_{i\in[m]}\frac{kT}{K}g_{ki}\left(\frac{Z_i^\pi(l_{\leq k}T)}{kT/K}\right).
\end{align}

\Cref{prop:reduc-2} below establishes that we can construct an equivalent instance in which targets are per-arrival, as in Appendix \ref{apx:per-arrival}. Namely, for $k \in [K], i\in[m]$, define deviation cost function \mbox{$\widetilde{g}_{ki}(a) = \frac{1}{l_{\leq k}}\cdot\frac{k}{K}g_{ki}\left(\frac{l_{\leq k}}{k/K}a\right)$}. Consider an instance $\mathcal{I}_{new}$ defined by the set of epochs $[K]$ and deviation cost functions $(\widetilde{g}_{ki}, k \in [K], i\in [m])$.  

\begin{proposition}\label{prop:reduc-2}
Fix a sample path of arrivals $\omega$ and a sequence of assignment decisions, denoted by $Z^\pi$. Then,
\[V^\pi[\omega] =  \sum_{j\in[n]}\sum_{i\in[m]}c_{ji}Z_{ji}^\pi(T) + \sum_{k\in[K]}\sum_{i\in[m]}l_{\leq k}T\widetilde{g}_{ki}\left(\frac{Z_i^\pi(l_{\leq k}T)}{l_{\leq k}T}\right).\]
\end{proposition}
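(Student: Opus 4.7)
The plan is to verify \Cref{prop:reduc-2} by direct algebraic computation, since it essentially asserts that the per-period formulation and the per-arrival formulation with the rescaled cost functions $\widetilde{g}_{ki}$ are the \emph{same} objective, not merely equivalent up to an additive constant or a bounded error. Concretely, I would fix an arbitrary sample path $\omega$ and assignment trajectory $Z^\pi$, note that the assignment cost term $\sum_{j,i} c_{ji} Z_{ji}^\pi(T)$ appears identically in both expressions, and then compare the deviation cost contributions term by term for each epoch $k \in [K]$ and each resource $i \in [m]$.

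For each such pair $(k,i)$, the per-arrival summand on the right-hand side is $l_{\leq k} T\,\widetilde{g}_{ki}\!\left(Z_i^\pi(l_{\leq k}T)/(l_{\leq k}T)\right)$. Substituting the definition $\widetilde{g}_{ki}(a) = \frac{1}{l_{\leq k}}\cdot\frac{k}{K}\,g_{ki}\!\left(\frac{l_{\leq k}}{k/K}a\right)$ with $a = Z_i^\pi(l_{\leq k}T)/(l_{\leq k}T)$, the prefactor $l_{\leq k}T$ cancels the $1/l_{\leq k}$ inside $\widetilde{g}_{ki}$, leaving $(kT/K)\,g_{ki}(\cdot)$; and the rescaling of the argument gives $\frac{l_{\leq k}}{k/K}\cdot\frac{Z_i^\pi(l_{\leq k}T)}{l_{\leq k}T} = \frac{Z_i^\pi(l_{\leq k}T)}{kT/K}$. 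Putting these together yields exactly $\frac{kT}{K}\,g_{ki}\!\left(Z_i^\pi(l_{\leq k}T)/(kT/K)\right)$, which is the per-period deviation cost term in \eqref{eq:nonstat-cost-per-period}.

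Summing the matched contributions over $k \in [K]$ and $i \in [m]$ and adding the common assignment cost term recovers the identity in the statement, completing the proof. There is no genuine obstacle here: the content of the result is simply that $\widetilde{g}_{ki}$ was defined as the unique rescaling of $g_{ki}$ that absorbs both (i) the change in the normalization of the average (dividing by $l_{\leq k}T$ rather than $kT/K$) and (ii) the change in the outer scaling factor (from $kT/K$ to $l_{\leq k}T$). The only thing to be careful about is keeping the two rescalings straight — one acts on the argument of $g_{ki}$ and the other on the multiplicative prefactor — but both follow from the same ratio $\frac{l_{\leq k}}{k/K}$, which is why a single definition of $\widetilde{g}_{ki}$ suffices. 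Once the identity is established, one obtains, as in Appendix~\ref{apx:per-arrival}, a corresponding corollary: running \nonstationaryalg\ on the per-arrival instance defined by $(\widetilde{g}_{ki})$ inherits the $O(K_{new}^{5/2}\sqrt{T})$ regret guarantee for the original per-period problem.
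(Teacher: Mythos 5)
Your proposal is correct: the direct substitution of the definition of $\widetilde{g}_{ki}$ into the per-arrival deviation term, cancelling the $1/l_{\leq k}$ prefactor and rescaling the argument to recover $\frac{kT}{K}\,g_{ki}\left(Z_i^\pi(l_{\leq k}T)/(kT/K)\right)$, is exactly the computation the paper has in mind when it declares the proof straightforward and omits it. Nothing is missing, and your closing remark about the resulting corollary matches the paper's subsequent statement.
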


We omit the straightforward proof of this fact. The following corollary emerges as a direct result of \Cref{prop:reduc-2}.
\begin{corollary}
Consider the algorithm that takes in the original instance defined by per-period targets, converts it into a ``per-arrival'' instance, and runs \nonstationaryalg\ on the per-arrival instance. For $\eta = \Theta(\sqrt{K_{new}/T})$, this algorithm incurs $\mathbb{E}[\textsc{Reg}] = O(K_{new}^{5/2}\sqrt{T})$, where $K_{new}$ is defined as in Appendix \ref{apx:per-arrival}. 
\end{corollary}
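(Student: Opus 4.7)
The plan is to chain together the two reductions already established in this appendix. Proposition \ref{prop:reduc-2} tells us that the per-period-target instance $\mathcal{I}$ is equivalent, in terms of the total cost incurred by \emph{any} sequence of assignment decisions $Z^\pi$ on \emph{any} arrival sequence $\omega$, to the per-arrival-target instance $\mathcal{I}_{new}$ whose deviation cost functions are $\widetilde{g}_{ki}(a) = \frac{1}{l_{\leq k}}\cdot\frac{k}{K}g_{ki}\!\left(\frac{l_{\leq k}}{k/K}a\right)$. Since the equivalence holds pointwise on $\omega$, both the offline benchmark values and the algorithm's realized costs agree on $\mathcal{I}$ and $\mathcal{I}_{new}$; consequently, $\mathbb{E}[\textsc{Reg}]$ is identical on the two instances for the algorithm in question.

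Next, I would invoke Corollary \ref{cor:reduc-1} applied to $\mathcal{I}_{new}$: running \nonstationaryalg{} on the per-arrival instance $\mathcal{I}_{new}$ yields $\mathbb{E}[\textsc{Reg}] = O(K_{new}^{5/2}\sqrt{T})$ for the stepsize $\eta = \Theta(\sqrt{K_{new}/T})$. Composing with the equivalence from Proposition \ref{prop:reduc-2}, the same regret bound carries over to the original per-period-target instance $\mathcal{I}$, giving exactly the stated guarantee.

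Before concluding, I would make sure that $\widetilde{g}_{ki}$ satisfies the structural hypotheses that underlie Corollary \ref{cor:reduc-1} (inherited from \Cref{thm:regret}): nonnegativity, convexity, and Lipschitz continuity, with $\widetilde{g}_{ki}$ vanishing at the transformed target $\widetilde{\rho}_{ki} = (k/K)\rho_{ki}/l_{\leq k}$. Nonnegativity and convexity follow because $\widetilde{g}_{ki}$ is a positive affine rescaling of $g_{ki}$ in the argument composed with a positive scalar multiplication outside. The Lipschitz constant of $\widetilde{g}_{ki}$ is $\frac{1}{l_{\leq k}}\cdot\frac{k}{K}\cdot\frac{l_{\leq k}}{k/K}\cdot L = L$, so it is preserved exactly; and $\widetilde{g}_{ki}(\widetilde{\rho}_{ki}) = 0$ by construction. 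Because $K$, $K_{new}$, and each $l_{\leq k}$ are $\Theta(1)$ (independent of $T$), the constants absorbed in the $O(\cdot)$ remain bounded.

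The main (minor) obstacle is really just this bookkeeping check on the transformed deviation cost functions; the algorithmic and probabilistic heavy lifting has been completed in \Cref{thm:regret} and Corollary \ref{cor:reduc-1}. No new analysis of the primal-dual dynamics is needed, since the per-period setting is indistinguishable from a per-arrival setting through the lens of the cost functional of Proposition \ref{prop:reduc-2}.
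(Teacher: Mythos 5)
Your proposal is correct and matches the paper's intended argument exactly: the paper states this corollary as an immediate consequence of \Cref{prop:reduc-2} combined with \Cref{cor:reduc-1}, which is precisely the composition you carry out. Your additional bookkeeping check that the rescaled deviation costs $\widetilde{g}_{ki}$ remain convex, $L$-Lipschitz, and vanish at the transformed targets is a worthwhile detail that the paper leaves implicit.
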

}
    \section{Computational Experiments: Additional Details}\label{apx:exper}

\subsection{Benchmark Implementations}

\Cref{alg:benchmark} describes \textsc{Myopic-Epoch} and \textsc{Smart-Myopic-Epoch}, for a given epoch $k$. Given epoch $k$, these two algorithms assume the decision-making horizon is $\mathcal{T}_k = \{(k-1)T/K+1,\ldots,kT/K\}$, and take as input the epoch-specific targets $\tilde{\rho}_k$, respectively given in \eqref{eq:myopic} and \eqref{eq:smart-myopic}. They run \Cref{alg:single-epoch} over this shorter horizon, disregarding all future targets. 

\begin{algorithm}[!ht]
\DontPrintSemicolon 
\KwIn{Current epoch $k$, stepsize $\eta$, initial shadow prices $\mu^1\in\mathbb{R}^{m}$, target consumptions $\tilde{\rho}_k \in [0,1]^m$}
\KwOut{Sequence of assignment decisions}
\For{$t = (k-1)T/K+1, \ldots, kT/K$}{
    Observe arrival $j = j^t$, and make assignment decision:
    \begin{equation}
    \begin{aligned}
       x^t \in \arg\min_{x\in \mathcal{X}^t} \quad &\sum_{i \in [m]}x_i(c_{ji}-\dual{i}{t})
    \end{aligned}
    \end{equation}\;

    Compute idealized average consumption:
    \begin{equation}\label{eq:many-stages-aux-problem-bm}
    \begin{aligned}
        a^t \in \arg\min_{a \in [0,1]^{m}} \quad &\delta\sum_{i\in[m]}|a_i-\tilde{\rho}_{ki}| + \sum_{i\in[m]}\mu^t_{i}a_{i} 
    \end{aligned}
    \end{equation}\;
    Update shadow prices via OGD step:
    \begin{align}
    \mu^{t+1} = \mu^{t} + \eta(a^{t}-x^{t}).
    \end{align}
        }
	\caption{Myopic Benchmark Algorithms Implemented in \cref{sec:exper}}
	\label{alg:benchmark}
\end{algorithm}

\subsection{Description of Maximum Likelihood Estimation Procedure for Case-Breaking Dataset}\label{apx:mle}

Recall, we assume that the number of arriving shipments in each two-minute interval is drawn from a Poisson distribution with parameter $\lambda$, where $\lambda$ is the average number of shipments observed in the dataset for each two-minute interval. Then, under the soft-min approximation \eqref{eq:soft-min}, the ideal quantity $q_i^*(t)$ is drawn independently from a Poisson distribution with parameter $\lambda_i$, defined as: $$\lambda_i = \lambda\sum_{j\in[n]}p_j\frac{e^{-c_{ji}}}{1+\sum_{i'\in[m]}e^{-c_{ji'}}}.$$ 
We abuse notation and use $T$ to denote the number of two-minute intervals across the 14 days represented in the dataset. We moreover let $q^* = (q_i^*(t), \ \forall \ i \in [m], \forall \ t \in [T])$ denote the vector of all ideal quantities observed in the data, across all two-minute intervals and resources. The associated likelihood function is given by:
\begin{align}
&\mathcal{L}(q^*;p, c) = \prod_{t=1}^T\prod_{i=1}^m e^{-\lambda_i}\cdot \frac{\lambda_i^{q_i^*(t)}}{q_{i}^*(t)!} \notag \\
\implies &\log \mathcal{L}(q^*;p, c) = -\sum_{i=1}^m\lambda_iT + \sum_{t=1}^T \sum_{i=1}^m q_i^*(t)\log\lambda_i-\sum_{t=1}^T\sum_{i=1}^m\log \left(q_i^*(t)!\right), \label{eq:neg-log-like}
\end{align}
where we hide the explicit dependence of $\lambda_i$ on $(p,c)$ for ease of notation.  

We find the maximum likelihood estimates $(\hat{p}, \hat{c})$ by running projected gradient descent on the negative log-likelihood \eqref{eq:neg-log-like}, constraining $c_{ji} \in [-50,50]$ for all $j \in [n], i \in [m]$. We let the procedure run for $n_{\text{iter}} = 10^4$ iterations, with a stepsize $\eta = 0.2$. {Due to the non-convexity of the objective in $(p,c)$, we run the procedure for 1,000 random initial solutions, and choose the solution associated with the highest log-likelihood.} 

\subsection{Additional Results for Case-Breaking Dataset}\label{apx:clt2}

In this section we include results for the second warehouse in the dataset, for the same experimental setup as in \cref{sec:real}. 

As for the first warehouse, \cref{tab:real-res-total-cost-w2} demonstrates our policy's strong performance relative to the myopic target-following algorithm. The insights are similar: for negligible values of $\delta$, both policies are very close to the offline optimum, with performance deteriorating as $\delta$ increases. 

Compared to Warehouse 1 data, for $\delta \leq 0.1$, both policies perform better relative to the offline optimum. For $\delta = 1$, on the other hand, their performance is significantly worse (8 percentage point decrease in performance for our policy, and an over 5-fold decrease for the myopic policy). \Cref{fig:real-pareto-frontier-w2} helps to explain this, as compared to \Cref{fig:real-pareto-frontier}. For $\delta = 0.1$, the offline optimum is more biased toward assignment cost-minimizing solutions, incurring slightly higher average target deviation per period. Given that target-following is the significant challenge for any online algorithm, both policies are indeed expected to perform better in this case. For $\delta = 1$, however, we are back in a case where the optimal solution remains very close to the hourly target, on average. This is more challenging for both policies to achieve, as the assignment costs for Resources 1 and 3 are significantly lower, on average ($(c_{1}, c_3) = (-0.74, -0.001)$ for Warehouse 2, as opposed to $(c_1, c_3) = (-0.33, 0.21)$ for Warehouse 1).

\begin{table}
\centering
\begin{tabular}{|c|cc|}
\hline
$\delta$ & \Cref{alg:multi-stages} &  \textsc{Smart-ME} \\
\hline
0.001 & 0.009  & 0.007 \\
0.01 & 0.034 &  0.291 \\
0.1 & 0.841 & 49.989 \\
1 & 17.687 & 2727.329 \\
\hline
\end{tabular}
\caption{Relative cost difference (in \%) versus \textsc{OFF}, for $\delta \in \{0.001, 0.01, 0.1, 1\}$}
\label{tab:real-res-total-cost-w2}
\end{table}

\begin{figure}
  \centering
  \includegraphics[width=0.5\linewidth]{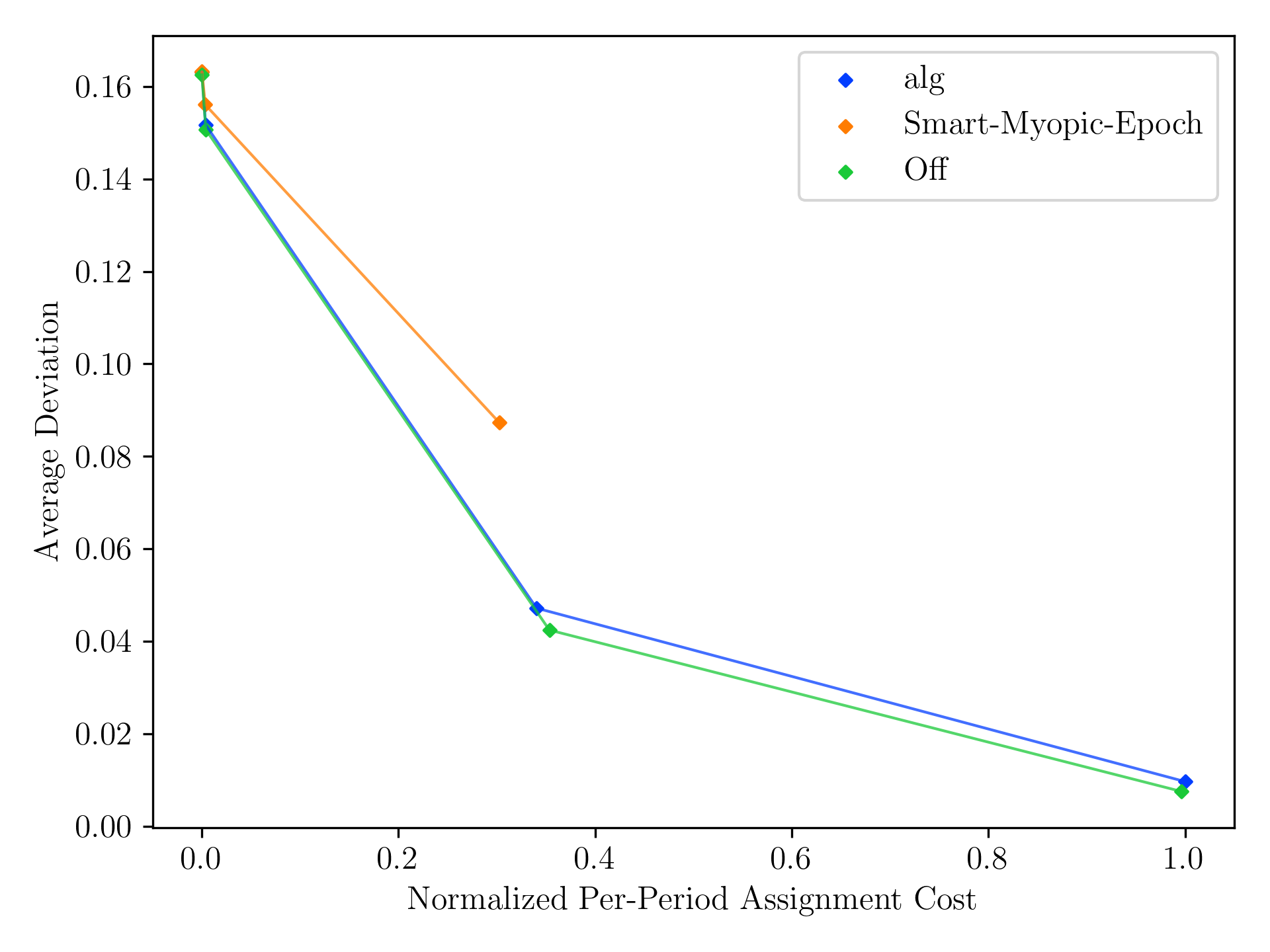}
  \caption{Mean absolute deviation from target versus normalized assignment cost per period. Dots from left to right correspond to deviation scalars $\delta = 0.001,0.01,0.1,1$, respectively.
  }
  \label{fig:real-pareto-frontier-w2}
\end{figure}

Finally, \Cref{fig:daily-cost-diff-w2} illustrates the strong performance of our policy across {\it all} instances for the majority of values of $\delta$, as for Warehouse 1. The variability in the myopic policy's performance is even more evident for this warehouse when $\delta = 1$, with the myopic policy incurring over 350 times the cost of the offline optimum on April 29.  \Cref{fig:real-bad-instance-w2}  shows the targets and running average assignments under the myopic policy throughout that day. Similar to the bad instance for Warehouse 1, in this instance the respective targets of Resources 1 and 3 are flipped, with the cheaper resource (Resource 1) having a lower target than the more expensive resource (Resource 3) for the majority of the horizon. In this case, the myopic policy fails to see the consequences of respecting this swap in future resources, and overshoots assignments to Resource 1 by over 15 percentage points for the majority of the horizon.

\begin{figure}[h]
    \centering
    \begin{subfigure}[b]{0.47\textwidth}
        \centering
        \includegraphics[width=\textwidth]{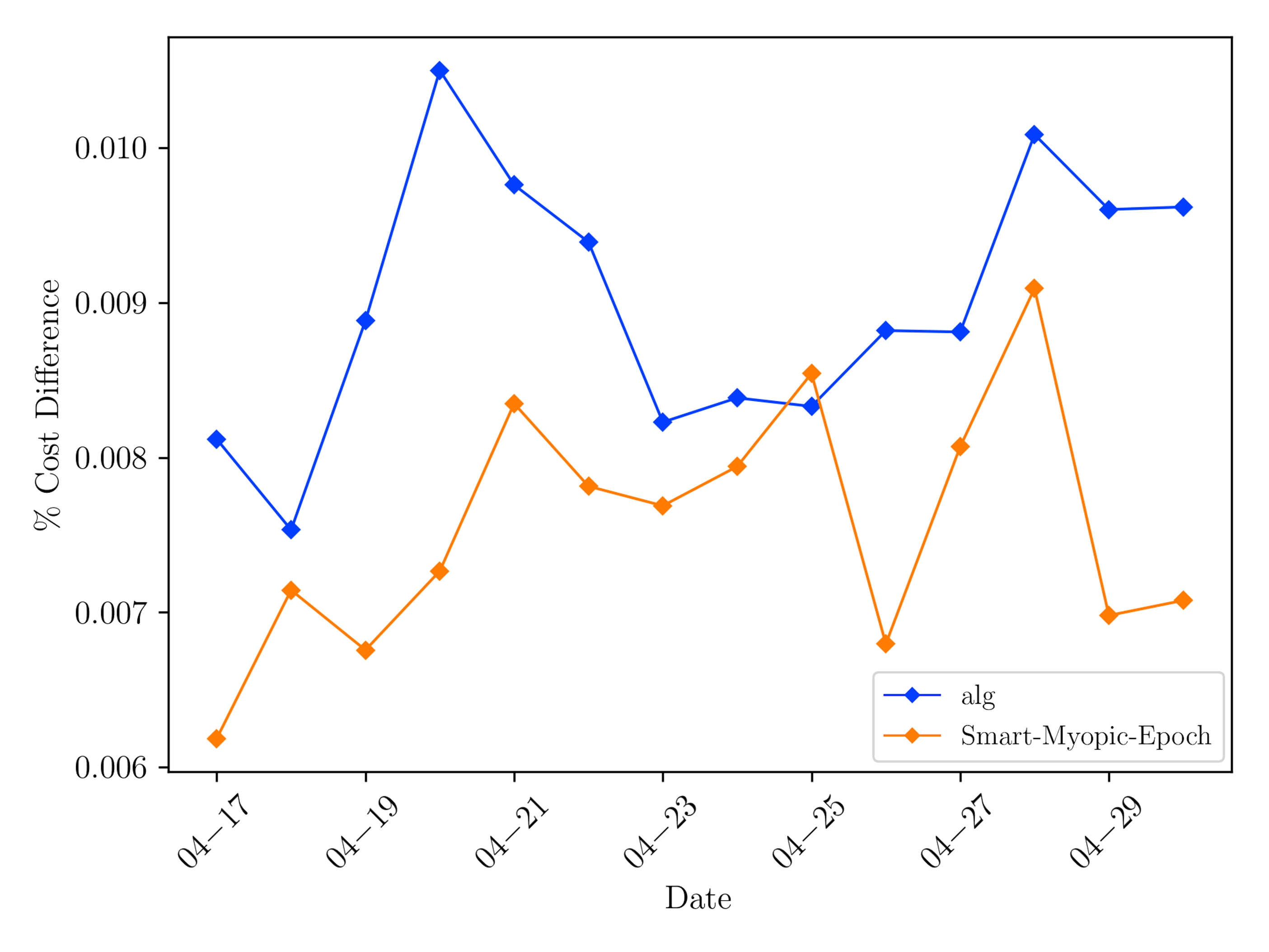}
        \caption{$\delta = 0.001$}
    \end{subfigure}
    \hfill
    \begin{subfigure}[b]{0.47\textwidth}
        \centering
        \includegraphics[width=\textwidth]{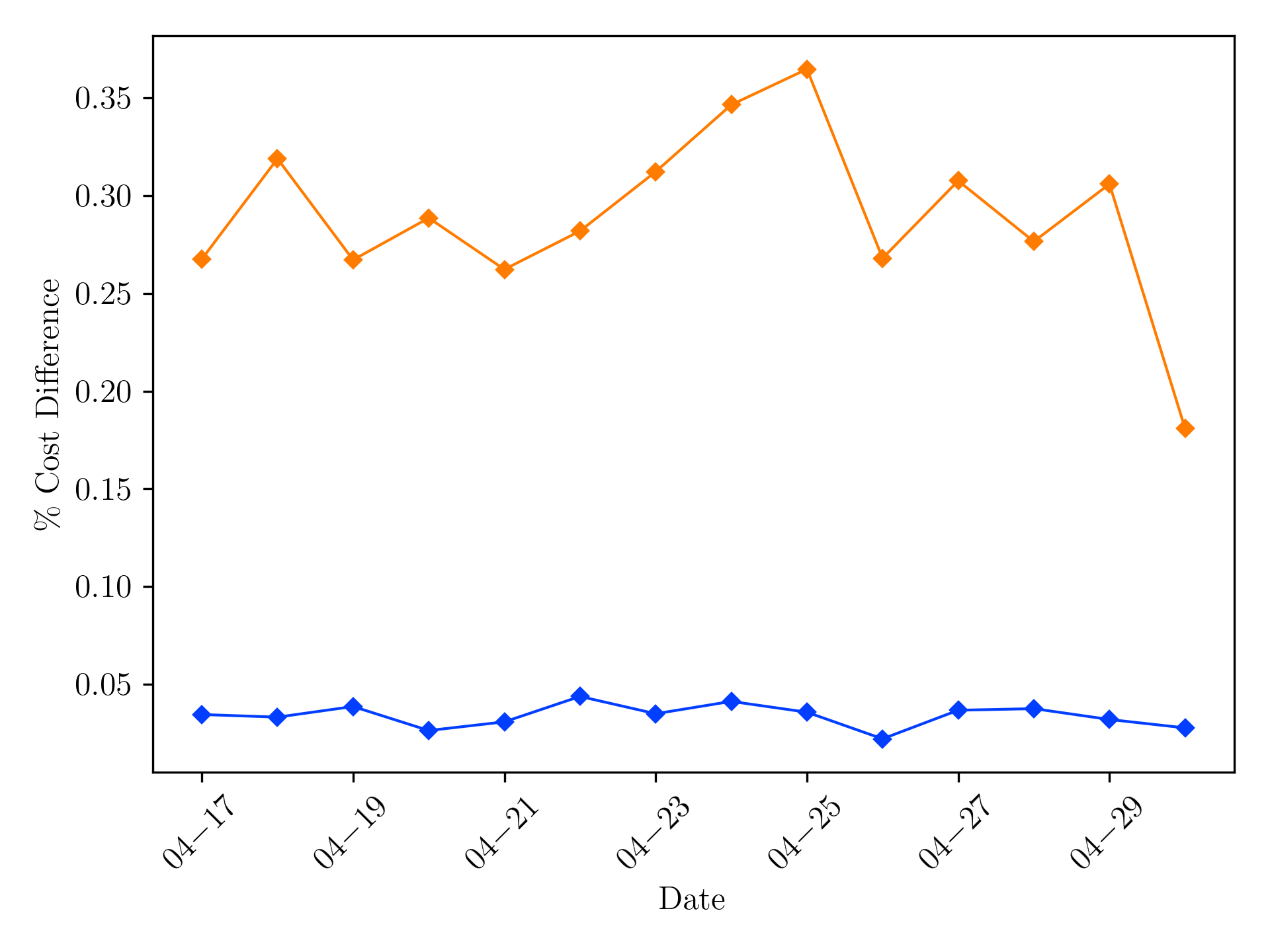}
        \caption{$\delta = 0.01$}
    \end{subfigure}
    \vfill
    \begin{subfigure}[b]{0.47\textwidth}
        \centering
        \includegraphics[width=\textwidth]{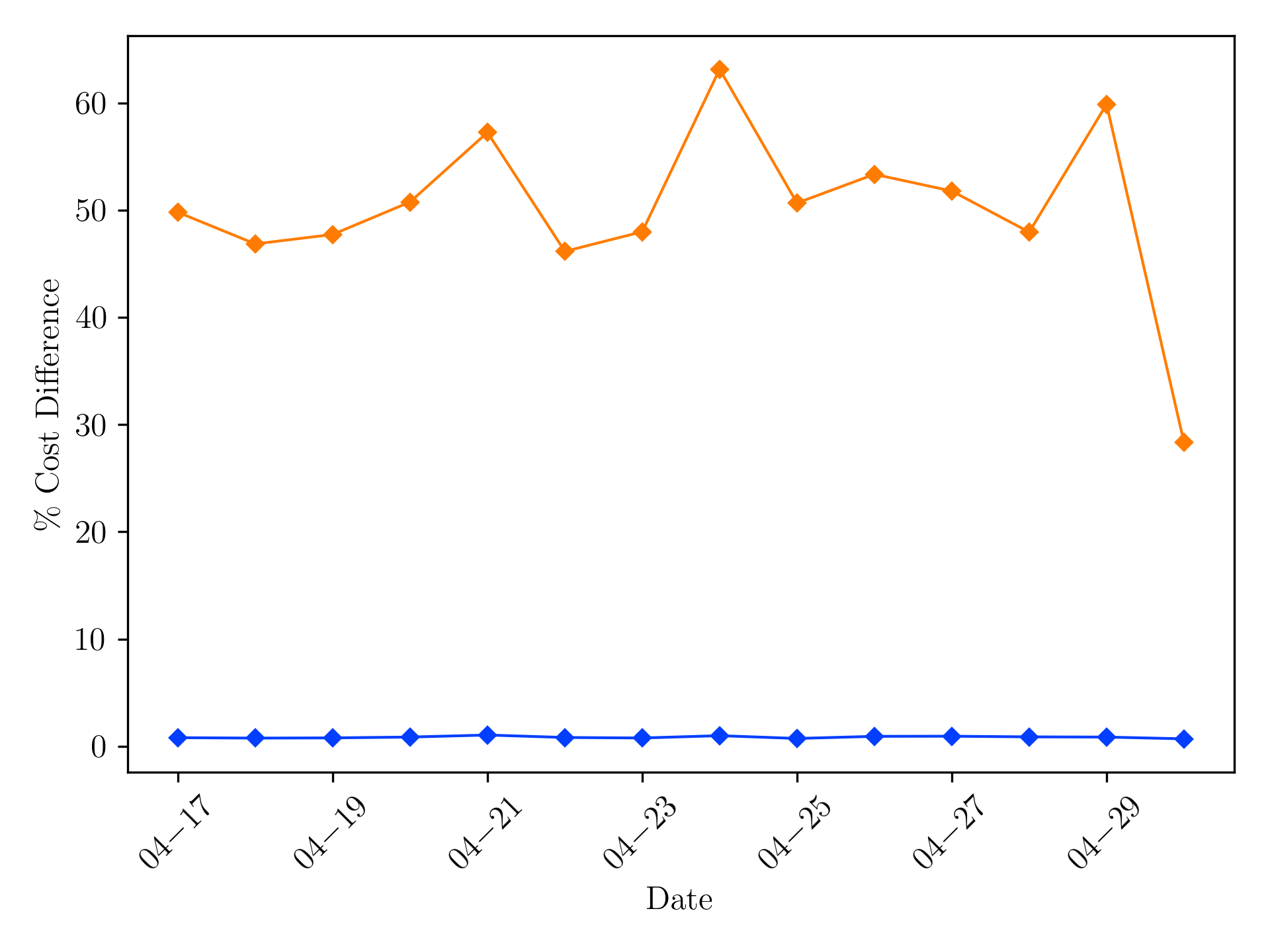}
        \caption{$\delta = 0.1$}
    \end{subfigure}
    \hfill
    \begin{subfigure}[b]{0.47\textwidth}
        \centering
        \includegraphics[width=\textwidth]{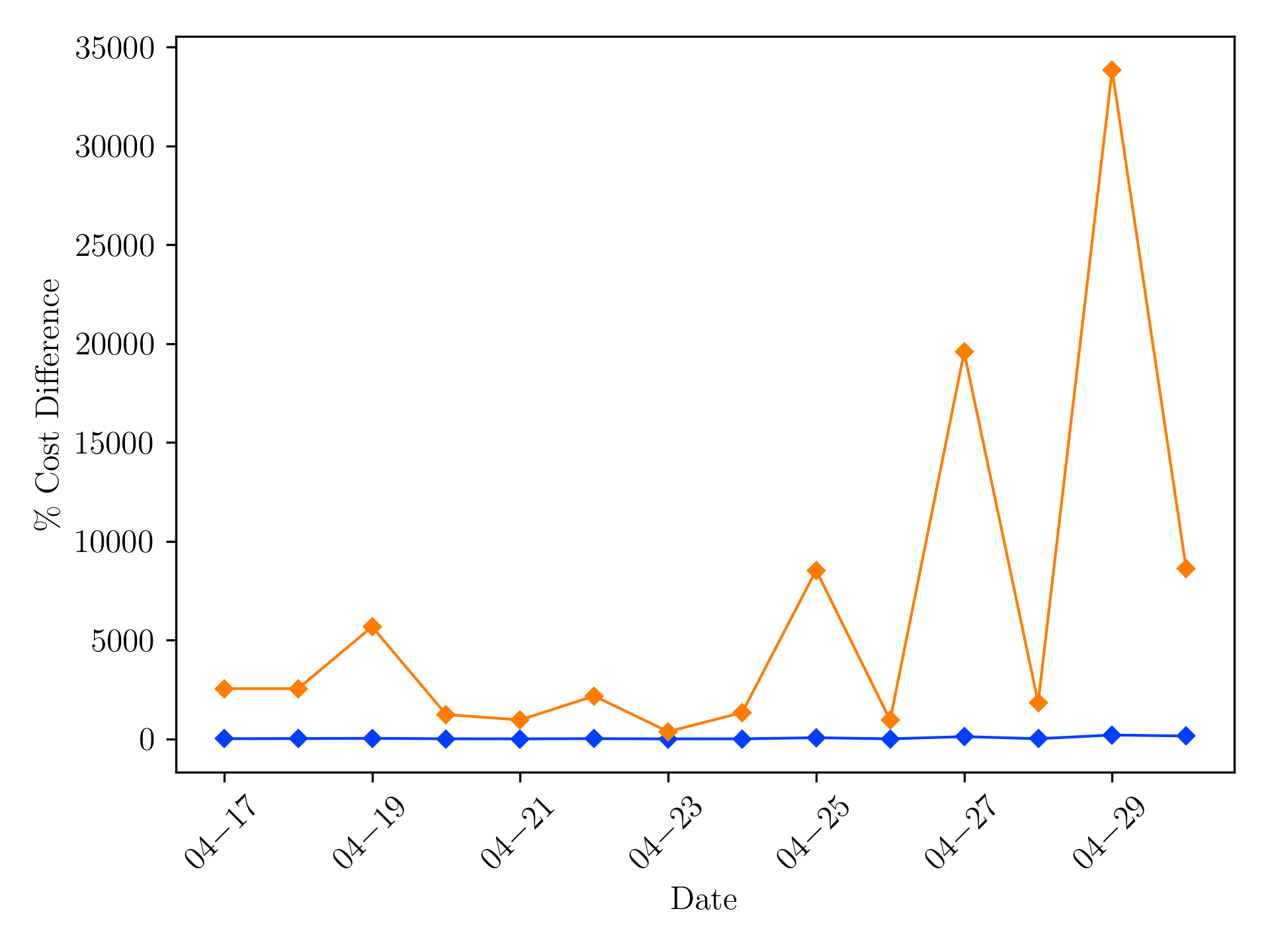}
        \caption{$\delta = 1$}
    \end{subfigure}
    \caption{Daily relative cost difference (in \%) versus \textsc{OFF}, for $\delta \in \{0.001,0.01,0.1,1\}$. The blue and orange curves respectively represent the performance of \Cref{alg:multi-stages} and \textsc{Smart-ME} for each instance.}
    \label{fig:daily-cost-diff-w2}
\end{figure}

\begin{figure}
  \centering
  \includegraphics[width=0.5\linewidth]{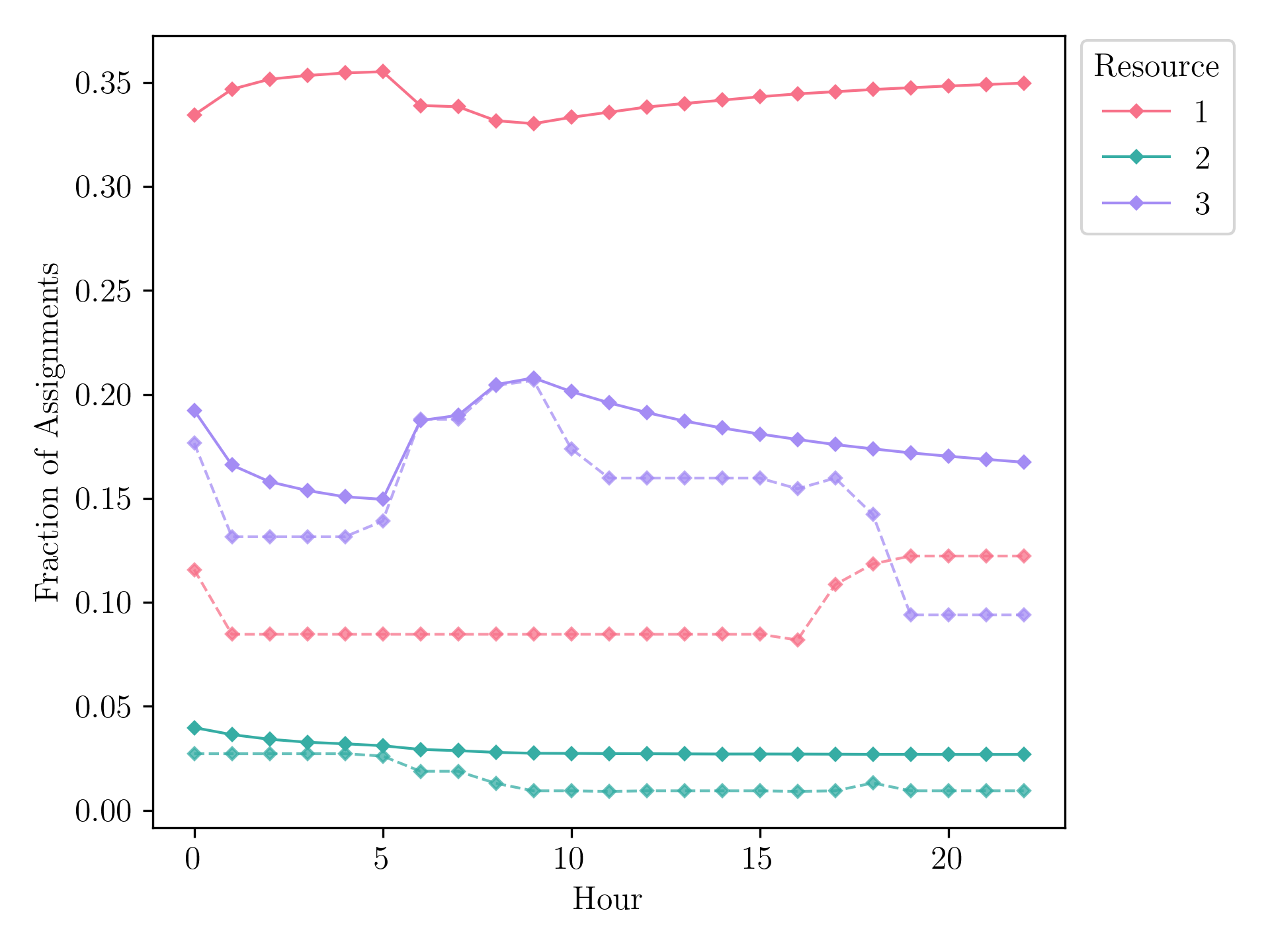}
  \caption{Running average assignment across resources under \textsc{Smart-ME} throughout a single day in the dataset (the bad instance), for $\delta = 1$. Dotted lines correspond to hourly targets for each resource.
  }
  \label{fig:real-bad-instance-w2}
\end{figure}

\end{APPENDICES}






\end{document}